\documentclass[11pt]{article}
\pdfoutput=1

\usepackage{graphicx}
\usepackage{amsmath}
\usepackage{amssymb,amsfonts}
\usepackage{amsthm}
\usepackage{cite}
\usepackage{bm}
\newtheorem{thm}{Theorem}[section]
\usepackage{mathrsfs}

\usepackage{amssymb}   
\usepackage{multirow}

\usepackage[margin=1in]{geometry}

\newcommand{\norm}[1]{\left\lVert#1\right\rVert}

\newtheorem{lemma}{Lemma}
\newtheorem{theorem}{Theorem}
\newtheorem{proposition}{Proposition}
\newtheorem{corollary}{Corollary}

\theoremstyle{definition}
\newtheorem{remark}{Remark}



\usepackage{hyperref}
\usepackage[nameinlink]{cleveref}

\usepackage{cite}

\begin{document}

\title{A Superconvergent Hybridizable Discontinuous Galerkin Method for Dirichlet Boundary Control of Elliptic PDEs}

\author{Weiwei Hu%
		\thanks{Department of Mathematics, Oklahoma State
		University, Stillwater, OK (weiwei.hu@okstate.edu). W.~Hu was supported in part by a postdoctoral fellowship for the annual program on Control Theory and its Applications at the Institute for Mathematics and its Applications (IMA) at the University of Minnesota.}%
	 \and
	 	Jiguang Shen%
	 	\thanks{School  of Mathematics, University of Minnesota, MN (shenx179@umn.edu)}%
	 \and
	 	John~R.~Singler%
	 	\thanks{Department of Mathematics
	 		and Statistics, Missouri University of Science and Technology,
	 		Rolla, MO (\mbox{singlerj@mst.edu}, ywzfg4@mst.edu). J.~Singler and Y.~Zhang were supported in part by National Science Foundation grant DMS-1217122.  J.~Singler and Y.~Zhang thank the IMA for funding research visits, during which some of this work was completed.}
	 \and
	 	Yangwen Zhang%
	 	\footnotemark[3]
	 \and
		Xiaobo Zheng%
		\thanks{College of Mathematics, Sichuan University,
			Chengdu, China (zhengxiaobosc@yahoo.com). X.~Zheng thanks Missouri University of Science and Technology for hosting him as a visiting scholar; some of this work was completed during his research visit.}
}

\maketitle

\begin{abstract}
	We begin an investigation of hybridizable discontinuous Galerkin (HDG) methods for approximating the solution of Dirichlet boundary control problems governed by elliptic PDEs.  These problems can involve atypical variational formulations, and often have solutions with low regularity on polyhedral domains.  These issues can provide challenges for numerical methods and the associated numerical analysis.  We propose an HDG method for a Dirichlet boundary control problem for the Poisson equation, and obtain optimal a priori error estimates for the control. Specifically, under certain assumptions, for a 2D convex polygonal domain we show the control converges at a superlinear rate. We present 2D and 3D numerical experiments to demonstrate our theoretical results.
%
%
\end{abstract}

\section{Introduction}
We consider the following elliptic Dirichlet boundary control problem on a Lipschitz polyhedral domain $\Omega\subset \mathbb{R}^{d} $, $ d\geq 2 $, with boundary $\Gamma = \partial \Omega:$
\begin{align}
\min J(u),  \quad  J(u)=\frac{1}{2}\|y-y_{d}\|^2_{L^{2}(\Omega)}+\frac{\gamma}{2}\|u\|^2_{L^{2}(\Gamma)}, \label{cost1}
\end{align}
where $ \gamma > 0 $ and $ y $ is the solution of the Poisson equation with nonhomogeneous Dirichlet boundary conditions
\begin{align}
-\Delta y &= f \quad  \text{in} \; \Omega \label{p1},\\
y &= u \quad  \text{on}\;  \Gamma. \label{p2}
\end{align}

It is well known that the Dirichlet boundary control problem \eqref{cost1}-\eqref{p2} is equivalent to the optimality system
\begin{subequations}\label{boundary_pro}
	\begin{align}
	-\Delta y &=f \quad  \quad  \quad \; \text{in} \; \Omega \label{boundary_pro_a},\\
	y&=u \quad \quad \quad\; \text{on}\;  \Gamma, \label{boundary_pro_u}\\
	-\Delta z  &=y-y_d \quad \; \text{in} \; \Omega \label{boundary_pro_b},\\
	z&=0 \quad \quad \quad\;\; \text{on}\;  \Gamma, \label{boundary_pro_c}\\
	u& =\gamma^{-1}\frac{\partial z}{\partial \bm{n}}\quad \text{on}\;  \Gamma. \label{boundary_pro_d}
	\end{align}
\end{subequations}
where $\bm{n}$ is the unit outer normal to $\Gamma$. 

Dirichlet boundary control has many applications in fluid flow problems and other fields, and therefore the mathematical study of these control problems has become an important area of research.  Major theoretical and computational developments have been made in the recent past; see, e.g., \cite{MR1013883,MR1145711,MR1613873,MR1663656,MR1788073,MR2179484,MR3379275,MR3434480,MR3666648,MR2038338,MR2154110,MR2204633,MR3227252}. However, only in the last ten years have researchers developed thorough well-posedness, regularity, and finite element error analysis results for elliptic PDEs; see \cite{MR2272157,MR2558321,MR3070527,MR2347691,ApelMateosPfeffererRosch17} and the references therein.  One difficulty of Dirichlet boundary control problems is that the Dirichlet boundary data does not directly enter a standard variational setting for the PDE; instead, the state equation is understood in a very weak sense.  Also, solutions of the optimality system typically do not have high regularity on polyhedral domains; corners cause the normal derivative of the adjoint state $ \partial z / \partial \bm{n} $ in the optimality condition \eqref{boundary_pro} to have limited smoothness.  Solutions with limited regularity can lead to complications for numerical methods and numerical analysis.


To avoid the difficulties described above, researchers have considered other approaches including modified cost functionals \cite{MR3317816,MR1135991,MR1145711,MR3614013}, approximating the Dirichlet boundary condition with a Robin boundary condition \cite{MR1874072,MR2020866,MR2567245,MR1632548,ravindran2017finite}, and weak boundary penalization \cite{MR3641789}.


One way to approximate the solution of the original problem without penalization and also avoid the variational difficulty is to use a mixed finite element method.  Recently, Gong and Yan \cite{MR2806572} considered this approach and obtained
\begin{align*}
\norm{u-u_h}_{0,\Omega} = O(h^{1-1/s})
\end{align*}
when the control belongs to $H^{1-1/s}(\Gamma)$ and the lowest order Raviart-Thomas elements are used for the computation.


As researchers continue to investigate Dirichlet boundary control problems of increasingly complexity, it may become natural to utilize discontinuous Galerkin methods for the spatial discretization of problems involving strong convection and discontinuities.  We have performed preliminary computations using an hybridizable discontinuous Galerkin (HDG) method for a similar elliptic Dirichlet boundary control problem for the Stokes equations.  Our preliminary results for this problem indicate that the optimal control can indeed be discontinuous at the corners of the domain.  Before we continue to investigate problems of such complexity, we begin this line of research by considering an HDG method to approximate the solution of the above Dirichlet boundary control problem.

HDG methods also utilize a mixed formulation and therefore avoid the variational difficulty of the Dirichlet control problem.  Furthermore, the number of degrees of freedom for HDG methods are much less than standard mixed methods or other DG approaches.  Moreover, the RT element is a special case of the HDG method.  We provide more background about HDG methods in Section \ref{sec:HDG}.


We propose an HDG method to approximate the control in Section \ref{sec:HDG}, and in Section \ref{sec:analysis} we prove an optimal superlinear rate of convergence for the control in 2D under certain assumptions on the domain and $ y_d $.  To give a specific example, for a rectangular 2D domain and $ y_d\in H^1(\Omega) \cap L^\infty(\Omega)$, we obtain the following a priori error bounds for the state $ y $, adjoint state $ z $, their fluxes $ \bm{q} = -\nabla y $ and $ \bm{p} = -\nabla z $, and the optimal control $ u $:
%
\begin{align*}
&\norm{y-{y}_h}_{0,\Omega}=O( h^{3/2-\varepsilon} ),\quad  \;\norm{z-{z}_h}_{0,\Omega}=O( h^{3/2-\varepsilon} ),\\
&\norm{\bm{q}-\bm{q}_h}_{0,\Omega} = O( h^{1-\varepsilon} ),\quad  \quad\;\; \norm{\bm{p}-\bm{p}_h}_{0,\Omega} = O( h^{3/2-\varepsilon} ),
\end{align*}
and
\begin{align*}
&\norm{u-{u}_h}_{0,\Gamma} = O( h^{3/2-\varepsilon} ),
\end{align*}
for any $\varepsilon >0$.  We demonstrate the performance of the HDG method with numerical experiments in 2D and 3D in Section \ref{sec:numerics}.

\section{Background: The Optimality System and Regularity}
\label{sec:background}

To begin, we review some fundamental results concerning the optimality system for the control problem and the regularity of the solution in 2D polygonal domains.

Throughout the paper we adopt the standard notation $W^{m,p}(\Omega)$ for Sobolev spaces on $\Omega$ with norm $\|\cdot\|_{m,p,\Omega}$ and seminorm $|\cdot|_{m,p,\Omega}$ . We denote $W^{m,2}(\Omega)$ by $H^{m}(\Omega)$ with norm $\|\cdot\|_{m,\Omega}$ and seminorm $|\cdot|_{m,\Omega}$. Also, $H_0^1(\Omega)=\{v\in H^1(\Omega):v=0 \;\mbox{on}\; \partial \Omega\}$.  We denote the $L^2$-inner products on $L^2(\Omega)$ and $L^2(\Gamma)$ by
\begin{align*}
(v,w) &= \int_{\Omega} vw  \quad \forall v,w\in L^2(\Omega),\\
\left\langle v,w\right\rangle &= \int_{\Gamma} vw  \quad\forall v,w\in L^2(\Gamma).
\end{align*}
Define the space $H(\text{div};\Omega)$ as
\begin{align*}
H(\text{div},\Omega) = \{\bm{v}\in [L^2(\Omega)]^2, \nabla\cdot \bm{v}\in L^2(\Omega)\}.
\end{align*}

To avoid the the variational difficulty we follow the strategy introduced by Wei Gong and Ningning Yan \cite{MR2806572} and consider a mixed formulation of the optimality system.  Introduce two flux variables $\bm{q} = -\nabla y$ and $\bm{p} = -\nabla z$.  The mixed weak form of  \eqref{boundary_pro_a}-\eqref{boundary_pro_d} is
\begin{subequations}\label{mixed}
	\begin{align}
	(\bm{q},\bm{r})-(y,\nabla\cdot\bm{r})+\left\langle u, \bm{r}\cdot \bm{n}\right\rangle&=0, \label{mixed_a}\\
	(\nabla\cdot \bm{q}, w) &= (f,w),  \label{mixed_b}\\
	(\bm{p},\bm{r})-(z,\nabla\cdot\bm{r})&=0, \label{mixed_c}\\
	(\nabla\cdot \bm{p}, w) -(y,w)&= (y_d,w), \label{mixed_d}\\
	\left\langle\gamma u + \bm{p}\cdot \bm{n}, \xi \right\rangle &=0, \label{mixed_e}
	\end{align}
\end{subequations}
for all $(\bm{r},w,\xi)\in H(\text{div},\Omega)\times L^2(\Omega)\times L^2(\Gamma)$.


One of the main reasons that Dirichlet boundary control problem can be challenging numerically is that the solution can have very low regularity, and this restricts the convergence rates of finite element and DG methods.  In order to prove a superlinear convergence rate for the optimal control for the HDG method in \ref{sec:analysis}, we assume the solution has the following fractional Sobolev regularity:
\begin{equation}\label{eqn:regularity1}
u \in H^{r_u}(\Gamma),  \quad  y \in H^{r_y}(\Omega),  \quad  z \in H^{r_z}(\Omega),  \quad  \bm{q} \in H^{r_{\bm q}}(\Omega),  \quad  \bm{p} \in H^{r_{\bm p}}(\Omega),
\end{equation}
with
\begin{equation}\label{eqn:regularity2}
r_u > 1,  \quad  r_y > 1,  \quad  r_z > 2,  \quad  r_{\bm q} > 1/2,  \quad  r_{\bm p} > 1.
\end{equation}
We require $ r_{\bm q} > 1/2 $ in order to guarantee $ q $ has a well-defined trace on the boundary $ \Gamma $.  We note that it may be possible to use the techniques in \cite{MR3508837} to lower the regularity requirement on $ \bm q $.  We leave this to be considered elsewhere.

For a 2D convex polygonal domain and $ f = 0 $, we use a recent regularity result of Mateos and Neitzel \cite{MR3465458} below to give conditions on the domain and $ y_d $ to guarantee the solution has the above regularity.  For a higher dimensional convex polyhedral domain, the regularity theory is much more complicated, and we do not attempt to provide conditions to guarantee the above regularity in this work.
\begin{theorem}[\cite{MR3465458}, Lemma 3 and Corollary 1]\label{thm:regularity}
	Suppose $ f = 0 $ and $ \Omega \subset \mathbb{R}^2 $ is a bounded convex domain with polygonal boundary $ \Gamma $.  Let $ \omega \in [ \pi/3, \pi ) $ be the largest interior angle of $\Gamma$, and define $ p_{\Omega}$, $ r_\Omega $ by
	\[ p_\Omega=\frac{2}{ 2-\pi/\max\{\omega, \pi/2\} } \in (2, \infty], \]
	and
	\[ r_\Omega=1+\frac{\pi}{\omega} \in (2, 4]. \]
	If $ y_d \in L^p(\Omega) \cap H^{r-2}(\Omega) $ for all $ p < p_\Omega $ and $ r < r_\Omega $, then the solution $ (u,y,z) $ satisfies
	\begin{align*}
	  u &\in H^{r-3/2}(\Gamma) \cap W^{1-1/p,p}(\Gamma),\\
	  y &\in H^{r-1}(\Omega) \cap W^{1,p}(\Omega),\\
	  z &\in H^1_0(\Omega) \cap H^r(\Omega) \cap W^{2,p}(\Omega)
	\end{align*}
	%
	%
	for all
	\[ p < p_\Omega,  \quad  r < \min\{ 3, r_\Omega \}. \]
\end{theorem}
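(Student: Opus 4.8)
The plan is to run a regularity bootstrap on the coupled optimality system \eqref{boundary_pro_b}--\eqref{boundary_pro_d}, cycling through the feedback loop $u \mapsto y \mapsto z \mapsto u$ generated by the three equations: $u$ is the Dirichlet datum of the harmonic state $y$, the difference $y - y_d$ is the source of the adjoint $z$, and the optimality condition $u = \gamma^{-1}\partial z/\partial\bm n$ closes the loop. Each traversal of the loop is expected to raise the Sobolev exponent by a fixed amount, so the theorem reduces to identifying the largest exponent at which the loop is self-consistent; that limiting value is dictated by the corner singularities of the Dirichlet Laplacian on the polygon $\Omega$.

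First I would fix a baseline. Existence theory for \eqref{cost1}--\eqref{p2} supplies $u \in L^2(\Gamma)$, and then the very weak (transposition) solution of \eqref{boundary_pro_b} gives $y \in L^2(\Omega)$, so the adjoint source $y - y_d$ lies in $L^2(\Omega)$. Because $\Omega$ is convex, the homogeneous Dirichlet problem \eqref{boundary_pro_b}--\eqref{boundary_pro_c} yields $z \in H^2(\Omega) \cap H_0^1(\Omega)$, hence $\partial z/\partial\bm n \in H^{1/2}(\Gamma)$ and, by \eqref{boundary_pro_d}, $u \in H^{1/2}(\Gamma)$. Next comes the bootstrap on the Hilbert scale: assuming at some stage $u \in H^{s-3/2}(\Gamma)$, the harmonic extension raises this to $y \in H^{s-1}(\Omega)$; combined with the hypothesis $y_d \in H^{r-2}(\Omega)$, the adjoint source then lies in $H^{\min\{s-1,\,r-2\}}(\Omega)$, and elliptic regularity for the Dirichlet Laplacian on the polygon lifts $z$ to $H^{\min\{s,\,r\}}(\Omega)$ up to the corner ceiling, after which the normal trace returns $u \in H^{\min\{s,\,r\}-3/2}(\Gamma)$. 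Iterating and passing to the fixed point of this exponent map, the regularity stabilizes at the self-consistent level $r$, and the $W^{k,p}$ statements follow by running the identical loop on the $L^p$ scale, where the trace of $\nabla z \in W^{1,p}(\Omega)$ delivers $u \in W^{1-1/p,p}(\Gamma)$ and hence $y \in W^{1,p}(\Omega)$.

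The whole argument rests on two sharp inputs, and these are where the real work lies. The first is the corner analysis: near the vertex of largest interior angle $\omega$ the Dirichlet Laplacian carries the singular function $\rho^{\pi/\omega}$ in polar coordinates centered at the vertex, which limits $z$ to $H^r$ with $r < 1 + \pi/\omega = r_\Omega$ and to $W^{2,p}$ with $p < p_\Omega$; one must invoke the Kondrat'ev--Grisvard--Dauge corner expansions to pin down these exact thresholds and to check that, for $\omega \in [\pi/3,\pi)$, no additional lower-order singular term of the adjoint problem intrudes. The second input, and the source of the absolute ceiling $r < 3$, is the trace theory on the \emph{polygonal} boundary $\Gamma$: the space $H^s(\Gamma)$ with $s > 3/2$ requires compatibility of tangential derivatives at the corners, and the Neumann trace $u = \gamma^{-1}\partial z/\partial\bm n$ of the Dirichlet-zero function $z$ does not satisfy these corner conditions in general, so $u$ cannot exceed $H^{3/2}(\Gamma)$ --- capping $r$ at $3$ no matter how large $r_\Omega$ is.

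I expect the main obstacle to be precisely this coupling of the two scales with the sharp corner thresholds: reconciling the fractional Hilbert scale $H^r$ with the integer $L^p$ scale $W^{k,p}$ across the loop, tracking the control law $u = \gamma^{-1}\partial z/\partial\bm n$ simultaneously through both, and verifying at each stage that the normal-derivative trace lands in the correct fractional space $H^{r-3/2}(\Gamma) \cap W^{1-1/p,p}(\Gamma)$ without picking up a spurious corner singularity. The delicate bookkeeping needed to show that the exponent map has a fixed point exactly at $r < \min\{3, r_\Omega\}$ and $p < p_\Omega$ is what makes the argument nontrivial; the complete proof is carried out in \cite{MR3465458}.
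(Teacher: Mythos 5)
This theorem is not proved in the paper at all: as the bracketed attribution in its header indicates, it is imported verbatim from Mateos and Neitzel \cite{MR3465458} (their Lemma 3 and Corollary 1), so there is no in-paper argument to compare against and your sketch must stand on its own. Its architecture --- a bootstrap around the loop $u \mapsto y \mapsto z \mapsto u$, with the thresholds $r_\Omega$, $p_\Omega$ supplied by Kondrat'ev--Grisvard corner theory and the absolute cap $r < 3$ traced to corner-compatibility obstructions for $u = \gamma^{-1}\partial z/\partial\bm{n}$ in $H^{3/2}(\Gamma)$ --- is indeed the standard route and is consistent in spirit with the cited source, including your correct observations that $z \in H^2(\Omega)\cap H^1_0(\Omega)$ launches the iteration and that the corner singular exponent $\pi/\omega$ dictates the ceilings.

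There are, however, two concrete problems. First, your exponent map stalls as written: from $u \in H^{s-3/2}(\Gamma)$ you get $y \in H^{s-1}(\Omega)$ and a source $y - y_d \in H^{\min\{s-1,\,r-2\}}(\Omega)$, but the elliptic shift then yields $z \in H^{\min\{s-1,\,r-2\}+2}(\Omega) = H^{\min\{s+1,\,r\}}(\Omega)$ (truncated below $r_\Omega$), not $H^{\min\{s,\,r\}}(\Omega)$ as you claim. With your version the map on exponents is the identity capped at $r$, so the iteration never leaves its baseline $s = 2$ and the bootstrap proves nothing beyond $u \in H^{1/2}(\Gamma)$; with the corrected increment of one full derivative per pass it converges to $\min\{r, r_\Omega, 3\}$ as desired, so the slip is reparable but, as stated, the step fails. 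Second, everything that makes the theorem sharp is named but not established: the exact values $r_\Omega = 1 + \pi/\omega$ and $p_\Omega = 2/(2 - \pi/\max\{\omega,\pi/2\})$ (in particular why the $\pi/2$ truncation appears in the $L^p$ threshold but not in the Hilbert one), the global --- not merely edgewise --- membership $\partial z/\partial\bm{n} \in H^{1/2}(\Gamma)$ at the baseline (which requires $\nabla z$ to vanish at the corners, using $z \in H^2\cap H^1_0$ and that the two edge tangents are independent since $\omega < \pi$), and the verification that the Neumann trace picks up no new singular terms at each pass. Your closing sentence defers all of this to \cite{MR3465458}, which is exactly what the paper itself does by quoting the result; that is legitimate for the paper, but it means your text is a roadmap with one would-fail step rather than a proof.
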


We also require the regularity for the flux variables $ \bm{q} = -\nabla y $ and $ \bm p= -\nabla z $.
\begin{corollary}\label{cor:regularity_flux}
	Under the assumptions of Theorem \ref{thm:regularity}, the flux variables $ \bm{q} = -\nabla y $ and $ \bm p = -\nabla z $ satisfy
	\[ \bm{q} \in H^{r-2}(\Omega) \cap H(\mathrm{div},\Omega),  \quad  \bm{p} \in H^{r-1}(\Omega) \cap H(\mathrm{div},\Omega)  \]
	%
	for all $ r < \min\{ 3, r_\Omega \} $.
\end{corollary}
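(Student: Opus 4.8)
The plan is to read off both the fractional Sobolev regularity and the $H(\mathrm{div},\Omega)$ membership directly from the regularity of $y$ and $z$ furnished by Theorem \ref{thm:regularity}, combined with the state and adjoint equations \eqref{boundary_pro_a} and \eqref{boundary_pro_c}. The argument is essentially bookkeeping with Sobolev indices, so I would organize it around the two properties being claimed.

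First, for the fractional regularity, I would use the standard fact that the gradient maps $H^s(\Omega)$ boundedly into $[H^{s-1}(\Omega)]^2$ for every real $s$ (componentwise reduction of the Sobolev index for integer $s$, extended to fractional $s$ by interpolation or directly through the Gagliardo seminorm). Theorem \ref{thm:regularity} gives $y \in H^{r-1}(\Omega)$ and $z \in H^r(\Omega)$ for all $r < \min\{3, r_\Omega\}$, and applying this mapping immediately yields $\bm q = -\nabla y \in H^{r-2}(\Omega)$ and $\bm p = -\nabla z \in H^{r-1}(\Omega)$ for the same range of $r$.

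Second, for the $H(\mathrm{div},\Omega)$ membership I would treat the divergences separately. Since $r > 2$ is admissible, both $y \in H^{r-1}(\Omega)$ and $z \in H^r(\Omega)$ lie in $H^1(\Omega)$, so $\bm q, \bm p \in [L^2(\Omega)]^2$ and it remains only to place each divergence in $L^2(\Omega)$. For $\bm p$ this is automatic from the fractional regularity just obtained: $\nabla\cdot\bm p = -\Delta z \in H^{r-2}(\Omega) \hookrightarrow L^2(\Omega)$ because $r - 2 > 0$. For $\bm q$, however, this route fails, since $r < 3$ forces $r - 2 < 1$ and hence only gives $\nabla\cdot\bm q = -\Delta y$ a priori in a space of negative order. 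This is the single point requiring care, and it is where the hypothesis $f = 0$ becomes essential: the state equation \eqref{boundary_pro_a} gives $\nabla\cdot\bm q = -\Delta y = f = 0 \in L^2(\Omega)$ directly. Thus there is no real obstacle in the proof, only the observation that the $H(\mathrm{div},\Omega)$ claim for $\bm q$ must be argued from $f = 0$ rather than from the smoothness of $y$.
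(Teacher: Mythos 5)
Your proof is correct, but for the $H(\mathrm{div},\Omega)$ half it takes a genuinely different route from the paper. The fractional-regularity step is the same in both: the paper also obtains it by identifying $\bm q = -\nabla y$, $\bm p = -\nabla z$ and differentiating the Sobolev regularity of $y$ and $z$ from Theorem \ref{thm:regularity}. For the divergence, however, the paper does not touch the strong form of the equations at all: it treats the optimal control $u$ as known boundary data, observes $u \in H^{1/2}(\Gamma)$, and invokes the standard well-posedness theory for the mixed formulation \eqref{mixed_a}--\eqref{mixed_b} to conclude $\bm q \in H(\mathrm{div},\Omega)$ (and similarly for $\bm p$), only afterwards recovering $\bm q = -\nabla y$ by integration by parts against smooth $\bm r$. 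You instead read the divergences directly off the PDEs: $\nabla\cdot\bm q = -\Delta y = f = 0$ from \eqref{boundary_pro_a}, and $\nabla\cdot\bm p = -\Delta z \in H^{r-2}(\Omega) \hookrightarrow L^2(\Omega)$ for an admissible $r > 2$ (equivalently, $-\Delta z = y - y_d \in L^2(\Omega)$ from \eqref{boundary_pro_b}), using the regularity of $y$ and $z$ to put the fluxes themselves in $[L^2(\Omega)]^2$. Your version is more elementary and self-contained --- no appeal to mixed well-posedness --- and your observation that $\bm q \in H^{r-2}(\Omega)$ with $r - 2 < 1$ can never by itself place $\nabla\cdot\bm q$ in $L^2(\Omega)$ is a genuine point that the paper's one-line argument glosses over. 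The paper's route buys robustness: standard mixed theory gives $\bm q \in H(\mathrm{div},\Omega)$ for any $f \in L^2(\Omega)$ using only $u \in H^{1/2}(\Gamma)$, not the full smoothness of $y$. Two minor remarks on your write-up: what is actually essential is $f \in L^2(\Omega)$ rather than $f = 0$ (the latter being the standing hypothesis of Theorem \ref{thm:regularity}, so no harm done); and since the state equation is a priori understood in the very weak sense, you should note that $-\Delta y = f$ holds distributionally by testing against $C_c^\infty(\Omega)$ --- harmless here, as $y \in H^{r-1}(\Omega)$ with $r - 1 > 1$ makes $y$ a genuine weak solution.
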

\begin{proof}
	We treat the optimal control $ u $ as known, and then $ (y,\bm{q}) $ satisfy the weak mixed formulation \eqref{mixed_a}-\eqref{mixed_b}.  Since $ u \in H^{1/2}(\Gamma) $, the standard theory for this mixed problem gives $ \bm{q} \in H(\text{div},\Omega) $.  Taking $ \bm{r} $ smooth and integrating by parts in \eqref{mixed_a} gives $ \bm{q} = -\nabla y $, and therefore the fractional Sobolev regularity for $ \bm{q} $ follows directly from Theorem \ref{thm:regularity}.  The regularity for $ \bm{p} $ follows similarly.
\end{proof}

The regularity for the flux variable $ \bm{q} = -\nabla y $ is low; Theorem \ref{cor:regularity_flux} only guarantees $ \bm{q} \in H^{r_{\bm q}} $ for some $ 0 < r_{\bm q} < 1 $.  For the HDG approximation theory, we need the regularity condition $ r_{\bm q} > 1/2 $.  We can guarantee this condition by restricting the maximum interior angle $ \omega $.  Specifically, if if $ y_d $ has the required smoothness and $ \omega $ satisfies
\[ \omega \in [\pi/3, 2 \pi/3), \]
then $ r_\Omega \in (5/2,4] $ and we are guaranteed $ \bm{q} \in H^{r_{\bm q}} $ for some $ r_{\bm q} > 1/2 $.

Also, when we restrict $ \omega \in [\pi/3, 2 \pi/3) $ as above, this guarantees $ u \in H^{r_u} $ for some $ 1 < r_u < 3/2 $ and furthermore the regularity assumption \eqref{eqn:regularity1}-\eqref{eqn:regularity2} is satisfied.  For a rectangular domain, we have $ p_\Omega = \infty $ and $ r_\Omega = 3 $.  Therefore if $ y_d \in H^1(\Omega) \cap L^\infty(\Omega) $ we are guaranteed the fractional Sobolev regularity
\[ r_u = \frac{3}{2} - \varepsilon,  \quad  r_y = 2 - \varepsilon,  \quad  r_z = 3 - \varepsilon,  \quad  r_{\bm q} = 1 - \varepsilon,  \quad  r_{\bm p} = 2 - \varepsilon \]
for any $ \varepsilon > 0 $.

\section{HDG Formulation and Implementation}
\label{sec:HDG}

A mixed method can avoid the variational difficulty by the introducing flux variables $\bm{q}$ and $\bm{p}$ and the equation for the optimal control \eqref{mixed_e}. However, these two additional vector variables will increase the computational cost, even if the lowest order RT method is used.

We introduce an HDG method for the optimality system \eqref{boundary_pro} to take advantage of the mixed formulation and also reduce the computational cost compared to standard mixed methods.  Specifically, we introduce the flux variables but eliminate them before we solve the global equation; this significantly reduces the degrees of freedom.

HDG methods were proposed by Cockburn et al.\ in \cite{MR2485455} as an improvement of tradition discontinuous Galerkin methods and have many applications; see, e.g., 
\cite{MR2772094,MR2513831,MR2558780,MR3626531,MR3522968,MR3477794,MR3463051,MR3452794,MR3343926,MR2796169}.  The approximate scalar variable and flux are expressed in an element-by-element fashion in terms of an approximate trace of the scalar variable along the element boundary. Then, a unique value for the trace at inter-element boundaries is obtained by enforcing flux continuity. This leads to a global equation system in terms of the approximate boundary traces only. The high number of globally coupled degrees of freedom is  significantly reduced compared to  other DG methods and standard mixed methods.



Before we introduce the HDG method, we first set some notation.  Let $ \{ \mathcal T_h \} $ be a conforming quasi-uniform polyhedral mesh of $ \Omega $. We denote by $\partial \mathcal{T}_h$ the set $\{\partial K: K\in \mathcal{T}_h\}$. For an element $K$ of the collection  $\mathcal{T}_h$, let $e = \partial K \cap \Gamma$ denote the boundary face of $ K $ if the $d-1$ Lebesgue measure of $e$ is non-zero. For two elements $K^+$ and $K^-$ of the collection $\mathcal{T}_h$, let $e = \partial K^+ \cap \partial K^-$ denote the interior face between $K^+$ and $K^-$ if the $d-1$ Lebesgue measure of $e$ is non-zero. Let $\varepsilon_h^o$ and $\varepsilon_h^{\partial}$ denote the set of interior and boundary faces, respectively. We denote by $\varepsilon_h$ the union of  $\varepsilon_h^o$ and $\varepsilon_h^{\partial}$. We finally introduce
\begin{align*}
(w,v)_{\mathcal{T}_h} = \sum_{K\in\mathcal{T}_h} (w,v)_K,   \quad\quad\quad\quad\left\langle \zeta,\rho\right\rangle_{\partial\mathcal{T}_h} = \sum_{K\in\mathcal{T}_h} \left\langle \zeta,\rho\right\rangle_{\partial K}.
\end{align*}

Let $\mathcal{P}^k(D)$ denote the set of polynomials of degree at most $k$ on a domain $D$.  We introduce the discontinuous finite element spaces
\begin{align}
\bm{V}_h  &:= \{\bm{v}\in [L^2(\Omega)]^d: \bm{v}|_{K}\in [\mathcal{P}^k(K)]^d, \forall K\in \mathcal{T}_h\},\\
{W}_h  &:= \{{w}\in L^2(\Omega): {w}|_{K}\in \mathcal{P}^{k+1}(K), \forall K\in \mathcal{T}_h\},\\
{M}_h  &:= \{{\mu}\in L^2(\mathcal{\varepsilon}_h): {\mu}|_{e}\in \mathcal{P}^k(e), \forall e\in \varepsilon_h\}.
\end{align}
The space $ W_h $ is for scalar variables, while $ \bm{V}_h $ is for flux variables and $ M_h $ is for boundary trace variables.  Note that the polynomial degree for the scalar variables is one order higher than the polynomial degree for the other variables.  Also, the boundary trace variables will be used to eliminate the state and flux variables from the coupled global equations, thus substantially reducing the number of degrees of freedom.

Let  $M_h(o)$ and $M_h(\partial)$ denote the spaces defined in the same way as $M_h$, but with $\varepsilon_h$ replaced by $\varepsilon_h^o$ and $\varepsilon_h^{\partial}$, respectively. Note that $M_h$ consists of functions which are continuous inside the faces (or edges) $e\in \varepsilon_h$ and discontinuous at their borders. In addition, for any function $w\in W_h$ we use $\nabla w$ to denote the piecewise gradient on each element $K\in \mathcal T_h$. A similar convention applies to the divergence operator $\nabla\cdot\bm r$ for all $\bm r\in \bm V_h$.

\subsection{The HDG Formulation}
To approximate the solution of the mixed weak form \eqref{boundary_pro_a}-\eqref{boundary_pro_d} of the optimality system, the HDG method seeks approximate fluxes ${\bm{q}}_h,{\bm{p}}_h \in \bm{V}_h $, states $ y_h, z_h \in W_h $, interior element boundary traces $ \widehat{y}_h^o,\widehat{z}_h^o \in M_h(o) $, and boundary control $ u_h \in M_h(\partial)$ satisfying
%
%
\begin{subequations}\label{HDG_discrete2}%
	\begin{align}
	(\bm{q}_h, \bm{r_1})_{{\mathcal{T}_h}}- (y_h, \nabla\cdot \bm{r_1})_{{\mathcal{T}_h}}+\langle \widehat y_h^o, \bm{r_1}\cdot \bm{n} \rangle_{\partial{{\mathcal{T}_h}}\backslash {\varepsilon_h^{\partial}}}+ \langle u_h, \bm{r_1}\cdot \bm{n} \rangle_{{\varepsilon_h^{\partial}}} &=0, \label{HDG_discrete2_a}\\
	-(\bm{q}_h, \nabla w_1)_{{\mathcal{T}_h}}
	+\langle\widehat{\bm{q}}_h\cdot\bm{n}, w_1 \rangle_{\partial{{\mathcal{T}_h}}}&=(f, w_1)_{{\mathcal{T}_h}}, \label{HDG_discrete2_b}
	\end{align}
	for all $(\bm{r_1},w_1)\in \bm{V}_h\times W_h$,
	\begin{align}
	(\bm{p}_h, \bm{r_2})_{{\mathcal{T}_h}}- (z_h, \nabla\cdot \bm{r_2})_{{\mathcal{T}_h}}+\langle \widehat{z}_h^o, \bm{r_2}\cdot \bm{n} \rangle_{\partial{{\mathcal{T}_h}}\backslash {\varepsilon_h^{\partial}}} &=0, \label{HDG_discrete2_c}\\
	-(\bm{p}_h, \nabla w_2)_{{\mathcal{T}_h}}
	+\langle\widehat{\bm{p}}_h\cdot \bm{n}, w_2 \rangle_{\partial{{\mathcal{T}_h}}}-({y}_h,  w_2)_{{\mathcal{T}_h}}&=-(y_d, w_2)_{{\mathcal{T}_h}}, \label{HDG_discrete2_d}
	\end{align}
	for all $(\bm{r_2},w_2)\in \bm{V}_h\times W_h$,
	\begin{align}
	\langle\widehat{\bm{q}}_h\cdot \bm{n}, \mu_1 \rangle_{\partial\mathcal{T}_{h}\backslash {{\varepsilon_h^{\partial}}}}&=0, \label{HDG_discrete2_e}
	\end{align}
	for all $\mu_1\in M_h(o)$,
	\begin{align}
	\langle\widehat{\bm{p}}_h\cdot \bm{n}, \mu_2 \rangle_{ \partial\mathcal{T}_{h}\backslash {{\varepsilon_h^{\partial}}}}&=0, \label{HDG_discrete2_g}
	\end{align}
	for all $\mu_2\in M_h(o)$, and
	\begin{align}
	\langle u_h, \mu_3 \rangle_{{\varepsilon_h^{\partial}}}+\langle \gamma^{-1}\widehat{\bm{p}}_h\cdot \bm{n}, \mu_3\rangle_{{{\varepsilon_h^{\partial}}}} &=0, \label{HDG_discrete2_f}
	\end{align}
	for all $\mu_3\in M_h(\partial)$.
	
	The numerical traces on $\partial\mathcal{T}_h$ are defined as
	\begin{align}
	\widehat{\bm{q}}_h\cdot \bm n &=\bm q_h\cdot\bm n+h^{-1}(P_My_h-\widehat y_h^o)   \quad \mbox{on} \; \partial \mathcal{T}_h\backslash\varepsilon_h^\partial, \label{HDG_discrete2_h}\\
	\widehat{\bm{q}}_h\cdot \bm n &=\bm q_h\cdot\bm n+h^{-1} (P_My_h-u_h)  \quad \mbox{on}\;  \varepsilon_h^\partial, \label{HDG_discrete2_i}\\
	\widehat{\bm{p}}_h\cdot \bm n &=\bm p_h\cdot\bm n+h^{-1}(P_Mz_h-\widehat z_h^o)\quad \mbox{on} \; \partial \mathcal{T}_h\backslash\varepsilon_h^\partial,\label{HDG_discrete2_j}\\
	\widehat{\bm{p}}_h\cdot \bm n &=\bm p_h\cdot\bm n+h^{-1} P_Mz_h\quad\quad\quad\quad\mbox{on}\;  \varepsilon_h^\partial,\label{HDG_discrete2_k}
	\end{align}
\end{subequations}
where $P_M$ denotes the standard $L^2$-orthogonal projection from $L^2(\varepsilon_h)$ onto $M_h$.  This completes the formulation of the HDG method.

The HDG formulation with $ h^{-1} $ stabilization, polynomial degree $ k+1 $ for the scalar unknown, and polynomial degree $ k $ for the other unknowns was originally introduced by Lehrenfeld in \cite{Lehrenfeld10}.

\subsection{Implementation}
To arrive at the HDG formulation we implement numerically, we insert  \eqref{HDG_discrete2_h}-\eqref{HDG_discrete2_k} into \eqref{HDG_discrete2_a}-\eqref{HDG_discrete2_f}, and find after some simple manipulations that
\[ ({\bm{q}}_h,{\bm{p}}_h, y_h,z_h,{\widehat{y}}_h^o,{\widehat{z}}_h^o,u_h)\in \bm{V}_h\times\bm{V}_h\times W_h \times W_h\times M_h(o)\times M_h(o)\times M_h(\partial) \]
is the solution of the following weak formulation:
\begin{subequations}\label{imple}
	\begin{align}
	(\bm{q}_h, \bm{r_1})_{{\mathcal{T}_h}}- (y_h, \nabla\cdot \bm{r_1})_{{\mathcal{T}_h}}+\langle \widehat{y}_h^o, \bm{r_1}\cdot \bm{n} \rangle_{\partial{{\mathcal{T}_h}}\backslash \varepsilon_h^{\partial}}+ \langle u_h, \bm{r_1}\cdot \bm{n} \rangle_{\varepsilon_h^{\partial}} &=0, \label{imple_a}\\
	(\bm{p}_h, \bm{r_2})_{{\mathcal{T}_h}}- (z_h, \nabla\cdot \bm{r_2})_{{\mathcal{T}_h}}+\langle \widehat{z}_h^o, \bm{r_2}\cdot \bm{n} \rangle_{\partial{{\mathcal{T}_h}}\backslash \varepsilon_h^{\partial}} &=0, \label{imple_b}\\
	(\nabla\cdot\bm{q}_h,  w_1)_{{\mathcal{T}_h}}
	+\langle h^{-1}P_M y_h, w_1 \rangle_{\partial{{\mathcal{T}_h}}}-\langle h^{-1}\widehat{y}_h^o, w_1 \rangle_{\partial{{\mathcal{T}_h}}\backslash \varepsilon_h^{\partial}} \quad & \\
	-\langle h^{-1} u_h, w_1 \rangle_{\varepsilon_h^\partial}&=(f, w_1)_{{\mathcal{T}_h}}, \label{imple_c}\\
	(\nabla\cdot\bm{p}_h,  w_2)_{{\mathcal{T}_h}}
	+\langle h^{-1} P_Mz_h, w_2 \rangle_{\partial{{\mathcal{T}_h}}}-\langle h^{-1} \widehat{z}_h^o, w_2 \rangle_{\partial{{\mathcal{T}_h}}\backslash \varepsilon_h^{\partial}} \quad & \\
	-(y_h,  w_2)_{{\mathcal{T}_h}}&=-(y_d, w_2)_{{\mathcal{T}_h}}, \label{imple_d}\\
	\langle{\bm{q}_h}\cdot \bm{n}, \mu_1 \rangle_{\partial\mathcal{T}_{h}\backslash {\varepsilon_h^{\partial}}}+\langle h^{-1} y_h ,\mu_1 \rangle_{\partial\mathcal{T}_{h}\backslash {\varepsilon_h^{\partial}}}-\langle h^{-1} \widehat{y}_h^{o} ,\mu_1 \rangle_{\partial\mathcal{T}_{h}\backslash {\varepsilon_h^{\partial}}}&=0, \label{imple_e}\\
	\langle {\bm{p}_h}\cdot \bm{n}, \mu_2\rangle_{ \partial\mathcal{T}_{h}\backslash {\varepsilon_h^{\partial}}}+\langle h^{-1}  z_h, \mu_2\rangle_{ \partial\mathcal{T}_{h}\backslash {\varepsilon_h^{\partial}}}-\langle h^{-1} \widehat{z}_h^o, \mu_2\rangle_{ \partial\mathcal{T}_{h}\backslash {\varepsilon_h^{\partial}}} &=0,
	\label{imple_f}\\
	\langle u_h, \mu_3 \rangle_{\varepsilon_h^{\partial}}+\langle \gamma^{-1}{\bm{p}_h}\cdot \bm{n}, \mu_3\rangle_{{\varepsilon_h^{\partial}}}+\langle \gamma^{-1}h^{-1} z_h, \mu_3\rangle_{{\varepsilon_h^{\partial}}}&=0, \label{imple_g}
	\end{align}
\end{subequations}
for all  $({\bm{r}_1},{\bm{r}_2},w_1,w_2,\mu_1,\mu_2,\mu_3)\in \bm{V}_h\times\bm{V}_h\times W_h \times W_h\times M_h(o)\times M_h(o)\times M_h(\partial)$.

\subsubsection{Matrix equations}

Assume $\bm{V}_h = \mbox{span}\{\bm\varphi_i\}_{i=1}^{N_1}$, $W_h=\mbox{span}\{\phi_i\}_{i=1}^{N_2}$, $M_h^{o}=\mbox{span}\{\psi_i\}_{i=1}^{N_3} $, and $M_h^{\partial}=\mbox{span}\{\psi_i\}_{i=1+N_3}^{N_4}$. Then
\begin{equation}\label{expre}
\begin{split}
&\bm q_{h}= \sum_{j=1}^{N_1}q_{j}\bm\varphi_j,  \quad
\bm p_{h} =  \sum_{j=1}^{N_1}p_{j}\bm\varphi_j,  \quad
y_h = \sum_{j=1}^{N_2}y_{j}\phi_j, \quad z_h =  \sum_{j=1}^{N_2} z_{j}\phi_j,\\
& \widehat{y}_h^o = \sum_{j=1}^{N_3}\alpha_{j}\psi_{j},  \quad  \widehat{z}_h^o = \sum_{j=1}^{N_3}\gamma_{j}\psi_{j}, \quad u_h = \sum_{j=1+N_3}^{N_4} \beta_{j} {\psi}_j.
\end{split}
\end{equation}
Substitute \eqref{expre} into \eqref{imple_a}-\eqref{imple_g} and use the corresponding  test functions to test \eqref{imple_a}-\eqref{imple_g}, respectively, to obtain the matrix equation
\begin{align}\label{system_equation}
\begin{bmatrix}
A_1  &0 &-A_2&0  & A_8&0&A_{9} \\
0 & A_1 &0 &-A_2& 0  &A_8& 0 \\
A_2^T &0&A_5&0&-A_{10}  &0& -A_{11}\\
0 &A_2^T & -A_4 &A_5 &0&-A_{10} &0\\
A_8^T & 0 &A_{10}^T&0 &A_{11}&0&0  \\
0& A_8^T &0&A_{10}^T &0&A_{11} &0 \\
0 & \gamma^{-1}A_{12} &0&\gamma^{-1}A_{13}&0&0 & A_{14}
\end{bmatrix}
\left[ {\begin{array}{*{20}{c}}
	\mathfrak{q}\\
	\mathfrak{p}\\
	\mathfrak{y}\\
	\mathfrak{z}\\
	\mathfrak{\widehat y}\\
	\mathfrak{\widehat z}\\
	\mathfrak{u}
	\end{array}} \right]
=\left[ {\begin{array}{*{20}{c}}
	0\\
	0\\
	b_1\\
	-b_2\\
	0\\
	0\\
	0\\
	\end{array}} \right].
\end{align}
Here, $\mathfrak{q},\mathfrak{p},\mathfrak{y},\mathfrak{z},\mathfrak{\widehat y},\mathfrak{\widehat z},\mathfrak{u}$ are the coefficient vectors for $\bm q_h,\bm p_h,y_h,z_h,\widehat y_h^o, \widehat z_h^o, u_h$, respectively, and
%
%
\begin{gather*}
A_1= [(\bm\varphi_j,\bm\varphi_i )_{\mathcal{T}_h}],  \:\:  A_2 = [(\phi_j,\nabla\cdot\bm{\varphi_i})_{\mathcal{T}_h}],  \:\:  A_3 = [(\psi_j,\bm{\varphi}_i\cdot \bm n)_{\mathcal{T}_h}], \:\:  A_4 = [(\phi_j,\phi_i)_{\mathcal{T}_h}],\\
A_5 = [\langle  h^{-1}P_M\phi_j, \phi_i \rangle_{\partial{{\mathcal{T}_h}}}], \quad  A_6 =  [\langle h^{-1}\psi_j,\psi_i\rangle_{\partial\mathcal{T}_h}], \quad  A_7=
[\langle h^{-1}\psi_j,{\varphi_i}\rangle_{\partial\mathcal{T}_h}], \\
b_1 = [(f,\phi_i )_{\mathcal{T}_h}], \quad  b_2 = [(y_d,\phi_i )_{\mathcal{T}_h}].
\end{gather*}
%
The remaining matrices $A_{8}-A_{14}$ are constructed by extracting the corresponding rows and columns from $A_3 $, $A_6$, and $A_7$. In the actual computation, to save memory we do not assemble the large matrix in equation \eqref{system_equation}.
%

Equation \eqref{system_equation} can be rewritten as
\begin{align}\label{system_equation2}
\begin{bmatrix}
B_1 & B_2&B_3\\
-B_2^T & B_4&B_5\\
B_6&B_7&B_8\\
\end{bmatrix}
\left[ {\begin{array}{*{20}{c}}
	\bm{\alpha}\\
	\bm{\beta}\\
	\bm{\gamma}
	\end{array}} \right]
=\left[ {\begin{array}{*{20}{c}}
	0\\
	b\\
	0
	\end{array}} \right],
\end{align}
where $\bm{\alpha}=[\mathfrak{q};\mathfrak{p}]$, $\bm{\beta}=[\mathfrak{y};\mathfrak{z}]$, $\bm{\gamma}=[\mathfrak{\widehat y};\mathfrak{\widehat z};\mathfrak{u}]$, $ b = [ b_1; -b_2 ] $, and $\{B_i\}_{i=1}^8$ are the corresponding blocks of the coefficient matrix in \eqref{system_equation}.

Due to the discontinuous nature of the approximation spaces $\bm{V_h}$ and ${W_h}$, the first two equations of  \eqref{system_equation2} can be used to eliminate both $\bm{\alpha}$ and $\bm{\beta}$ in an element-by-element fashion.  As a consequence, we can write system \eqref{system_equation2}  as
\begin{align}\label{local_solver}
\left[ {\begin{array}{*{20}{c}}
	\bm{\alpha}\\
	\bm{\beta}
	\end{array}} \right]= \begin{bmatrix}
G_1 & H_1\\
G_2 & H_2
\end{bmatrix}
\left[ {\begin{array}{*{20}{c}}
	\bm \gamma\\
	b
	\end{array}} \right]
\end{align}
and
\begin{align}
B_6\bm{\alpha}+B_7\bm{\beta}+B_8\bm{\gamma} = 0.
\end{align}
We provide details on the element-by-element construction of $G_1,G_2$ and $H_1,H_2$ in the appendix.  Next, we eliminate both $\bm{\alpha}$ and $\bm{\beta}$ to obtain a reduced globally coupled equation for $\bm{\gamma}$ only:
\begin{align}\label{global_eq}
\mathbb{K}\bm{\gamma} = \mathbb{F},
\end{align}
where
\begin{align*}
\mathbb{K}= B_6G_1+B_7G_2+B_8\quad\text{and}\quad\mathbb{F} = B_6H_1+B_7H_2.
\end{align*}
Once $\bm{\gamma}$ is available, both $\bm{\alpha}$ and $\bm{\beta}$ can be recovered from \eqref{local_solver}.
%
%

\begin{remark}
	For HDG methods, the standard approach is to first compute the local solver independently on each element and then assemble the global system.  The process we follow here is to first assemble the global system and then reduce its dimension by simple block-diagonal algebraic operations.  The two approaches are equivalent.
\end{remark}

Equation \eqref{local_solver} says we can express the approximate the scalar state variable and corresponding fluxes in terms of the approximate traces on the element boundaries.  The global equation \eqref{global_eq} only involves the approximate traces. Therefore, the high number of globally coupled degrees of freedom in the HDG method is significantly reduced. This is one excellent feature of HDG methods.

\section{Error Analysis}
\label{sec:analysis}

Next, we provide a convergence analysis of the above HDG method for the Dirichlet boundary control problem.  Throughout this section, we assume $ \Omega $ is a bounded convex polyhedral domain and we also assume the regularity condition \eqref{eqn:regularity1}-\eqref{eqn:regularity2} is satisfied.  For the 2D case, recall Section \ref{sec:background} provides conditions on $ \Omega $ and $ y_d $ guaranteeing the required regularity.

\subsection{Main result}

First, we present the following main theoretical result of this work.  Recall we assume the fractional Sobolev regularity exponents satisfy
\[ r_u > 1,  \quad  r_y > 1,  \quad  r_z > 2,  \quad  r_{\bm q} > 1/2,  \quad  r_{\bm p} > 1. \]
\begin{theorem}\label{main_res}
	For
	\begin{equation*}\label{eqn:s_rates}
	s_{y} = \min\{r_{y}, k+2\},  \:\:  s_{z} = \min\{r_{z}, k+2\},  \:\:  s_{\bm q} = \min\{r_{\bm q}, k+1\},  \:\:  s_{\bm p} = \min\{r_{\bm p}, k+1\},
	\end{equation*}
	%
	we have
	\begin{align*}
	\norm{u-u_h}_{\varepsilon_h^\partial}&\lesssim h^{s_{\bm p}-\frac 1 2}\norm{\bm p}_{s_{\bm p},\Omega} +  h^{s_{z}-\frac 3 2}\norm{z}_{s_{z},\Omega} + h^{s_{\bm q}+\frac 1 2}\norm{\bm q}_{s_{\bm q},\Omega} + h^{s_{y}-\frac 12 }\norm{y}_{s_{y},\Omega},\\
	\norm {y-y_h}_{\mathcal T_h} &\lesssim h^{s_{\bm p}-\frac 1 2}\norm{\bm p}_{s_{\bm p},\Omega} +  h^{s_{z}-\frac 3 2}\norm{z}_{s_{z},\Omega} + h^{s_{\bm q}+\frac 1 2}\norm{\bm q}_{s_{\bm q},\Omega} + h^{s_{y}-\frac 12 }\norm{y}_{s_{y},\Omega}\\
	\norm {\bm q - \bm q_h}_{\mathcal T_h} &\lesssim h^{s_{\bm p}-1}\norm{\bm p}_{s_{\bm p},\Omega} +  h^{s_{z}-2}\norm{z}_{s_{z},\Omega} + h^{s_{\bm q}}\norm{\bm q}_{s_{\bm q},\Omega} + h^{s_{y}-1 }\norm{y}_{s_{y},\Omega},\\
	\norm {\bm p - \bm p_h}_{\mathcal T_h}  &\lesssim h^{s_{\bm p}-\frac 1 2}\norm{\bm p}_{s_{\bm p},\Omega} +  h^{s_{z}-\frac 3 2}\norm{z}_{s_{z},\Omega} + h^{s_{\bm q}+\frac 1 2}\norm{\bm q}_{s_{\bm q},\Omega} + h^{s_{y}-\frac 12 }\norm{y}_{s_{y},\Omega},\\
	\norm {z - z_h}_{\mathcal T_h} & \lesssim  h^{s_{\bm p}-\frac 1 2}\norm{\bm p}_{s_{\bm p},\Omega} +  h^{s_{z}-\frac 3 2}\norm{z}_{s_{z},\Omega} + h^{s_{\bm q}+\frac 1 2}\norm{\bm q}_{s_{\bm q},\Omega} + h^{s_{y}-\frac 12 }\norm{y}_{s_{y},\Omega}.
	\end{align*}
\end{theorem}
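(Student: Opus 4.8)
The plan is to follow the standard ``auxiliary problem plus duality'' strategy for HDG error analysis of optimal control, adapted to the coupled state/adjoint structure here. First I would introduce two auxiliary HDG solutions: the pair $(\bm q_h(u), y_h(u), \widehat y_h^o(u))$ defined as the HDG approximation of the state equations \eqref{HDG_discrete2_a},\eqref{HDG_discrete2_b},\eqref{HDG_discrete2_e} driven by the \emph{exact} control $u$ in place of $u_h$, and the pair $(\bm p_h(y), z_h(y), \widehat z_h^o(y))$ defined as the HDG approximation of the adjoint equations \eqref{HDG_discrete2_c},\eqref{HDG_discrete2_d},\eqref{HDG_discrete2_g} driven by the exact state $y$. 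Each auxiliary problem is now a single, decoupled HDG discretization of a Poisson problem with known data, so its error can be estimated by the classical HDG analysis.

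Second, using the HDG projection $\Pi_h = (\Pi_V, \Pi_W)$ of Cockburn et al., I would split each auxiliary error into a projection part and a discrete part, write the resulting error equations, and run the standard energy argument to bound $\norm{\bm q - \bm q_h(u)}_{\mathcal T_h}$, $\norm{y - y_h(u)}_{\mathcal T_h}$ and the associated numerical-trace terms, and likewise $\norm{\bm p - \bm p_h(y)}_{\mathcal T_h}$ and $\norm{z - z_h(y)}_{\mathcal T_h}$. The approximation properties of $\Pi_h$ under the fractional regularity \eqref{eqn:regularity1}--\eqref{eqn:regularity2} produce the powers of $h$ appearing on the right-hand sides; the requirement $r_{\bm q} > 1/2$ enters precisely here, to make the boundary trace of $\bm q$ and the corresponding projection error on $\varepsilon_h^\partial$ well defined.

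Third --- and this is the crux --- I would estimate the control error. Set $\delta^y = y_h(u) - y_h$, $\delta^{\bm q} = \bm q_h(u) - \bm q_h$, $\delta^z = z_h(y) - z_h$, $\delta^{\bm p} = \bm p_h(y) - \bm p_h$, together with the corresponding trace differences. Subtracting the full HDG system \eqref{HDG_discrete2} from the two auxiliary systems shows that the discrete state errors $(\delta^{\bm q}, \delta^y)$ are forced only by $u - u_h$ through the boundary terms \eqref{HDG_discrete2_a},\eqref{HDG_discrete2_i}, while the discrete adjoint errors $(\delta^{\bm p}, \delta^z)$ are forced by the state error $y - y_h$ through the source in \eqref{HDG_discrete2_d}. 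The key step is an energy identity obtained by testing the state error equations against the discrete adjoint errors and the adjoint error equations against the discrete state errors: the antisymmetric volume and $h^{-1}$-stabilization terms cancel by the dual structure of the two discretizations, leaving an identity of the schematic form
\[
\gamma\,\norm{u-u_h}_{\varepsilon_h^\partial}^2 + \norm{\delta^y}_{\mathcal T_h}^2 + (\text{stabilization}) = \langle u-u_h,\,(\bm p - \bm p_h(y))\cdot\bm n\rangle_{\varepsilon_h^\partial} + (\text{auxiliary-error terms}),
\]
where the right-hand side is controlled by the Step~2 estimates after invoking the exact optimality relation $\gamma u + \bm p\cdot\bm n = 0$ from \eqref{mixed_e} and its discrete analogue \eqref{HDG_discrete2_f}. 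Absorbing the $\norm{u-u_h}$ factor on the left and bounding the boundary flux error yields the control bound, including the superconvergent $h^{s_{\bm q}+1/2}$ contribution.

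Finally, the remaining estimates follow from the triangle inequality together with the two previous steps: for instance $\norm{y-y_h}_{\mathcal T_h} \le \norm{y - y_h(u)}_{\mathcal T_h} + \norm{\delta^y}_{\mathcal T_h}$, and analogously for $\bm q$, $z$, and $\bm p$, where the discrete-error factors $\norm{\delta^y}_{\mathcal T_h}$, $\norm{\delta^{\bm q}}_{\mathcal T_h}$, $\norm{\delta^z}_{\mathcal T_h}$, $\norm{\delta^{\bm p}}_{\mathcal T_h}$ are in turn bounded in terms of $\norm{u-u_h}_{\varepsilon_h^\partial}$ through the Step~3 energy estimate. I expect the main obstacle to be Step~3: arranging the test functions so that the dual cancellation is exact, correctly tracking the $h^{-1}$ stabilization so that the boundary contributions close, and extracting the extra half power of $h$ that produces the superconvergent control rate under the low flux regularity $r_{\bm q} > 1/2$.
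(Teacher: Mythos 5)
Your proposal follows essentially the same route as the paper's proof: an auxiliary HDG problem driven by (a projection of) the exact control, energy estimates for the auxiliary errors, a dual-cancellation identity between the two $\mathscr B$-forms that, after inserting the continuous optimality relation $\gamma u + \bm p\cdot\bm n = 0$ and its discrete analogue, yields exactly the paper's identity
\begin{align*}
\norm{u-u_h}_{\varepsilon_h^\partial}^2 + \gamma^{-1}\norm{\zeta_y}_{\mathcal T_h}^2
= \bigl\langle \gamma^{-1}(\bm p_h(u)-\bm p)\cdot\bm n + \gamma^{-1}h^{-1}P_M z_h(u),\, u-u_h\bigr\rangle_{\varepsilon_h^\partial},
\end{align*}
followed by triangle inequalities. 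One genuine (but harmless) deviation: you drive the auxiliary adjoint by the exact state $y$, whereas the paper drives it by the auxiliary state $y_h(u)$ (source $y_h(u)-y_d$ in \eqref{HDG_u_b}); with your choice the cross term in the cancellation identity is $(y-y_h,\zeta_y)_{\mathcal T_h}$ rather than $\norm{\zeta_y}_{\mathcal T_h}^2$ on the nose, which costs one extra Cauchy--Schwarz/Young step against the auxiliary error but changes nothing in the rates. A related small point: since $w_1|_e \in \mathcal P^{k+1}(e)$ while $M_h$ has degree $k$, $\langle u, h^{-1}w_1\rangle_{\varepsilon_h^\partial} \neq \langle P_M u, h^{-1}w_1\rangle_{\varepsilon_h^\partial}$ in general; the paper uses $P_M u$ in \eqref{HDG_u} precisely to keep the error equations clean, and your ``exact $u$'' variant picks up an extra (controllable) consistency term there.

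Two concrete issues would need repair before your plan closes. First, the tailored HDG projection of Cockburn et al.\ that you invoke does not apply to this scheme: the method here is of Lehrenfeld--Sch\"oberl type, with scalar degree $k+1$, flux degree $k$, and stabilization $h^{-1}(P_M y_h - \widehat y_h)$ involving $P_M$, while the Cockburn--Gopalakrishnan--Sayas projection is constructed for equal-degree spaces and stabilization $\tau(y_h-\widehat y_h)$. The paper instead uses plain $L^2$ projections \eqref{L2_projection} together with the key inequality of Lemma \ref{nabla_ine} (from \cite{MR3440284}) and the discrete Poincar\'e inequality of Lemma \ref{lemma:discr_Poincare_ineq}; that is the correct toolkit here. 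Second, and more substantively, your Step 2 claims the auxiliary scalar error is obtained by ``the standard energy argument,'' but the energy argument alone gives only $\norm{y-y_h(u)}_{\mathcal T_h}\lesssim h^{s_{\bm q}}\norm{\bm q}_{s_{\bm q},\Omega}+h^{s_y-1}\norm{y}_{s_y,\Omega}$. The rates of Theorem \ref{main_res} require the Aubin--Nitsche duality argument of the paper's Step 3 (dual problem \eqref{Dual_PDE} with the convexity-based $H^2$ regularity \eqref{reg_e}), which improves this to $h^{s_{\bm q}+1}+h^{s_y}$; that improvement propagates through the adjoint estimates and, after the $h^{-1/2}$ loss in the boundary terms $\norm{\bm p_h(u)-\bm p}_{\partial\mathcal T_h}$ and $h^{-1}\norm{P_M z_h(u)}_{\varepsilon_h^\partial}$, produces the superconvergent $h^{s_{\bm q}+1/2}$ and $h^{s_y-1/2}$ contributions in the control bound. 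Your opening phrase ``plus duality'' suggests you intend this, but as written the plan would lose half an order in the control estimate.
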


Using the regularity results for the 2D case presented in Section \ref{sec:background}, we obtain the following result.
\begin{corollary}\label{main_reslut}
	Suppose $ d = 2 $, $ f = 0 $, and $ k = 1 $.  Let $ \omega \in [ \pi/3, 2\pi/3 ) $ be the largest interior angle of $\Gamma$, and define $ p_\Omega$, $ r_\Omega $ by
	\[ p_\Omega=\frac{2}{ 2-\pi/\max\{\omega, \pi/2\} } \in (4, \infty],  \quad  r_\Omega=1+\frac{\pi}{\omega} \in (5/2, 4]. \]
	If $ y_d \in L^p(\Omega) \cap H^{r-2}(\Omega) $ for all $ p < p_\Omega $ and $ r < r_\Omega $, then for any $ r < \min\{ 3, r_\Omega \} $ we have
	%
	%
	\begin{align*}
	\norm{u-u_h}_{\varepsilon_h^\partial}&\lesssim h^{r - \frac 3 2} (\norm{\bm p}_{H^{r-1}(\Omega)} +  \norm{z}_{H^{r}(\Omega)} + \norm{\bm q}_{H^{r-2}(\Omega)} + \norm{y}_{H^{r-1}(\Omega)}),\\
	\norm{y-y_h}_{\mathcal T_h}&\lesssim h^{r - \frac 3 2} (\norm{\bm p}_{H^{r-1}(\Omega)} +  \norm{z}_{H^{r}(\Omega)} + \norm{\bm q}_{H^{r-2}(\Omega)} + \norm{y}_{H^{r-1}(\Omega)}),\\
	\norm {\bm q - \bm q_h}_{\mathcal T_h}  &\lesssim h^{r-2} (\norm{\bm p}_{H^{r-1}(\Omega)} +  \norm{z}_{H^{r}(\Omega)} + \norm{\bm q}_{H^{r-2}(\Omega)} + \norm{y}_{H^{r-1}(\Omega)}),\\
	\norm {\bm p - \bm p_h}_{\mathcal T_h}   &\lesssim h^{r - \frac 3 2} (\norm{\bm p}_{H^{r-1}(\Omega)} +  \norm{z}_{H^{r}(\Omega)} + \norm{\bm q}_{H^{r-2}(\Omega)} + \norm{y}_{H^{r-1}(\Omega)}),\\
	\norm {z - z_h}_{\mathcal T_h}   & \lesssim  h^{r - \frac 3 2} (\norm{\bm p}_{H^{r-1}(\Omega)} +  \norm{z}_{H^{r}(\Omega)} + \norm{\bm q}_{H^{r-2}(\Omega)} + \norm{y}_{H^{r-1}(\Omega)}).
	\end{align*}
	%
\end{corollary}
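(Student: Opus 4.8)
The plan is to obtain Corollary \ref{main_reslut} as a direct specialization of Theorem \ref{main_res}, feeding in the concrete fractional Sobolev exponents supplied by the 2D regularity theory of Section \ref{sec:background}. First I would record the regularity. Under the stated hypotheses on $\omega$ and $y_d$, Theorem \ref{thm:regularity} together with Corollary \ref{cor:regularity_flux} gives, for every $r < \min\{3, r_\Omega\}$, that $y \in H^{r-1}(\Omega)$, $z \in H^{r}(\Omega)$, $\bm q \in H^{r-2}(\Omega)$, and $\bm p \in H^{r-1}(\Omega)$. Accordingly, the regularity exponents entering Theorem \ref{main_res} may be taken to be $r_y = r-1$, $r_z = r$, $r_{\bm q} = r-2$, and $r_{\bm p} = r-1$.

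Second, I would evaluate the capped exponents $s_y, s_z, s_{\bm q}, s_{\bm p}$ for the choice $k = 1$, for which the relevant caps are $k+2 = 3$ for the scalar variables and $k+1 = 2$ for the flux variables. The key observation is that, because $r < 3$ in the present range of angles, each regularity exponent lies strictly below its cap:
\[ r_y = r-1 < 2, \quad r_z = r < 3, \quad r_{\bm q} = r-2 < 1, \quad r_{\bm p} = r-1 < 2. \]
Hence no cap is active, and $s_y = r-1$, $s_z = r$, $s_{\bm q} = r-2$, $s_{\bm p} = r-1$.

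Third, I would substitute these values into the five bounds of Theorem \ref{main_res}. The crucial feature is that, after substitution, the four $h$-powers in each bound collapse to a single exponent. For the control bound, for instance,
\[ s_{\bm p} - \tfrac12 = s_z - \tfrac32 = s_{\bm q} + \tfrac12 = s_y - \tfrac12 = r - \tfrac32, \]
so all four terms carry the common factor $h^{r-3/2}$; the identical computation gives $h^{r-3/2}$ for the $y$, $\bm p$, and $z$ bounds. For the flux $\bm q$ one instead checks $s_{\bm p} - 1 = s_z - 2 = s_{\bm q} = s_y - 1 = r - 2$, yielding the common factor $h^{r-2}$. Collecting the norms then produces exactly the stated estimates.

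This argument is entirely elementary, and there is no genuine obstacle beyond verifying that the polynomial-degree caps are inactive and that the exponent arithmetic aligns across the four terms. The only substantive point worth recording is that the hypothesis $\omega < 2\pi/3$ forces $r_\Omega > 5/2$, so that $r$ may be chosen with $r - \tfrac32 > 1$; this is precisely what produces the superlinear convergence rate for the control advertised in the introduction.
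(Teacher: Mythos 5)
Your proposal is correct and is essentially the paper's own (implicit) argument: the paper derives the corollary by exactly this specialization, taking $r_y=r-1$, $r_z=r$, $r_{\bm q}=r-2$, $r_{\bm p}=r-1$ from Theorem \ref{thm:regularity} and Corollary \ref{cor:regularity_flux}, noting that with $k=1$ and $r<3$ none of the caps $k+2=3$, $k+1=2$ is active, and substituting into Theorem \ref{main_res} so that the four exponents collapse to $r-\tfrac32$ (resp.\ $r-2$ for $\bm q$). Your closing remark that $\omega<2\pi/3$ forces $r_\Omega>5/2$, so that $r_{\bm q}=r-2>1/2$ is attainable and the standing regularity assumption \eqref{eqn:regularity1}--\eqref{eqn:regularity2} of Theorem \ref{main_res} is satisfied, is precisely the point the paper makes in Section \ref{sec:background}.
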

Note that $ \min\{ 3, r_\Omega \} $ is always greater than 5/2, which guarantees a superlinear convergence rate for all variables except $ \bm{q} $.  Also, if $ \Omega $ is a rectangle (i.e., $ \omega = \pi/2 $) and $y_d\in H^1(\Omega) \cap L^\infty(\Omega)$, then $ r_\Omega = 3 $ and we obtain an $ O(h^{3/2 - \varepsilon}) $ convergence rate for $ u $, $ y $, $ z $, and $ \bm{p} $, and an $ O(h^{1 - \varepsilon}) $ convergence rate for $ \bm{q} $ for any $ \varepsilon > 0 $.

\subsection{Preliminary material}
\label{sec:Projectionoperator}

Before we prove the main result, we discuss $ L^2 $ projections, an HDG operator $ \mathscr B $, and the well-posedness of the HDG equations.

We first define the standard $L^2$ projections $\bm\Pi :  [L^2(\Omega)]^d \to \bm V_h$, $\Pi :  L^2(\Omega) \to W_h$, and $P_M:  L^2(\varepsilon_h) \to M_h$, which satisfy
\begin{equation}\label{L2_projection}
\begin{split}
(\bm\Pi \bm q,\bm r)_{K} &= (\bm q,\bm r)_{K} ,\qquad \forall \bm r\in [{\mathcal P}_{k}(K)]^d,\\
(\Pi u,w)_{K}  &= (u,w)_{K} ,\qquad \forall w\in \mathcal P_{k+1}(K),\\
\left\langle P_M m, \mu\right\rangle_{ e} &= \left\langle  m, \mu\right\rangle_{e }, \quad\;\;\; \forall \mu\in \mathcal P_{k}(e).
\end{split}
\end{equation}
In the analysis, we use the following classical results:
\begin{subequations}\label{classical_ine}
	\begin{align}
	\norm {\bm q -\bm{\Pi q}}_{\mathcal T_h} &\le Ch^{s_{\bm q}} \norm{\bm q}_{s_{\bm q},\Omega},\qquad\norm {y -{\Pi y}}_{\mathcal T_h} \le Ch^{s_{y}} \norm{y}_{s_{y},\Omega},\\
	\norm {y -{\Pi y}}_{\partial\mathcal T_h} &\le Ch^{s_{y}-\frac 1 2} \norm{y}_{s_{y},\Omega},
	\quad\norm {\bm q\cdot \bm n -\bm{\Pi q}\cdot \bm n}_{\partial \mathcal T_h} \le Ch^{s_{\bm q}-\frac 12} \norm{\bm q}_{s_{\bm q},\Omega},\\
	\norm {w}_{\partial \mathcal T_h} &\le Ch^{-\frac 12} \norm {w}_{ \mathcal T_h}, \quad\quad \forall w\in W_h,
	\end{align}
\end{subequations}
where $ s_{\bm q} $ and $ s_y $ are defined in Theorem \ref{main_res}.  We have the same projection error bounds for $\bm p$ and $z$.

To shorten lengthy equations, we define the HDG operator $ \mathscr B $ as follows:
\begin{align}
\hspace{2em}&\hspace{-2em}\mathscr  B( \bm q_h,y_h,\widehat y_h^o;\bm r_1,w_1,\mu_1)\\
&=(\bm{q}_h, \bm{r_1})_{{\mathcal{T}_h}}- (y_h, \nabla\cdot \bm{r_1})_{{\mathcal{T}_h}}+\langle \widehat{y}_h^o, \bm{r_1}\cdot \bm{n} \rangle_{\partial{{\mathcal{T}_h}}\backslash {\varepsilon_h^{\partial}}}\nonumber\\
& \quad - (\bm{q}_h, \nabla w_1)_{{\mathcal{T}_h}}+\langle\bm q_h\cdot\bm n +h^{-1} P_My_h , w_1 \rangle_{\partial{{\mathcal{T}_h}}} \nonumber\\
& \quad - \langle h^{-1} \widehat y_h^o, w_1 \rangle_{\partial{{\mathcal{T}_h}}\backslash \varepsilon_h^{\partial}} - \langle\bm q_h\cdot\bm n + h^{-1} (P_My_h-\widehat y_h^o), \mu_1 \rangle_{\partial{{\mathcal{T}_h}}\backslash\varepsilon_h^{\partial}}.\label{def_B}
\end{align}
By the definition of $ \mathscr  B $, we can rewrite the HDG formulation of the optimality system \eqref{HDG_discrete2} as follows: find $({\bm{q}}_h,{\bm{p}}_h,y_h,z_h,\widehat y_h^o,\widehat z_h^o,u_h)\in \bm{V}_h\times\bm{V}_h\times W_h \times W_h\times M_h(o)\times M_h(o)\times M_h(\partial)$  such that
\begin{subequations}\label{HDG_full_discrete}
	\begin{align}
	\mathscr B (\bm q_h,y_h,\widehat y_h^o;\bm r_1,w_1,\mu_1) &= -\langle u_h, \bm{r_1}\cdot \bm{n} - h^{-1} w_1 \rangle_{{\varepsilon_h^{\partial}}}+(f, w_1)_{{\mathcal{T}_h}}, \label{HDG_full_discrete_a}\\
	\mathscr B(\bm p_h,z_h,\widehat z_h^o;\bm r_2,w_2,\mu_2) &= (y_h -y_d,w_2)_{\mathcal T_h},\label{HDG_full_discrete_b}\\
	\gamma^{-1}\langle {\bm{p}}_h\cdot \bm{n} + h^{-1}P_M z_h, \mu_3\rangle_{{{\varepsilon_h^{\partial}}}} &= -\langle u_h, \mu_3 \rangle_{{\varepsilon_h^{\partial}}}, \label{HDG_full_discrete_e}
	\end{align}
\end{subequations}
for all $\left(\bm{r}_1, \bm{r}_2, w_1, w_2, \mu_1, \mu_2, {\mu}_3\right)\in \bm V_h\times\bm V_h\times W_h\times W_h\times M_h(o)\times M_h(o)\times M_h(\partial) $.

Next, we present a basic property of the operator $ \mathscr B $ and show the HDG equations \eqref{HDG_full_discrete} have a unique solution.
\begin{lemma}\label{energy_norm}
	For any $ ( \bm v_h, w_h, \mu_h ) \in \bm V_h \times W_h \times M_h $, we have
	\begin{align*}
	\mathscr B (\bm v_h, w_h, \mu_h;\bm v_h, w_h, \mu_h) &= (\bm{v}_h, \bm{v}_h)_{{\mathcal{T}_h}}+\langle h^{-1} (P_Mw_h- \mu_h) , P_M w_h- \mu_h\rangle_{\partial{{\mathcal{T}_h}}\backslash\varepsilon_h^{\partial}}\\
	& \quad + \langle h^{-1} P_Mw_h, P_Mw_h \rangle_{\varepsilon_h^{\partial}}.
	\end{align*}
\end{lemma}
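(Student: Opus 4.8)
The plan is to establish this energy identity by substituting the diagonal argument $(\bm q_h, y_h, \widehat y_h^o) = (\bm r_1, w_1, \mu_1) = (\bm v_h, w_h, \mu_h)$ directly into the definition \eqref{def_B} of $\mathscr B$ and then simplifying the resulting terms by integration by parts and the defining property of $P_M$. The seven terms fall naturally into three groups: the reaction-type volume term $(\bm v_h, \bm v_h)_{\mathcal T_h}$ together with the two volume products $(w_h, \nabla\cdot\bm v_h)_{\mathcal T_h}$ and $(\bm v_h, \nabla w_h)_{\mathcal T_h}$; the boundary terms coupling $\bm v_h\cdot\bm n$ with $w_h$ and with $\mu_h$; and the $h^{-1}$ stabilization terms. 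I expect the first group to collapse by the divergence theorem, the second to cancel outright, and the third to recombine into the two stabilization squares on the right-hand side.

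First I would apply the element-wise divergence theorem, $(w_h, \nabla\cdot\bm v_h)_K + (\bm v_h, \nabla w_h)_K = \langle w_h, \bm v_h\cdot\bm n\rangle_{\partial K}$, and sum over $K \in \mathcal T_h$. This converts the two volume products into $-\langle w_h, \bm v_h\cdot\bm n\rangle_{\partial\mathcal T_h}$, which exactly cancels the $\langle \bm v_h\cdot\bm n, w_h\rangle_{\partial\mathcal T_h}$ contribution arising from the fifth term of \eqref{def_B}. Next, the term $\langle \mu_h, \bm v_h\cdot\bm n\rangle_{\partial\mathcal T_h\setminus\varepsilon_h^\partial}$ cancels the $-\langle \bm v_h\cdot\bm n, \mu_h\rangle_{\partial\mathcal T_h\setminus\varepsilon_h^\partial}$ piece of the last term. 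After these cancellations only $(\bm v_h, \bm v_h)_{\mathcal T_h}$ and the purely $h^{-1}$ contributions survive.

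The remaining work, and the only place where any care is needed, is to split the leftover stabilization terms over $\varepsilon_h^\partial$ and $\partial\mathcal T_h\setminus\varepsilon_h^\partial$ and to complete the squares using the orthogonality of $P_M$. On the boundary faces $\varepsilon_h^\partial$ the lone surviving term is $\langle h^{-1} P_M w_h, w_h\rangle_{\varepsilon_h^\partial}$; since $P_M w_h \in M_h$, the defining identity \eqref{L2_projection} gives $\langle P_M w_h, w_h\rangle_e = \langle P_M w_h, P_M w_h\rangle_e$, producing the last term of the claimed identity. On the interior skeleton the leftover terms combine into $\langle h^{-1}(P_M w_h - \mu_h), w_h - \mu_h\rangle_{\partial\mathcal T_h\setminus\varepsilon_h^\partial}$; because $P_M w_h - \mu_h \in M_h$, applying \eqref{L2_projection} once more replaces $w_h$ by $P_M w_h$ in the second slot and yields $\langle h^{-1}(P_M w_h - \mu_h), P_M w_h - \mu_h\rangle_{\partial\mathcal T_h\setminus\varepsilon_h^\partial}$. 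Assembling the three surviving groups gives the stated formula. The main obstacle is purely bookkeeping: tracking which boundary integrals live on all of $\partial\mathcal T_h$ versus only the interior skeleton $\partial\mathcal T_h\setminus\varepsilon_h^\partial$, so that the projection identity is invoked on the correct subset of faces and no stray $w_h$-$\mu_h$ cross terms are left behind.
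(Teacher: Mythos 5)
Your proposal is correct and follows essentially the same route as the paper's proof: substitute the diagonal argument into \eqref{def_B}, cancel the $(\bm v_h,\nabla w_h)$ and $(w_h,\nabla\cdot\bm v_h)$ terms against $\langle \bm v_h\cdot\bm n, w_h\rangle_{\partial\mathcal T_h}$ via element-wise integration by parts, cancel the $\mu_h$ cross terms, and complete the two stabilization squares using the $L^2$-orthogonality of $P_M$ from \eqref{L2_projection}. Your bookkeeping of which terms live on $\varepsilon_h^\partial$ versus $\partial\mathcal T_h\backslash\varepsilon_h^\partial$ is exactly right, and the intermediate identity $\langle h^{-1}(P_Mw_h-\mu_h), w_h-\mu_h\rangle_{\partial\mathcal T_h\backslash\varepsilon_h^\partial}$ matches the paper's penultimate line.
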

\begin{proof}
	By the definition of $ \mathscr B $ in \eqref{def_B}, we have
	\begin{align*}
	\hspace{2em}&\hspace{-2em}  \mathscr B (\bm v_h, w_h, \mu_h;\bm v_h, w_h, \mu_h) \\
	&=(\bm{v}_h, \bm{v}_h)_{{\mathcal{T}_h}}- (w_h, \nabla\cdot \bm{v}_h)_{{\mathcal{T}_h}}+\langle \mu_h, \bm{v}_h\cdot \bm{n} \rangle_{\partial{{\mathcal{T}_h}}\backslash {\varepsilon_h^{\partial}}}- (\bm{v}_h, \nabla w_h)_{{\mathcal{T}_h}}\\
	& \quad +\langle\bm v_h\cdot\bm n +h^{-1} P_Mw_h , w_h \rangle_{\partial{{\mathcal{T}_h}}} - \langle h^{-1}   \mu_h, w_h \rangle_{\partial{{\mathcal{T}_h}}\backslash \varepsilon_h^{\partial}}\\
	& \quad - \langle\bm v_h\cdot\bm n + h^{-1} (P_Mw_h-\mu_h), \mu_h \rangle_{\partial{{\mathcal{T}_h}}\backslash\varepsilon_h^{\partial}}\\
	&=(\bm{v}_h, \bm{v}_h)_{{\mathcal{T}_h}}+\langle h^{-1} P_Mw_h , w_h \rangle_{\partial{{\mathcal{T}_h}}} - \langle h^{-1}\mu_h, w_h \rangle_{\partial{{\mathcal{T}_h}}\backslash \varepsilon_h^{\partial}}\\
	& \quad  - \langle  h^{-1} (P_Mw_h-\mu_h), \mu_h \rangle_{\partial{{\mathcal{T}_h}}\backslash\varepsilon_h^{\partial}}\\
	&=(\bm{v}_h, \bm{v}_h)_{{\mathcal{T}_h}}+\langle h^{-1} (P_Mw_h-\mu_h, P_Mw_h-\mu_h \rangle_{\partial{{\mathcal{T}_h}}\backslash\varepsilon_h^{\partial}} + \langle h^{-1} P_Mw_h, P_Mw_h \rangle_{\varepsilon_h^{\partial}}.
	\end{align*}
\end{proof}
\begin{proposition}\label{eq_B}
	There exists a unique solution of the HDG equations \eqref{HDG_full_discrete}.
\end{proposition}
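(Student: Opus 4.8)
The plan is to exploit that \eqref{HDG_full_discrete} is a square linear system over finite-dimensional spaces, so existence and uniqueness are equivalent; it therefore suffices to show that the homogeneous problem admits only the trivial solution. So I would set $f = 0$ and $y_d = 0$ and aim to prove that $(\bm q_h, \bm p_h, y_h, z_h, \widehat y_h^o, \widehat z_h^o, u_h)$ all vanish. The key structural fact to establish is a discrete reciprocity (a discrete Green's identity) linking the state and adjoint solvers.

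To obtain it, I would test the state equation \eqref{HDG_full_discrete_a} with the adjoint triple $(\bm p_h, z_h, \widehat z_h^o)$ and the adjoint equation \eqref{HDG_full_discrete_b} with the state triple $(\bm q_h, y_h, \widehat y_h^o)$. Unwinding the definition \eqref{def_B} of $\mathscr B$ and integrating by parts element-by-element, the volume terms and the interior-face contributions on $\partial\mathcal T_h \backslash \varepsilon_h^\partial$ should cancel. This cancellation relies on the definition of $P_M$ (so that $P_M$ may be freely inserted or removed against any trace in $M_h$) together with the flux-conservation relations \eqref{HDG_discrete2_e} and \eqref{HDG_discrete2_g}, which are automatically invoked because we test against $\widehat z_h^o, \widehat y_h^o \in M_h(o)$. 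What should survive is only the boundary contribution on $\varepsilon_h^\partial$, namely
\[ \norm{y_h}^2_{\mathcal T_h} = \langle u_h, \widehat{\bm p}_h \cdot \bm n \rangle_{\varepsilon_h^\partial}, \]
where $\widehat{\bm p}_h \cdot \bm n = \bm p_h \cdot \bm n + h^{-1} P_M z_h$ on $\varepsilon_h^\partial$ by \eqref{HDG_discrete2_k}. I would then test the optimality equation \eqref{HDG_full_discrete_e} with $\mu_3 = u_h$, giving $\langle u_h, \widehat{\bm p}_h \cdot \bm n \rangle_{\varepsilon_h^\partial} = -\gamma \norm{u_h}^2_{\varepsilon_h^\partial}$. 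Substituting into the reciprocity identity yields $\norm{y_h}^2_{\mathcal T_h} + \gamma \norm{u_h}^2_{\varepsilon_h^\partial} = 0$, and since $\gamma > 0$ this forces $y_h = 0$ and $u_h = 0$.

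Finally I would propagate these vanishing quantities back through the equations using Lemma \ref{energy_norm}. With $u_h = 0$, testing \eqref{HDG_full_discrete_a} against $(\bm q_h, y_h, \widehat y_h^o)$ and applying Lemma \ref{energy_norm} makes $\norm{\bm q_h}^2_{\mathcal T_h}$ plus nonnegative stabilization terms equal to zero, so $\bm q_h = 0$ and the stabilization vanishes; the flux equation then forces $\widehat y_h^o = 0$. With $y_h = 0$, the same energy identity applied to \eqref{HDG_full_discrete_b} gives $\bm p_h = 0$ and vanishing stabilization, after which the standard HDG uniqueness argument — testing the adjoint flux equation against $\nabla z_h \in \bm V_h$ and using that $P_M z_h = \widehat z_h^o$ on interior faces and $P_M z_h = 0$ on $\varepsilon_h^\partial$ — yields $\nabla z_h = 0$ and hence $z_h = 0$ and $\widehat z_h^o = 0$ on the connected mesh. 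This completes uniqueness, and therefore existence.

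I expect the reciprocity step to be the main obstacle. Note that $\mathscr B$ is \emph{not} self-adjoint, so the identity is not a formal transpose of $\mathscr B$; rather, equating the two cross-tested expressions genuinely requires the scalar and conservation equations, and the delicate bookkeeping lies in verifying that every interior-face term on $\partial\mathcal T_h \backslash \varepsilon_h^\partial$ cancels and only the single $\varepsilon_h^\partial$ term remains. Keeping careful track of the $P_M$ projections and of which faces carry $\widehat y_h^o, \widehat z_h^o$ versus $u_h$ is where the care is needed; the remaining steps are routine applications of Lemma \ref{energy_norm} and a standard HDG energy argument.
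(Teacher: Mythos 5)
Your proposal is correct and takes essentially the same route as the paper's proof: reduce to the homogeneous square system, derive the discrete reciprocity identity $\norm{y_h}_{\mathcal T_h}^2 + \gamma\norm{u_h}_{\varepsilon_h^\partial}^2 = 0$ by cross-testing the state and adjoint equations and invoking \eqref{HDG_full_discrete_e}, then use Lemma \ref{energy_norm} to kill $\bm q_h$, $\bm p_h$, $\widehat y_h^o$ and the stabilization traces, and finish with the $\bm r_2 = \nabla z_h$ test to obtain $z_h = 0$ and $\widehat z_h^o = 0$. The only cosmetic difference is that the paper organizes the cross-testing with the sign-flipped triples $(\bm p_h,-z_h,-\widehat z_h^o)$ and $(-\bm q_h,y_h,\widehat y_h^o)$, for which the sum of the two $\mathscr B$-terms vanishes identically by integration by parts and the $P_M$ properties alone, thereby avoiding the leftover volume terms (of the form $(\bm p_h,\nabla y_h)$ and $(\bm q_h,\nabla z_h)$) that your unsigned choice produces and that you would indeed need the component flux equations to eliminate, exactly the bookkeeping you flag as the delicate step.
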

\begin{proof}
	Since the system \eqref{HDG_full_discrete} is finite dimensional, we only need to prove the uniqueness.  Therefore, we assume $y_d = f = 0$ and we show the system \eqref{HDG_full_discrete} only has the trivial solution.
	
	First, by the definition of $ \mathscr B $, we have
	\begin{align*}
	\hspace{1em}&\hspace{-1em}\mathscr B(\bm q_h,y_h,\widehat y_h^o;\bm p_h,-z_h,-\widehat z_h^o) + \mathscr B (\bm p_h,z_h,\widehat z_h^o;-\bm q_h,y_h,\widehat y_h^o)\\
	&=(\bm{q}_h, \bm p_h)_{{\mathcal{T}_h}}- (y_h, \nabla\cdot \bm p_h)_{{\mathcal{T}_h}}+\langle \widehat{y}_h^o, \bm p_h\cdot \bm{n} \rangle_{\partial{{\mathcal{T}_h}}\backslash {\varepsilon_h^{\partial}}} + (\bm{q}_h, \nabla z_h)_{{\mathcal{T}_h}}\\
	& \quad - \langle\bm q_h\cdot\bm n +h^{-1} P_My_h , z_h \rangle_{\partial{{\mathcal{T}_h}}} + \langle h^{-1} \widehat y_h^o, z_h \rangle_{\partial{{\mathcal{T}_h}}\backslash \varepsilon_h^{\partial}}\\
	& \quad + \langle\bm q_h\cdot\bm n + h^{-1} (P_My_h-\widehat y_h^o), \widehat z_h^o  \rangle_{\partial{{\mathcal{T}_h}}\backslash\varepsilon_h^{\partial}}-(\bm{p}_h, \bm q_h)_{{\mathcal{T}_h}}+ (z_h, \nabla\cdot \bm q_h)_{{\mathcal{T}_h}}\\
	&\quad -\langle \widehat{z}_h^o, \bm q_h \cdot \bm{n} \rangle_{\partial{{\mathcal{T}_h}}\backslash {\varepsilon_h^{\partial}}}  - (\bm{p}_h, \nabla y_h)_{{\mathcal{T}_h}}+\langle\bm p_h\cdot\bm n +h^{-1} P_Mz_h , y_h \rangle_{\partial{{\mathcal{T}_h}}}\\
	& \quad - \langle h^{-1} \widehat z_h^o, y_h  \rangle_{\partial{{\mathcal{T}_h}}\backslash \varepsilon_h^{\partial}} - \langle\bm p_h\cdot\bm n + h^{-1} (P_Mz_h-\widehat z_h^o), \widehat y_h^o \rangle_{\partial{{\mathcal{T}_h}}\backslash\varepsilon_h^{\partial}}.
	\end{align*}
	Integrating by parts and using the properties of $P_M$ in \eqref{L2_projection} gives
	\begin{align*}
	\mathscr B & (\bm q_h,y_h,\widehat y_h^o;\bm p_h,-z_h,-\widehat z_h^o) + \mathscr B (\bm p_h,z_h,\widehat z_h^o;-\bm q_h,y_h,\widehat y_h^o) = 0.
	\end{align*}
	
	Next, take $(\bm r_1,w_1,\mu_1) = (\bm p_h,-z_h,-\widehat z_h^o)$, $(\bm r_2,w_2,\mu_2) = (-\bm q_h,y_h,\widehat y_h^o)$,  and $\mu_3 = -\gamma u_h$ in the HDG equations \eqref{HDG_full_discrete_a},  \eqref{HDG_full_discrete_b}, and \eqref{HDG_full_discrete_e}, respectively, and sum to obtain
	\begin{align*}
	(y_h,y_h)_{\mathcal T_h} + \gamma \norm {u_h}^2_{\varepsilon_h^\partial} = 0.
	\end{align*}
	This implies $y_h = 0$ and $u_h = 0$ since $\gamma>0$.
	
	Next, taking $(\bm r_1,w_1,\mu_1) = (\bm q_h,y_h,\widehat y_h^o)$ and $(\bm r_2,w_2,\mu_2) = (\bm p_h,z_h,\widehat z_h^o)$ in Lemma \ref{energy_norm} gives $\bm q_h= \bm p_h= \bm 0, \widehat y_h^o =0$,  $P_M z_h = 0$ on $\varepsilon_h^{\partial}$, and $P_M z_h - \widehat z_h^o=0$ on $\partial\mathcal T_h\backslash \varepsilon_h^{\partial}$.  Also, since $\widehat z_h=0$ on $\varepsilon_h^{\partial}$ we have
	\begin{align}\label{uniqu}
	P_M z_h-\widehat z_h =0.
	\end{align}
	Substituting \eqref{uniqu} into \eqref{HDG_discrete2_c}, and remembering again $\widehat z_h = 0$ on $\varepsilon_h^\partial$, we get
	\begin{align*}
	- (z_h, \nabla\cdot \bm{r_2})_{{\mathcal{T}_h}}+\langle P_M z_h, \bm{r_2}\cdot \bm{n} \rangle_{\partial{{\mathcal{T}_h}}} =0.
	\end{align*}
	Use the property of $P_M$ in \eqref{L2_projection}, integrate by parts, and take $\bm r_2 = \nabla z_h$ to obtain
	\begin{align*}
	(\nabla z_h, \nabla z_h)_{{\mathcal{T}_h}}=0.
	\end{align*}
	Thus, $z_h$ is  constant on each $K\in\mathcal T_h$, and also $ z_h = P_M z_h = \widehat z_h $ on $\partial\mathcal T_h $.  Since $\widehat z_h=0$ on $\varepsilon_h^{\partial}$ and single valued on each face, we have $z_h=0$ on each $K\in\mathcal T_h$, and therefore also $ \widehat{z}_h^o = 0 $.
\end{proof}

\subsection{Proof of Main Result}

To prove the main result, we follow a similar strategy taken by Gong and Yan \cite{MR2806572}, see also \cite{MR2114385,MR2398768,MR2679576}, and introduce an auxiliary problem with the approximate control $ u_h $ in \eqref{HDG_full_discrete_a} replaced by a projection of the exact optimal control.  We first bound the error between the solutions of the auxiliary problem and the mixed weak form \eqref{boundary_pro_a}-\eqref{boundary_pro_d} of the optimality system.  The we bound the error between the solutions of the auxiliary problem and the HDG problem \eqref{HDG_full_discrete}.  A simple application of the triangle inequality then gives a bound on the error between the solutions of the HDG problem and then mixed form of the optimality system.

The precise form of the auxiliary problem is given as follows: find $({\bm{q}}_h(u),\\{\bm{p}}_h(u),y_h(u),z_h(u),{\widehat{y}}_h^o(u),{\widehat{z}}_h^o(u))\in \bm{V}_h\times\bm{V}_h\times W_h \times W_h\times M_h(o)\times M_h(o)$ such that
\begin{subequations}\label{HDG_u}
	\begin{align}
	\mathscr B (\bm q_h(u),y_h(u),\widehat y_h^o(u);\bm r_1,w_1,\mu_1) &= -\langle P_M u, \bm{r_1}\cdot \bm{n} - h^{-1} w_1 \rangle_{{\varepsilon_h^{\partial}}}+(f, w_1)_{{\mathcal{T}_h}}, \label{HDG_u_a}\\
	\mathscr B(\bm p_h(u),z_h(u),\widehat z_h^o(u);\bm r_2,w_2,\mu_2) &= (y_h(u) -y_d,w_2)_{\mathcal T_h}.\label{HDG_u_b}
	\end{align}
\end{subequations}
for all $\left(\bm{r}_1, \bm{r}_2, w_1, w_2, \mu_1, \mu_2\right)\in \bm V_h\times\bm V_h\times W_h\times W_h\times M_h(o)\times M_h(o)$.

We split the proof of the main result, Theorem \ref{main_res}, in 7 steps.  We begin by bounding the error between the solutions of the auxiliary problem and the mixed form \eqref{boundary_pro_a}-\eqref{boundary_pro_d} of the optimality system.  We split the errors in the variables using the $ L^2 $ projections.  In steps 1-3, we focus on the primary variables, i.e., the state $ y $ and the flux $ \bm{q} $, and we use the following notation:
\begin{equation}\label{notation_1}
\begin{split}
\delta^{\bm q} &=\bm q-{\bm\Pi}\bm q,  \qquad\qquad\qquad \qquad\qquad\qquad\;\;\;\;\varepsilon^{\bm q}_h={\bm\Pi} \bm q-\bm q_h(u),\\
\delta^y&=y- {\Pi} y, \qquad\qquad\qquad \qquad\qquad\qquad\;\;\;\; \;\varepsilon^{y}_h={\Pi} y-y_h(u),\\
\delta^{\widehat y} &= y-P_My,  \qquad\qquad\qquad\qquad\qquad\qquad \;\;\; \varepsilon^{\widehat y}_h=P_M y-\widehat{y}_h(u),\\
\widehat {\bm\delta}_1 &= \delta^{\bm q}\cdot\bm n+h^{-1} P_M \delta^y,  \qquad\qquad\qquad\qquad\quad
\widehat {\bm \varepsilon }_1= \varepsilon_h^{\bm q}\cdot\bm n+h^{-1}(P_M\varepsilon^y_h-\varepsilon_h^{\widehat y}),
\end{split}
\end{equation}
where $\widehat y_h(u) = \widehat y_h^o(u)$ on $\varepsilon_h^o$ and $\widehat y_h(u) = P_M u$ on $\varepsilon_h^{\partial}$.  Note that this implies $\varepsilon_h^{\widehat y} = 0$ on $\varepsilon_h^{\partial}$.

\subsubsection{Step 1: The error equation for part 1 of the auxiliary problem \eqref{HDG_u_a}}
\label{subsec:proof_step1}

\begin{lemma}\label{lemma:step1_first_lemma}
	We have
	\begin{align}\label{error_equation_L2k1}
	\mathscr B(\varepsilon_h^{\bm q},\varepsilon_h^{ y}, \varepsilon_h^{\widehat y}; \bm r_1, w_1, \mu_1) = -\langle \widehat {\bm\delta}_1, w_1\rangle_{\partial\mathcal T_h}+\langle\widehat {\bm\delta}_1 ,\mu_1\rangle_{\partial\mathcal T_h\backslash\varepsilon^{\partial}_h}.
	\end{align}
\end{lemma}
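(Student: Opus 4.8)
The plan is to obtain \eqref{error_equation_L2k1} as a consequence of the \emph{consistency} of the scheme. Concretely, I would first show that inserting the $L^2$ projections $(\bm\Pi\bm q,\Pi y,P_M y)$ of the exact primary variables into the first three slots of $\mathscr B$ reproduces the right-hand side of the auxiliary problem \eqref{HDG_u_a} up to exactly the projection-error terms appearing in \eqref{error_equation_L2k1}; that is, $\mathscr B(\bm\Pi\bm q,\Pi y,P_M y;\bm r_1,w_1,\mu_1)$ equals the right-hand side of \eqref{HDG_u_a} plus $-\langle\widehat{\bm\delta}_1,w_1\rangle_{\partial\mathcal T_h}+\langle\widehat{\bm\delta}_1,\mu_1\rangle_{\partial\mathcal T_h\backslash\varepsilon_h^\partial}$. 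Granting this identity, the lemma is immediate: since $\mathscr B$ is linear in its first three arguments, the notation \eqref{notation_1} gives $\mathscr B(\varepsilon_h^{\bm q},\varepsilon_h^y,\varepsilon_h^{\widehat y};\cdot)=\mathscr B(\bm\Pi\bm q,\Pi y,P_M y;\cdot)-\mathscr B(\bm q_h(u),y_h(u),\widehat y_h^o(u);\cdot)$, and subtracting \eqref{HDG_u_a} cancels the common auxiliary right-hand side, leaving precisely \eqref{error_equation_L2k1}.

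To establish the consistency identity I would expand $\mathscr B(\bm\Pi\bm q,\Pi y,P_M y;\bm r_1,w_1,\mu_1)$ from the definition \eqref{def_B} and process the $\bm r_1$, $w_1$, and $\mu_1$ terms separately, repeatedly using the defining properties \eqref{L2_projection} to trade projected factors for exact ones whenever the companion factor is a polynomial of admissible degree (for instance $(\bm\Pi\bm q,\nabla w_1)_{\mathcal T_h}=(\bm q,\nabla w_1)_{\mathcal T_h}$ because $\nabla w_1\in[\mathcal P^k]^d$, and $(\Pi y,\nabla\cdot\bm r_1)_{\mathcal T_h}=(y,\nabla\cdot\bm r_1)_{\mathcal T_h}$). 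For the $\bm r_1$ group I would integrate by parts and use $\bm q=-\nabla y$ to collapse the volume contributions, leaving only $-\langle y,\bm r_1\cdot\bm n\rangle_{\varepsilon_h^\partial}$, which by the boundary condition $y=u$ on $\Gamma$ and the $L^2(e)$-optimality of $P_M$ becomes $-\langle P_M u,\bm r_1\cdot\bm n\rangle_{\varepsilon_h^\partial}$, matching \eqref{HDG_u_a}. For the $w_1$ group I would integrate $-(\bm q,\nabla w_1)_{\mathcal T_h}$ by parts, substitute $\nabla\cdot\bm q=f$ (valid a.e.\ by \eqref{mixed_b}), and regroup the normal-flux and $h^{-1}$-stabilization contributions so that they reassemble into $\widehat{\bm\delta}_1$.

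For the $\mu_1$ group the key structural fact is that $\bm q\in H(\mathrm{div},\Omega)$ has single-valued normal trace across interior faces, so $\langle\bm q\cdot\bm n,\mu_1\rangle_{\partial\mathcal T_h\backslash\varepsilon_h^\partial}=0$ for every single-valued $\mu_1\in M_h(o)$; inserting this zero lets me rewrite the surviving flux and stabilization terms as $\langle\widehat{\bm\delta}_1,\mu_1\rangle_{\partial\mathcal T_h\backslash\varepsilon_h^\partial}$. I expect the main obstacle to be the boundary-face bookkeeping in the $w_1$ group: the stabilization term carries $P_M(\Pi y)$ rather than $P_M y$ on $\varepsilon_h^\partial$, and reconciling it with the term $\langle h^{-1}P_M u,w_1\rangle_{\varepsilon_h^\partial}$ of \eqref{HDG_u_a} rests on the identity $P_M\Pi y=P_M u-P_M\delta^y$ on $\varepsilon_h^\partial$, which follows from $y=u$ on $\Gamma$ together with $P_M\delta^y=P_M y-P_M\Pi y$. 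Maintaining the interior/boundary splitting of $\partial\mathcal T_h$ consistently throughout — so that the $w_1$ contributions recombine over the \emph{full} $\partial\mathcal T_h$ while the $\mu_1$ contributions remain confined to interior faces — is the single place where a face-set or sign slip is easiest to commit, so I would carry out that split explicitly rather than suppress it.
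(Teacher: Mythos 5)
Your proposal is correct and follows essentially the same route as the paper: the paper likewise expands $\mathscr B(\bm\Pi\bm q,\Pi y,P_M y;\bm r_1,w_1,\mu_1)$, uses the $L^2$-projection properties \eqref{L2_projection} to replace projected factors by exact ones, invokes the exact-solution identities (equivalent to your integration by parts with $\bm q=-\nabla y$, $\nabla\cdot\bm q=f$, $y=u$ on $\Gamma$, and the single-valued normal trace of $\bm q\in H(\mathrm{div},\Omega)$ on interior faces), and then subtracts \eqref{HDG_u_a} to leave exactly $-\langle\widehat{\bm\delta}_1,w_1\rangle_{\partial\mathcal T_h}+\langle\widehat{\bm\delta}_1,\mu_1\rangle_{\partial\mathcal T_h\backslash\varepsilon_h^\partial}$. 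Your boundary-face bookkeeping, including $P_M\Pi y=P_Mu-P_M\delta^y$ on $\varepsilon_h^\partial$ and the cancellation $\langle u-P_Mu,\bm r_1\cdot\bm n\rangle_{\varepsilon_h^\partial}=0$, matches the paper's computation.
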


\begin{proof}
	By the definition of the operator $ \mathscr B $ in \eqref{def_B}, we have
	\begin{align*}
	\hspace{2em}&\hspace{-2em}\mathscr B(\bm \Pi {\bm q},\Pi { y}, P_M  y; \bm r_1, w_1, \mu_1)\\
	&= (\bm \Pi {\bm q}, \bm{r_1})_{{\mathcal{T}_h}}- (\Pi { y}, \nabla\cdot \bm{r_1})_{{\mathcal{T}_h}}+\langle P_M  y, \bm{r_1}\cdot \bm{n} \rangle_{\partial{{\mathcal{T}_h}}\backslash {\varepsilon_h^{\partial}}}\\
	& \quad - (\bm \Pi {\bm q}, \nabla w_1)_{{\mathcal{T}_h}}+\langle \bm \Pi {\bm q}\cdot\bm n +h^{-1} P_M\Pi { y} , w_1 \rangle_{\partial{{\mathcal{T}_h}}} \\
	& \quad - \langle h^{-1} P_M  y, w_1 \rangle_{\partial{{\mathcal{T}_h}}\backslash \varepsilon_h^{\partial}} - \langle\bm \Pi {\bm q}\cdot\bm n - h^{-1} P_M \delta^y, \mu_1 \rangle_{\partial{{\mathcal{T}_h}}\backslash\varepsilon_h^{\partial}}.
	\end{align*}
	By properties of the $ L^2 $ projections \eqref{L2_projection}, we have
	\begin{align*}
	\mathscr B(\bm \Pi {\bm q},\Pi { y}, P_M  y; \bm r_1, w_1, \mu_1) &= ( {\bm q}, \bm{r_1})_{{\mathcal{T}_h}}- ({ y}, \nabla\cdot \bm{r_1})_{{\mathcal{T}_h}}+\langle   y, \bm{r_1}\cdot \bm{n} \rangle_{\partial{{\mathcal{T}_h}}\backslash {\varepsilon_h^{\partial}}}\\
	& \quad - ( {\bm q}, \nabla w_1)_{{\mathcal{T}_h}}+\langle {\bm q}\cdot\bm n, w_1 \rangle_{\partial{{\mathcal{T}_h}}} - \langle \delta^{\bm q}\cdot\bm n, w_1 \rangle_{\partial{{\mathcal{T}_h}}}\\
	& \quad +\langle h^{-1} P_M\Pi { y} , w_1 \rangle_{\partial{{\mathcal{T}_h}}} - \langle h^{-1} P_M  y, w_1 \rangle_{\partial{{\mathcal{T}_h}}\backslash \varepsilon_h^{\partial}}\\
	& \quad - \langle {\bm q}\cdot\bm n, \mu_1 \rangle_{\partial{{\mathcal{T}_h}}\backslash\varepsilon_h^{\partial}} + \langle \delta^{\bm q}\cdot\bm n, \mu_1 \rangle_{\partial{{\mathcal{T}_h}}\backslash\varepsilon_h^{\partial}}\\
	& \quad +\langle h^{-1} P_M \delta^y, \mu_1 \rangle_{\partial{{\mathcal{T}_h}}\backslash\varepsilon_h^{\partial}}.
	\end{align*}
	Note that the exact state $ y $ and exact flux $\bm{q}$ satisfy
	\begin{align*}
	(\bm{q},\bm{r}_1)_{\mathcal{T}_h}-(y,\nabla\cdot \bm{r}_1)_{\mathcal{T}_h}+\left\langle{y},\bm r_1\cdot \bm n \right\rangle_{\partial {\mathcal{T}_h}\backslash \varepsilon_h^{\partial}} &= -\left\langle u,\bm r_1\cdot \bm n \right\rangle_{\varepsilon_h^{\partial}},\\
	-(\bm{q},\nabla w_1)_{\mathcal{T}_h}+\left\langle {\bm{q}}\cdot \bm{n},w_1\right\rangle_{\partial {\mathcal{T}_h}} &= (f,w_1)_{\mathcal{T}_h},\\
	\left\langle {\bm{q}}\cdot \bm{n},\mu_1\right\rangle_{\partial {\mathcal{T}_h}\backslash \varepsilon_h^{\partial}}&=0,
	\end{align*}
	for all $(\bm{r}_1,w_1,\mu_1)\in\bm{V}_h\times W_h\times M_h(o)$. Then we have
	\begin{align*}
	\mathscr B(\bm \Pi {\bm q},\Pi { y}, P_M  y; \bm r_1, w_1, \mu_1)=&-\left\langle u,\bm r_1\cdot \bm n \right\rangle_{\varepsilon_h^{\partial}} + (f,w_1)_{\mathcal T_h} - \langle \delta^{\bm q}\cdot\bm n, w_1 \rangle_{\partial{{\mathcal{T}_h}}}\\
	&+\langle h^{-1} P_M\Pi { y} , w_1 \rangle_{\partial{{\mathcal{T}_h}}} - \langle h^{-1} P_M  y, w_1 \rangle_{\partial{{\mathcal{T}_h}}\backslash \varepsilon_h^{\partial}}\\
	&+ \langle \delta^{\bm q}\cdot\bm n, \mu_1 \rangle_{\partial{{\mathcal{T}_h}}\backslash\varepsilon_h^{\partial}}+ \langle h^{-1} P_M\delta^y, \mu_1 \rangle_{\partial{{\mathcal{T}_h}}\backslash\varepsilon_h^{\partial}}.
	\end{align*}
	Subtract part 1 of the auxiliary problem \eqref{HDG_u_a} from the above equality to obtain the result:
	\begin{align*}
	\mathscr B(\varepsilon_h^{\bm q},\varepsilon_h^{ y}, \varepsilon_h^{\widehat y}; \bm r_1, w_1, \mu_1)  = &-\langle P_M u, h^{-1} w_1 \rangle_{{\varepsilon_h^{\partial}}} - \langle \delta^{\bm q}\cdot\bm n, w_1 \rangle_{\partial{{\mathcal{T}_h}}}\\
	&+\langle h^{-1} P_M\Pi { y} , w_1 \rangle_{\partial{{\mathcal{T}_h}}} - \langle h^{-1} P_M  y, w_1 \rangle_{\partial{{\mathcal{T}_h}}\backslash \varepsilon_h^{\partial}}\\
	&+ \langle \delta^{\bm q}\cdot\bm n, \mu_1 \rangle_{\partial{{\mathcal{T}_h}}\backslash\varepsilon_h^{\partial}}+ \langle h^{-1} P_M\delta^y, \mu_1 \rangle_{\partial{{\mathcal{T}_h}}\backslash\varepsilon_h^{\partial}}\\
	= &  -\langle \widehat {\bm\delta}_1, w_1\rangle_{\partial\mathcal T_h}+\langle\widehat {\bm\delta}_1 ,\mu_1\rangle_{\partial\mathcal T_h\backslash\varepsilon^{\partial}_h}.
	\end{align*}
\end{proof}

\subsubsection{Step 2: Estimate for $\varepsilon_h^{\boldmath q}$}  We first provide a key inequality which was proven in \cite{MR3440284}.
\begin{lemma}\label{nabla_ine}
	We have
	\begin{equation*}
	\|\nabla\varepsilon^y_h\|_{\mathcal T_h}+ h^{-\frac{1}{2}} \| {\varepsilon_h^y -\varepsilon_h^{\widehat y}}\|_{\partial \mathcal T_h}
	\lesssim \|\varepsilon^{\bm q}_h\|_{\mathcal T_h}+h^{-\frac1 2}\|P_M\varepsilon^y_h-\varepsilon^{\widehat y}_h\|_{\partial\mathcal T_h}.
	\end{equation*}
\end{lemma}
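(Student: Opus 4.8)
The inequality cannot hold for an arbitrary discrete triple (taking $\varepsilon_h^{\bm q}=0$ and $\varepsilon_h^{\widehat y}=P_M\varepsilon_h^y$ would force the right-hand side to control $\norm{\nabla\varepsilon_h^y}_{\mathcal T_h}$ with nothing on the left to match it), so the first thing I would pin down is the linear relation that these particular error functions obey. That relation is free from Lemma~\ref{lemma:step1_first_lemma}: setting $w_1=0$ and $\mu_1=0$ there makes the right-hand side vanish, and by the definition of $\mathscr B$ in \eqref{def_B} this reads
\[ (\varepsilon_h^{\bm q}, \bm r_1)_{\mathcal T_h} - (\varepsilon_h^{y}, \nabla\cdot\bm r_1)_{\mathcal T_h} + \langle \varepsilon_h^{\widehat y}, \bm r_1\cdot\bm n\rangle_{\partial\mathcal T_h\backslash\varepsilon_h^\partial} = 0 \qquad \forall\,\bm r_1\in\bm V_h. \]
Integrating by parts element-by-element and using $\varepsilon_h^{\widehat y}=0$ on $\varepsilon_h^\partial$ (recorded just after \eqref{notation_1}) converts this into the gradient identity
\[ (\varepsilon_h^{\bm q}+\nabla\varepsilon_h^{y}, \bm r_1)_{\mathcal T_h} = \langle \varepsilon_h^{y}-\varepsilon_h^{\widehat y}, \bm r_1\cdot\bm n\rangle_{\partial\mathcal T_h} \qquad \forall\,\bm r_1\in\bm V_h. \]
The whole argument hinges on the Lehrenfeld--Sch\"oberl choice of spaces: since $\varepsilon_h^{y}\in W_h$ is one polynomial degree higher than $\bm V_h$, its piecewise gradient $\nabla\varepsilon_h^{y}$ lies in $\bm V_h$ and is an admissible test function.

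To bound the gradient term I would take $\bm r_1=\nabla\varepsilon_h^{y}$ above, giving
\[ \norm{\nabla\varepsilon_h^{y}}_{\mathcal T_h}^2 = -(\varepsilon_h^{\bm q}, \nabla\varepsilon_h^{y})_{\mathcal T_h} + \langle \varepsilon_h^{y}-\varepsilon_h^{\widehat y}, \nabla\varepsilon_h^{y}\cdot\bm n\rangle_{\partial\mathcal T_h}. \]
On each face $\nabla\varepsilon_h^{y}\cdot\bm n\in\mathcal P_k$, so by the defining property of $P_M$ in \eqref{L2_projection} I may replace $\varepsilon_h^{y}$ by $P_M\varepsilon_h^{y}$ in the boundary pairing (and $\varepsilon_h^{\widehat y}\in M_h$ is untouched). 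Applying Cauchy--Schwarz together with the element-wise discrete trace inequality $\norm{\nabla\varepsilon_h^{y}}_{\partial\mathcal T_h}\lesssim h^{-1/2}\norm{\nabla\varepsilon_h^{y}}_{\mathcal T_h}$ and cancelling one factor of $\norm{\nabla\varepsilon_h^{y}}_{\mathcal T_h}$ yields
\[ \norm{\nabla\varepsilon_h^{y}}_{\mathcal T_h}\lesssim \norm{\varepsilon_h^{\bm q}}_{\mathcal T_h}+h^{-1/2}\norm{P_M\varepsilon_h^{y}-\varepsilon_h^{\widehat y}}_{\partial\mathcal T_h}, \]
which is the first half of the asserted bound.

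For the remaining jump term $h^{-1/2}\norm{\varepsilon_h^{y}-\varepsilon_h^{\widehat y}}_{\partial\mathcal T_h}$, the plan is to split $\varepsilon_h^{y}-\varepsilon_h^{\widehat y}=(I-P_M)\varepsilon_h^{y}+(P_M\varepsilon_h^{y}-\varepsilon_h^{\widehat y})$ by the triangle inequality. The second piece is already on the right-hand side. For the first, since $I-P_M$ annihilates constants on each face, I would write $(I-P_M)\varepsilon_h^{y}=(I-P_M)(\varepsilon_h^{y}-\bar\varepsilon)$ with $\bar\varepsilon$ the mean of $\varepsilon_h^{y}$ on $K$, bound its $L^2(\partial K)$ norm by $\norm{\varepsilon_h^{y}-\bar\varepsilon}_{\partial K}$, and then use a trace inequality followed by Poincar\'e ($\norm{\varepsilon_h^{y}-\bar\varepsilon}_K\lesssim h\norm{\nabla\varepsilon_h^{y}}_K$) to get $\norm{(I-P_M)\varepsilon_h^{y}}_{\partial K}\lesssim h^{1/2}\norm{\nabla\varepsilon_h^{y}}_K$. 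Hence $h^{-1/2}\norm{(I-P_M)\varepsilon_h^{y}}_{\partial\mathcal T_h}\lesssim\norm{\nabla\varepsilon_h^{y}}_{\mathcal T_h}$, which the previous step already controls. Adding the two estimates gives the lemma.

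The routine ingredients (Cauchy--Schwarz, the discrete trace inequality, the Poincar\'e/scaling bound on $I-P_M$) are standard; the two points that really carry the proof, and where I would be most careful, are (i) identifying the correct constraint on the error triple—namely the gradient relation extracted from Lemma~\ref{lemma:step1_first_lemma} with $w_1=\mu_1=0$—without which the inequality is simply false, and (ii) exploiting that $\nabla\varepsilon_h^{y}\in\bm V_h$ with normal trace in $\mathcal P_k$, so that it is both a legitimate test function and invisible to $P_M$ on the faces. Both are direct consequences of the one-degree-higher scalar space, so the main conceptual obstacle is recognizing that this space choice is exactly what makes the estimate go through.
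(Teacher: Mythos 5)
Your proof is correct, but note how it relates to the paper: the authors do not prove Lemma \ref{nabla_ine} at all --- they state it as ``a key inequality which was proven in \cite{MR3440284}'' --- so your argument is a self-contained reconstruction rather than a parallel of anything in the text. It is the right reconstruction. Your opening observation is accurate and worth making: the inequality is false for a generic triple in $\bm V_h\times W_h\times M_h$ (a continuous piecewise-linear $\varepsilon_h^y$ vanishing on $\Gamma$, with $\varepsilon_h^{\bm q}=0$ and $\varepsilon_h^{\widehat y}$ its face trace projection, kills the right-hand side but not $\norm{\nabla\varepsilon_h^y}_{\mathcal T_h}$), so the constraint extracted from Lemma \ref{lemma:step1_first_lemma} with $w_1=\mu_1=0$, namely
\begin{equation*}
(\varepsilon_h^{\bm q}+\nabla\varepsilon_h^{y}, \bm r_1)_{\mathcal T_h} = \langle \varepsilon_h^{y}-\varepsilon_h^{\widehat y}, \bm r_1\cdot\bm n\rangle_{\partial\mathcal T_h} \quad \forall\,\bm r_1\in\bm V_h,
\end{equation*}
is indeed the essential input. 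Your two key mechanisms are exactly what makes the Lehrenfeld--Sch\"oberl variant \cite{Lehrenfeld10} work and are consistent with the paper's own toolbox (the same test choice $\bm r_2=\nabla z_h$ appears in the uniqueness proof of Proposition \ref{eq_B}): $\nabla\varepsilon_h^y\in\bm V_h$ because $W_h$ is one degree higher, and $\nabla\varepsilon_h^y\cdot\bm n\in\mathcal P^k(e)$ on each face so that $P_M$ can be inserted for free --- the latter using that $\bm n$ is constant per face, which holds for the simplicial meshes used here. The jump estimate via $(I-P_M)\varepsilon_h^y=(I-P_M)(\varepsilon_h^y-\bar\varepsilon)$, a trace inequality, and Poincar\'e--Wirtinger is standard and valid under the paper's quasi-uniformity assumption. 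One small remark for completeness: the paper later reuses this lemma for $(z,\bm p,\widehat z)$ in Lemma \ref{lemma:step5_main_lemma} and for the $\zeta$-variables in Step 7; your derivation transfers verbatim, since in the $\zeta$ case the error equation \eqref{eq_yh_yhu} with $w_1=\mu_1=0$ produces a boundary term $-\langle P_Mu-u_h,\bm r_1\cdot\bm n\rangle_{\varepsilon_h^\partial}$ that exactly merges with $\zeta_{\widehat y}=P_Mu-u_h$ on $\varepsilon_h^\partial$, restoring the same identity over all of $\partial\mathcal T_h$.
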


\begin{lemma}\label{lemma:step2_main_lemma}
	We have
	\begin{align}
	\norm{\varepsilon_h^{\bm{q}}}_{\mathcal{T}_h}^2+h^{-1}\|{P_M\varepsilon_h^y-\varepsilon_h^{\widehat{y}}}\|_{\partial \mathcal T_h}^2 \lesssim h^{2s_{\bm q}}\norm{\bm q}_{s^{\bm q},\Omega}^2 + h^{2s_{y}-2}\norm{y}_{s^{y},\Omega}^2.
	\end{align}
\end{lemma}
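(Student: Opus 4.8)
The plan is to test the error equation \eqref{error_equation_L2k1} from Lemma \ref{lemma:step1_first_lemma} against the error itself and exploit the coercivity identity in Lemma \ref{energy_norm}. Concretely, I would take $(\bm r_1, w_1, \mu_1) = (\varepsilon_h^{\bm q}, \varepsilon_h^{y}, \varepsilon_h^{\widehat y})$ in \eqref{error_equation_L2k1}. By Lemma \ref{energy_norm}, together with the fact noted after \eqref{notation_1} that $\varepsilon_h^{\widehat y} = 0$ on $\varepsilon_h^\partial$ (so the boundary and interior stabilization terms merge), the left-hand side collapses to precisely the quantity we wish to bound,
\[ \mathscr B(\varepsilon_h^{\bm q}, \varepsilon_h^{y}, \varepsilon_h^{\widehat y}; \varepsilon_h^{\bm q}, \varepsilon_h^{y}, \varepsilon_h^{\widehat y}) = \norm{\varepsilon_h^{\bm q}}_{\mathcal T_h}^2 + h^{-1}\norm{P_M\varepsilon_h^{y} - \varepsilon_h^{\widehat y}}_{\partial\mathcal T_h}^2. \]
On the right-hand side, using $\varepsilon_h^{\widehat y} = 0$ on $\varepsilon_h^\partial$ once more I would consolidate the two boundary pairings into the single term $-\langle \widehat{\bm\delta}_1, \varepsilon_h^{y} - \varepsilon_h^{\widehat y}\rangle_{\partial\mathcal T_h}$.

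Next I would split $\widehat{\bm\delta}_1 = \delta^{\bm q}\cdot\bm n + h^{-1}P_M\delta^y$ and treat the two pieces separately. For the stabilization piece, self-adjointness of $P_M$ rewrites $\langle h^{-1}P_M\delta^y, \varepsilon_h^{y} - \varepsilon_h^{\widehat y}\rangle_{\partial\mathcal T_h}$ as $\langle h^{-1}P_M\delta^y, P_M\varepsilon_h^{y} - \varepsilon_h^{\widehat y}\rangle_{\partial\mathcal T_h}$, which Cauchy--Schwarz bounds by $h^{-1}\norm{P_M\delta^y}_{\partial\mathcal T_h}\norm{P_M\varepsilon_h^{y} - \varepsilon_h^{\widehat y}}_{\partial\mathcal T_h}$ --- a product in which one factor is directly one of the two terms of the left-hand side. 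The consistency piece $\langle \delta^{\bm q}\cdot\bm n, \varepsilon_h^{y} - \varepsilon_h^{\widehat y}\rangle_{\partial\mathcal T_h}$ is the crux of the argument: it pairs against $\varepsilon_h^{y} - \varepsilon_h^{\widehat y}$, which is \emph{not} one of the two quantities controlled by the energy norm, so it cannot be absorbed directly. This is exactly where Lemma \ref{nabla_ine} enters. It gives
\[ \norm{\varepsilon_h^{y} - \varepsilon_h^{\widehat y}}_{\partial\mathcal T_h} \lesssim h^{1/2}\norm{\varepsilon_h^{\bm q}}_{\mathcal T_h} + \norm{P_M\varepsilon_h^{y} - \varepsilon_h^{\widehat y}}_{\partial\mathcal T_h}, \]
so that the consistency piece, too, is controlled by factors of the left-hand side.

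Collecting the estimates and applying Young's inequality to absorb every occurrence of $\norm{\varepsilon_h^{\bm q}}_{\mathcal T_h}$ and $h^{-1/2}\norm{P_M\varepsilon_h^{y} - \varepsilon_h^{\widehat y}}_{\partial\mathcal T_h}$ into the left-hand side, I expect to arrive at
\[ \norm{\varepsilon_h^{\bm q}}_{\mathcal T_h}^2 + h^{-1}\norm{P_M\varepsilon_h^{y} - \varepsilon_h^{\widehat y}}_{\partial\mathcal T_h}^2 \lesssim h\norm{\delta^{\bm q}\cdot\bm n}_{\partial\mathcal T_h}^2 + h^{-1}\norm{P_M\delta^y}_{\partial\mathcal T_h}^2. \]
I would then finish with the classical projection estimates \eqref{classical_ine}: $\norm{\delta^{\bm q}\cdot\bm n}_{\partial\mathcal T_h} \lesssim h^{s_{\bm q}-1/2}\norm{\bm q}_{s_{\bm q},\Omega}$ yields $h\norm{\delta^{\bm q}\cdot\bm n}_{\partial\mathcal T_h}^2 \lesssim h^{2s_{\bm q}}\norm{\bm q}_{s_{\bm q},\Omega}^2$, while $\norm{P_M\delta^y}_{\partial\mathcal T_h} \le \norm{\delta^y}_{\partial\mathcal T_h} \lesssim h^{s_{y}-1/2}\norm{y}_{s_{y},\Omega}$ yields $h^{-1}\norm{P_M\delta^y}_{\partial\mathcal T_h}^2 \lesssim h^{2s_{y}-2}\norm{y}_{s_{y},\Omega}^2$, which is the claimed bound. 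The main obstacle is the consistency term $\langle \delta^{\bm q}\cdot\bm n, \varepsilon_h^{y} - \varepsilon_h^{\widehat y}\rangle_{\partial\mathcal T_h}$, and the whole argument hinges on invoking Lemma \ref{nabla_ine} to convert $\norm{\varepsilon_h^{y} - \varepsilon_h^{\widehat y}}_{\partial\mathcal T_h}$ into quantities appearing in the energy norm before applying Young's inequality.
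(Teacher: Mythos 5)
Your proposal is correct and follows essentially the same path as the paper's proof: test the error equation \eqref{error_equation_L2k1} with $(\varepsilon_h^{\bm q},\varepsilon_h^{y},\varepsilon_h^{\widehat y})$, use Lemma \ref{energy_norm} with $\varepsilon_h^{\widehat y}=0$ on $\varepsilon_h^\partial$, split $\widehat{\bm\delta}_1$, invoke Lemma \ref{nabla_ine} to control $\|\varepsilon_h^y-\varepsilon_h^{\widehat y}\|_{\partial\mathcal T_h}$, absorb via Young's inequality, and finish with \eqref{classical_ine}. The only cosmetic difference is that you retain $\|P_M\delta^y\|_{\partial\mathcal T_h}$ where the paper bounds it by $\|\delta^y\|_{\partial\mathcal T_h}$, which is immaterial.
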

\begin{proof}
	First, since $\varepsilon_h^{\widehat y}=0$ on $\varepsilon_h^\partial$, the basic property of $ \mathscr B $ in Lemma \ref{energy_norm} gives
	\[ \mathscr B(\varepsilon_h^{\bm q},\varepsilon_h^{ y}, \varepsilon_h^{\widehat y}; \varepsilon_h^{\bm q},\varepsilon_h^{ y}, \varepsilon_h^{\widehat y}) =(\varepsilon_h^{\bm{q}},\varepsilon_h^{\bm{q}})_{\mathcal{T}_h}+h^{-1} \|{P_M\varepsilon_h^y-\varepsilon_h^{\widehat{y}}}\|_{\partial \mathcal T_h}^2. \]
	Then, taking $(\bm r_1, w_1,\mu_1) = (\bm \varepsilon_h^{\bm q},\varepsilon_h^y,\varepsilon_h^{\widehat y})$ in \eqref{error_equation_L2k1} in Lemma \ref{lemma:step1_first_lemma} gives
	\begin{align*}
	\hspace{4em}&\hspace{-4em}(\varepsilon_h^{\bm{q}},\varepsilon_h^{\bm{q}})_{\mathcal{T}_h}+h^{-1} \|{P_M\varepsilon_h^y-\varepsilon_h^{\widehat{y}}}\|_{\partial \mathcal T_h}^2\\
	&= -\langle \widehat {\bm\delta}_1,\varepsilon_h^y - \varepsilon_h^{\widehat y}\rangle_{\partial\mathcal T_h}\\
	&= -\langle\delta^{\bm q}\cdot \bm n,\varepsilon_h^y - \varepsilon_h^{\widehat y} \rangle_{\partial\mathcal T_h}- h^{-1} \langle \delta^y,P_M \varepsilon_h^y - \varepsilon_h^{\widehat y}\rangle_{\partial\mathcal T_h}\\
	&\le \norm{\delta^{\bm q}}_{\partial \mathcal T_h}\| {\varepsilon_h^y - \varepsilon_h^{\widehat y}}\|_{\partial \mathcal T_h} + h^{-1}
	\|{\delta ^y}\|_{\partial\mathcal T_h} \|{P_M \varepsilon_h^y - \varepsilon_h^{\widehat y}}\|_{\partial\mathcal T_h}\\
	&\le h^{1/2}\norm{\delta^{\bm q}}_{\partial \mathcal T_h}h^{-1/2}\| {\varepsilon_h^y - \varepsilon_h^{\widehat y}}\|_{\partial \mathcal T_h}  +h^{-1}
	\|{\delta ^y}\|_{\partial\mathcal T_h} \|{P_M \varepsilon_h^y - \varepsilon_h^{\widehat y}}\|_{\partial\mathcal T_h}.
	\end{align*}
	By Young's inequality and Lemma \ref{nabla_ine}, we obtain
	\begin{align*}
	\|\varepsilon_h^{\bm{q}}\|_{\mathcal{T}_h}^2+h^{-1}\|{P_M\varepsilon_h^y-\varepsilon_h^{\widehat{y}}}\|_{\partial \mathcal T_h}^2 & \lesssim h\norm{\delta^{\bm q}}_{\partial\mathcal T_h}^2 + h^{-1}
	\norm{\delta^{y}}_{\partial\mathcal T_h}^2\\
	&\lesssim h^{2s_{\bm q}}\norm{\bm q}_{s^{\bm q},\Omega}^2 + h^{2s_{y}-2}\norm{y}_{s^{y},\Omega}^2.
	\end{align*}
\end{proof}
%
%

\subsubsection{Step 3: Estimate for $\varepsilon_h^{y}$ by a duality argument}  Next, we introduce the dual problem for any given $\Theta$ in $L^2(\Omega):$
\begin{equation}\label{Dual_PDE}
\begin{split}
\bm{\Phi}+\nabla\Psi &= 0\qquad\qquad\text{in}\ \Omega,\\
\nabla\cdot\bm \Phi &= \Theta\qquad\quad~~\text{in}\ \Omega,\\
\Psi &= 0\qquad\qquad\text{on}\ \Gamma.
\end{split}
\end{equation}
Since the domain $\Omega$ is convex, we have the following regularity estimate
\begin{align}\label{reg_e}
\norm{\bm\Phi}_{H^1(\Omega)} + \norm{\Psi}_{H^2(\Omega)} \le C \norm{\Theta}_\Omega.
\end{align}

Before we estimate $\varepsilon_h^{y}$ we introduce the following notation, which is similar to the earlier notation in \eqref{notation_1}:
\begin{align}
\delta^{\bm \Phi} &=\bm \Phi-{\bm\Pi}\bm \Phi, \quad \delta^\Psi=\Psi- {\Pi} \Psi, \quad
\delta^{\widehat \Psi} = \Psi-P_M\Psi.
\end{align}
By the regularity estimate \eqref{reg_e}, we have the following bounds:
\begin{align}
\|\delta^{\bm \Phi} \|_{\mathcal T_h} \lesssim h\|\Theta\|_{\mathcal T_h},\quad \|\delta^{ \Psi} \|_{\mathcal T_h} \lesssim h^2\|\Theta\|_{\mathcal T_h},\quad \|\delta^{ \widehat \Psi} \|_{\partial\mathcal T_h} \lesssim h^{\frac 1 2}\|\Theta\|_{\mathcal T_h}.
\end{align}

\begin{lemma}\label{lemma:step3_first_lemma}
	We have
	\begin{align}
	\|\varepsilon_h^y\|_{\mathcal T_h} \lesssim h^{s_{\bm q}+1}\norm{\bm q}_{s^{\bm q},\Omega} + h^{s_{y}}\norm{y}_{s^{y},\Omega}.
	\end{align}
\end{lemma}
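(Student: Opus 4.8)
The plan is an Aubin--Nitsche duality argument based on the dual problem \eqref{Dual_PDE} with the specific datum $\Theta = \varepsilon_h^y$. With this choice $\nabla\cdot\bm\Phi = \varepsilon_h^y$ and $\bm\Phi = -\nabla\Psi$, and the regularity estimate \eqref{reg_e} together with the stated projection bounds gives $\norm{\delta^{\bm\Phi}}_{\mathcal T_h}\lesssim h\norm{\varepsilon_h^y}_{\mathcal T_h}$, $\norm{\delta^\Psi}_{\mathcal T_h}\lesssim h^2\norm{\varepsilon_h^y}_{\mathcal T_h}$, and $\norm{\delta^{\widehat\Psi}}_{\partial\mathcal T_h}\lesssim h^{1/2}\norm{\varepsilon_h^y}_{\mathcal T_h}$. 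I would start from
\[ \norm{\varepsilon_h^y}^2_{\mathcal T_h} = (\varepsilon_h^y, \nabla\cdot\bm\Phi)_{\mathcal T_h}, \]
and aim to rewrite the right-hand side so that, after integration by parts, every surviving term factors through one of the dual projection errors $\delta^{\bm\Phi}, \delta^\Psi, \delta^{\widehat\Psi}$. Dividing by $\norm{\varepsilon_h^y}_{\mathcal T_h} = \norm{\Theta}_{\mathcal T_h}$ at the end should then buy exactly one extra power of $h$ over the energy estimate of Lemma \ref{lemma:step2_main_lemma}.

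The mechanism is to evaluate $\mathscr B(\varepsilon_h^{\bm q}, \varepsilon_h^y, \varepsilon_h^{\widehat y}; \bm\Pi\bm\Phi, \Pi\Psi, P_M\Psi)$ in two ways. On one hand, Lemma \ref{lemma:step1_first_lemma} with $(\bm r_1, w_1, \mu_1) = (\bm\Pi\bm\Phi, \Pi\Psi, P_M\Psi)$ expresses this quantity through $\widehat{\bm\delta}_1 = \delta^{\bm q}\cdot\bm n + h^{-1}P_M\delta^y$ paired against $\Pi\Psi$ and $P_M\Psi$ on the skeleton. On the other hand, expanding the same expression directly from the definition \eqref{def_B}, integrating by parts element by element, and using the dual equations $\bm\Phi + \nabla\Psi = 0$, $\nabla\cdot\bm\Phi = \varepsilon_h^y$, $\Psi = 0$ on $\Gamma$, together with the defining properties \eqref{L2_projection} of the $L^2$ projections, should isolate $(\varepsilon_h^y, \nabla\cdot\bm\Phi)_{\mathcal T_h} = \norm{\varepsilon_h^y}^2_{\mathcal T_h}$ as the leading term, the remaining contributions being $\varepsilon_h^{\bm q}$ or the stabilization quantity $P_M\varepsilon_h^y - \varepsilon_h^{\widehat y}$ paired against $\delta^{\bm\Phi}, \delta^\Psi, \delta^{\widehat\Psi}$. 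Equating the two evaluations and solving for $\norm{\varepsilon_h^y}^2_{\mathcal T_h}$ expresses it through these controllable quantities.

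I expect the main obstacle to be the boundary and stabilization terms, which cannot be bounded term by term without losing a power of $h$: a naive Cauchy--Schwarz bound of $\langle\delta^{\bm q}\cdot\bm n, \Pi\Psi - P_M\Psi\rangle_{\partial\mathcal T_h}$ costs $\norm{\delta^{\bm q}}_{\partial\mathcal T_h}\norm{\delta^{\widehat\Psi}}_{\partial\mathcal T_h}\lesssim h^{s_{\bm q}}\norm{\bm q}_{s_{\bm q},\Omega}\norm{\varepsilon_h^y}_{\mathcal T_h}$, which is short by one power because $\Psi$ is only $H^2$ on the convex domain. Such terms must instead be reorganized by integration by parts together with the $L^2$-orthogonality of $\delta^{\bm q} = \bm q - \bm\Pi\bm q$ against the polynomial spaces --- for instance $(\delta^{\bm q}, \nabla\Pi\Psi)_K = 0$ since $\nabla\Pi\Psi \in [\mathcal P_k(K)]^d$ --- and the skew structure of $\mathscr B$ already exploited in the proof of Proposition \ref{eq_B}, so that they cancel or are promoted to genuinely higher order. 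The decisive power-of-$h$ gain then survives only in the volume pairing $(\varepsilon_h^{\bm q}, \delta^{\bm\Phi})_{\mathcal T_h}\lesssim h\,\norm{\varepsilon_h^{\bm q}}_{\mathcal T_h}\norm{\varepsilon_h^y}_{\mathcal T_h}$.

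To finish, I would collect all terms, apply Cauchy--Schwarz, the projection bounds \eqref{classical_ine}, and the dual bounds above, factoring out a common $\norm{\varepsilon_h^y}_{\mathcal T_h}$. After dividing through, the quantity $\norm{\varepsilon_h^{\bm q}}_{\mathcal T_h} + h^{-1/2}\norm{P_M\varepsilon_h^y - \varepsilon_h^{\widehat y}}_{\partial\mathcal T_h}$ is controlled by Lemma \ref{lemma:step2_main_lemma} by $h^{s_{\bm q}}\norm{\bm q}_{s_{\bm q},\Omega} + h^{s_y - 1}\norm{y}_{s_y,\Omega}$; the extra factor of $h$ supplied by the dual errors upgrades this to the claimed estimate $\norm{\varepsilon_h^y}_{\mathcal T_h}\lesssim h^{s_{\bm q}+1}\norm{\bm q}_{s_{\bm q},\Omega} + h^{s_y}\norm{y}_{s_y,\Omega}$, while any purely primal leftovers in $\delta^{\bm q}, \delta^y$ are of equal or higher order by \eqref{classical_ine}.
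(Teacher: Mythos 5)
Your proposal is essentially the paper's own proof: the paper establishes Lemma~\ref{lemma:step3_first_lemma} by exactly this Aubin--Nitsche duality with $\Theta=\varepsilon_h^y$ in \eqref{Dual_PDE}, evaluating $\mathscr B(\varepsilon_h^{\bm q},\varepsilon_h^{y},\varepsilon_h^{\widehat y};-\bm\Pi\bm\Phi,\Pi\Psi,P_M\Psi)$ once from the definition \eqref{def_B} (integrating by parts and using $\bm\Phi+\nabla\Psi=0$ to isolate $\|\varepsilon_h^y\|_{\mathcal T_h}^2$) and once from Lemma~\ref{lemma:step1_first_lemma}, then using the face-by-face $P_M$-orthogonality (e.g.\ $\bm\Pi\bm q\cdot\bm n\in\mathcal P_k(e)$) together with $\bm q\in H(\mathrm{div},\Omega)$ and $\Psi=0$ on $\Gamma$ to replace $P_M\Psi$ by $\Psi$, so the dangerous skeleton term becomes $\langle\widehat{\bm\delta}_1,\delta^\Psi\rangle_{\partial\mathcal T_h}$ with $\|\delta^\Psi\|_{\partial\mathcal T_h}\lesssim h^{3/2}\|\varepsilon_h^y\|_{\mathcal T_h}$ --- precisely the reorganization you anticipate --- before concluding via Lemma~\ref{nabla_ine} and Lemma~\ref{lemma:step2_main_lemma}. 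One detail to fix when you execute the plan: the flux test function must be $-\bm\Pi\bm\Phi$, not $+\bm\Pi\bm\Phi$, since with the plus sign the two volume contributions add to $2(\varepsilon_h^{\bm q},\bm\Phi)_{\mathcal T_h}$ instead of cancelling (or reducing to $(\varepsilon_h^{\bm q},\delta^{\bm\Phi})_{\mathcal T_h}$, which in fact vanishes by orthogonality), and that uncancelled term is one power of $h$ short of the claimed rate.
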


\begin{proof}
	Consider the dual problem \eqref{Dual_PDE} and let $\Theta = \varepsilon_h^y$.  In the definition \eqref{def_B} of $ \mathscr B $, take $ (\bm r_1, w_1, \mu_1) $ to be $ (-\bm\Pi\bm \Phi,\Pi\Psi,P_M\Psi) $ and use $\Psi=0$ on $\varepsilon_h^{\partial}$ to obtain
	\begin{align}\label{eqn:dual_equations}
	\mathscr B(\varepsilon_h^{\bm q},\varepsilon_h^{ y}, \varepsilon_h^{\widehat y}; -\bm\Pi\bm \Phi,\Pi\Psi,P_M\Psi) =& -(\varepsilon_h^{\bm q},\bm\Pi\bm \Phi)_{\mathcal T_h}+(\varepsilon^y_h,\nabla\cdot\bm\Pi\bm \Phi)_{\mathcal T_h}-\langle\varepsilon^{\widehat y}_h,\bm\Pi\bm \Phi\cdot\bm n\rangle_{\partial\mathcal T_h}\nonumber\\
	&-(\varepsilon_h^{\bm q},\nabla\Pi\Psi)_{\mathcal T_h}+\langle\widehat{\bm \varepsilon}_1,\Pi\Psi\rangle_{\partial\mathcal T_h} - \langle\widehat{\bm \varepsilon}_1,P_M\Psi\rangle_{\partial\mathcal T_h}.
	\end{align}
	Next, it is easy to verify that
	\begin{align*}
	(\varepsilon^y_h,\nabla\cdot\bm\Pi \bm \Phi)_{\mathcal T_h} &= \langle\varepsilon^y_h,\bm\Pi\bm \Phi\cdot\bm n\rangle_{\partial\mathcal T_h}-(\nabla\varepsilon^y_h,\bm\Pi\bm \Phi)_{\mathcal T_h}\\
	&= \langle\varepsilon^y_h,\bm\Pi\bm \Phi\cdot\bm n\rangle_{\partial\mathcal T_h}-(\nabla\varepsilon^y_h,\bm \Phi)_{\mathcal T_h}\\
	&= -\langle\varepsilon^y_h,\delta^{\bm \Phi}\cdot\bm n\rangle_{\partial\mathcal T_h}+(\varepsilon^y_h,\nabla\cdot\bm \Phi)_{\mathcal T_h}\\
	&= -\langle\varepsilon^y_h,\delta^{\bm \Phi}\cdot\bm n\rangle_{\partial\mathcal T_h}+\|\varepsilon^y_h\|^2_{\mathcal T_h}.
	\end{align*}
	Similarly,
	\begin{align*}
	-(\varepsilon_h^{\bm q},\nabla\Pi\Psi)_{\mathcal T_h}
	&= -\langle\varepsilon_h^{\bm q}\cdot \bm n,\Pi \Psi\bm \rangle_{\partial\mathcal T_h}+(\nabla\cdot\varepsilon_h^{\bm q},\Pi\Psi)_{\mathcal T_h}\\
	&= -\langle\varepsilon_h^{\bm q}\cdot \bm n,\Pi \Psi\bm \rangle_{\partial\mathcal T_h}+(\nabla\cdot\varepsilon_h^{\bm q},\Psi)_{\mathcal T_h}\\
	&= -\langle\varepsilon_h^{\bm q}\cdot \bm n,\Pi \Psi\bm \rangle_{\partial\mathcal T_h} +\langle\varepsilon_h^{\bm q}\cdot \bm n,\Psi\bm \rangle_{\partial\mathcal T_h}-(\varepsilon_h^{\bm q},\nabla\Psi)_{\mathcal T_h} \\
	&= \langle\varepsilon_h^{\bm q}\cdot\bm n,(P_M\Psi-\Pi \Psi) \rangle_{\partial\mathcal T_h}-(\varepsilon_h^{\bm q},\nabla\Psi)_{\mathcal T_h}.
	\end{align*}
	Then equation \eqref{eqn:dual_equations} becomes
	\begin{align*}
	\hspace{2em}&\hspace{-2em} \mathscr B(\varepsilon_h^{\bm q},\varepsilon_h^{ y}, \varepsilon_h^{\widehat y}; -\bm\Pi\bm \Phi,\Pi\Psi,P_M\Psi)\\
	&= -(\varepsilon_h^{\bm q},\bm\Phi)_{\mathcal T_h}-\langle\varepsilon_h^y,\delta^{\bm \Phi}\cdot\bm n\rangle_{\partial\mathcal T_h}+\|\varepsilon_h^y\|^2_{\mathcal T_h}-\langle\varepsilon^{\widehat y}_h,\bm\Pi\bm \Phi\cdot\bm n\rangle_{\partial\mathcal T_h}\\
	&+\langle\varepsilon_h^{\bm q}\cdot \bm n, P_M\Psi-\Pi \Psi \rangle_{\partial\mathcal T_h}-(\varepsilon_h^{\bm q},\nabla\Psi)_{\mathcal T_h}+\langle\widehat{\bm \varepsilon}_1,\Pi \Psi\rangle_{\partial\mathcal T_h} - \langle\widehat{\bm \varepsilon}_1, P_M\Psi\rangle_{\partial\mathcal T_h}.
	\end{align*}
	The facts $\bm \Phi+\nabla\Psi=0$, $\langle\varepsilon^{\widehat y}_h,\bm \Phi\cdot\bm n\rangle_{\partial\mathcal T_h}=0$, and  $\langle\widehat{\bm \varepsilon}_1, P_M\Psi\rangle_{\partial\mathcal T_h} = \langle\widehat{\bm \varepsilon}_1, \Psi\rangle_{\partial\mathcal T_h}$ imply
	\begin{align*}
	\hspace{2em}&\hspace{-2em} \mathscr B(\varepsilon_h^{\bm q},\varepsilon_h^{ y}, \varepsilon_h^{\widehat y}; -\bm\Pi\bm \Phi,\Pi\Psi,P_M\Psi)\\
	&= -\langle\varepsilon_h^y - \varepsilon^{\widehat y}_h,\delta^{\bm \Phi}\cdot\bm n\rangle_{\partial\mathcal T_h}+\|\varepsilon_h^y\|^2_{\mathcal T_h} - h^{-1}\langle P_M\varepsilon^y_h-\varepsilon^{\widehat y}_h, \delta^{ \Psi} \rangle_{\partial\mathcal T_h}.
	\end{align*}
	On the other hand,  equation \eqref{error_equation_L2k1} in  Lemma \ref{lemma:step1_first_lemma} gives
	\begin{align*}
	\mathscr B(\varepsilon_h^{\bm q},\varepsilon_h^{ y}, \varepsilon_h^{\widehat y}; -\bm\Pi\bm \Phi,\Pi\Psi,P_M\Psi) =&-\langle \widehat {\bm\delta}_1, \Pi\Psi\rangle_{\partial\mathcal T_h}+\langle\widehat {\bm\delta}_1 ,P_M\Psi\rangle_{\partial\mathcal T_h\backslash\varepsilon^{\partial}_h}.
	\end{align*}
	Moreover,
	\begin{align*}
	\hspace{2em}&\hspace{-2em} \langle\widehat {\bm\delta}_1 ,P_M\Psi\rangle_{\partial\mathcal T_h\backslash\varepsilon^{\partial}_h}\\
	&= \langle \delta^{\bm q}\cdot\bm n+h^{-1} P_M \delta^y ,P_M\Psi\rangle_{\partial\mathcal T_h\backslash\varepsilon^{\partial}_h} \\
	&= \langle \bm q \cdot\bm n ,P_M\Psi\rangle_{\partial\mathcal T_h\backslash\varepsilon^{\partial}_h}
	- \langle \bm \Pi  \bm q\cdot\bm n ,P_M\Psi\rangle_{\partial\mathcal T_h\backslash\varepsilon^{\partial}_h}
	+\langle h^{-1} P_M \delta^y, P_M\Psi\rangle_{\partial\mathcal T_h\backslash\varepsilon^{\partial}_h} \\
	&= - \langle \bm \Pi  \bm q\cdot\bm n ,\Psi\rangle_{\partial\mathcal T_h\backslash\varepsilon^{\partial}_h}
	+\langle h^{-1} P_M \delta^y, \Psi\rangle_{\partial\mathcal T_h\backslash\varepsilon^{\partial}_h} \\
	&= \langle \bm q \cdot\bm n, \Psi\rangle_{\partial\mathcal T_h\backslash\varepsilon^{\partial}_h} - \langle \bm \Pi  \bm q\cdot\bm n ,\Psi\rangle_{\partial\mathcal T_h\backslash\varepsilon^{\partial}_h}
	+\langle h^{-1} P_M \delta^y, \Psi\rangle_{\partial\mathcal T_h\backslash\varepsilon^{\partial}_h} \\
	&= \langle\widehat {\bm\delta}_1 ,\Psi\rangle_{\partial\mathcal T_h\backslash\varepsilon^{\partial}_h}\\
	&= \langle\widehat {\bm\delta}_1 ,\Psi\rangle_{\partial\mathcal T_h},
	\end{align*}
	where we have used $ \langle \bm q \cdot \bm n,P_M \Psi\rangle_{\partial\mathcal T_h\backslash\varepsilon_h^\partial} = 0 $, $ \langle \bm q \cdot \bm n, \Psi\rangle_{\partial\mathcal T_h\backslash\varepsilon_h^\partial} =0 $ since $\bm q \in H(\text{div}, \Omega)$ and $\Psi = 0$ on $\varepsilon_h^\partial$.
	
	Comparing the above two equalities gives
	\begin{align*}
	\|\varepsilon^y_h\|^2_{\mathcal T_h}&=\langle\varepsilon^y_h-\varepsilon^{\widehat y}_h,\delta^{\bm \Phi}\cdot\bm n\rangle_{\partial\mathcal T_h}+h^{-1}\langle P_M\varepsilon^y_h-\varepsilon^{\widehat y}_h, \delta^{ \Psi} \rangle_{\partial\mathcal T_h}+\langle\widehat{\bm \delta}_1, \delta^{ \Psi} \rangle_{\partial\mathcal T_h}\\
	&=\langle\varepsilon^y_h-\varepsilon^{\widehat y}_h,\delta^{\bm \Phi}\cdot\bm n\rangle_{\partial\mathcal T_h}+h^{-1}\langle P_M\varepsilon^y_h-\varepsilon^{\widehat y}_h,\delta^{ \Psi}\rangle_{\partial\mathcal T_h}\\
	& \quad -\langle \delta^{\bm q}\cdot\bm n+h^{-1} P_M\delta^{ y},\delta^{ \Psi}\rangle_{\partial\mathcal T_h}\\
	&\lesssim h^{-\frac12}\|\varepsilon^y_h-\varepsilon^{\widehat y}_h\|_{\partial\mathcal T_h}\cdot h^{\frac 1 2}\|\delta^{ \bm \Phi}\|_{\partial\mathcal T_h}+h^{-\frac12}\|P_M\varepsilon^y_h-\varepsilon^{\widehat y}_h\|_{\partial\mathcal T_h}\cdot h^{-\frac12}\|\delta^{ \Psi}\|_{\partial\mathcal T_h}\\
	& \quad +\|\delta^{ \bm q}\|_{\partial\mathcal T_h}\cdot\|\delta^{ \Psi} \|_{\partial\mathcal T_h}+h^{-1}\|\delta^{y}\|_{\partial\mathcal T_h}\cdot\| \delta^{ \Psi} \|_{\partial\mathcal T_h}\\
	&\lesssim (h^{s_{\bm q}+1}\norm{\bm q}_{s^{\bm q},\Omega} + h^{s_{y}}\norm{y}_{s^{y},\Omega}) \|\varepsilon^y_h\|_{\mathcal T_h}.
	\end{align*}
\end{proof}

As a consequence of Lemma \ref{lemma:step2_main_lemma} and Lemma \ref{lemma:step3_first_lemma}, a simple application of the triangle inequality gives optimal convergence rates for $\|\bm q -\bm q_h(u)\|_{\mathcal T_h}$ and $\|y -y_h(u)\|_{\mathcal T_h}$:
\begin{lemma}\label{lemma:step3_conv_rates}
	\begin{subequations}
		\begin{align}
		\|\bm q -\bm q_h(u)\|_{\mathcal T_h}&\le \|\delta^{\bm q}\|_{\mathcal T_h} + \|\varepsilon_h^{\bm q}\|_{\mathcal T_h} \lesssim h^{s_{\bm q}}\norm{\bm q}_{s^{\bm q},\Omega} + h^{s_{y}-1}\norm{y}_{s^{y},\Omega},\\
		\|y -y_h(u)\|_{\mathcal T_h}&\le \|\delta^{y}\|_{\mathcal T_h} + \|\varepsilon_h^{y}\|_{\mathcal T_h} \lesssim h^{s_{\bm q}+1}\norm{\bm q}_{s^{\bm q},\Omega} + h^{s_{y}}\norm{y}_{s^{y},\Omega}.
		\end{align}
	\end{subequations}
\end{lemma}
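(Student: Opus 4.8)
The plan is to recognize that this lemma is simply the assembly of the two preceding estimates, so the proof reduces to an error splitting via the $L^2$ projections followed by the triangle inequality. First I would decompose each error into its projection part and its discrete part using the notation introduced in \eqref{notation_1}, writing $\bm q - \bm q_h(u) = \delta^{\bm q} + \varepsilon_h^{\bm q}$ and $y - y_h(u) = \delta^y + \varepsilon_h^y$. The triangle inequality then yields precisely the two middle inequalities already displayed in the statement, namely $\norm{\bm q - \bm q_h(u)}_{\mathcal T_h} \le \norm{\delta^{\bm q}}_{\mathcal T_h} + \norm{\varepsilon_h^{\bm q}}_{\mathcal T_h}$ and $\norm{y - y_h(u)}_{\mathcal T_h} \le \norm{\delta^y}_{\mathcal T_h} + \norm{\varepsilon_h^y}_{\mathcal T_h}$.

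Next I would bound each of the four terms on the right. The projection errors $\norm{\delta^{\bm q}}_{\mathcal T_h}$ and $\norm{\delta^y}_{\mathcal T_h}$ are controlled by the classical approximation bounds in \eqref{classical_ine}, giving $h^{s_{\bm q}}\norm{\bm q}_{s_{\bm q},\Omega}$ and $h^{s_y}\norm{y}_{s_y,\Omega}$ respectively. The discrete error $\norm{\varepsilon_h^{\bm q}}_{\mathcal T_h}$ is controlled by taking square roots in Lemma \ref{lemma:step2_main_lemma}, which produces $h^{s_{\bm q}}\norm{\bm q}_{s_{\bm q},\Omega} + h^{s_y - 1}\norm{y}_{s_y,\Omega}$, while $\norm{\varepsilon_h^y}_{\mathcal T_h}$ is bounded directly by Lemma \ref{lemma:step3_first_lemma}, namely $h^{s_{\bm q}+1}\norm{\bm q}_{s_{\bm q},\Omega} + h^{s_y}\norm{y}_{s_y,\Omega}$.

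Finally I would combine the pieces. For the flux, both $\norm{\delta^{\bm q}}_{\mathcal T_h}$ and the leading part of $\norm{\varepsilon_h^{\bm q}}_{\mathcal T_h}$ are of order $h^{s_{\bm q}}\norm{\bm q}_{s_{\bm q},\Omega}$, so together with the $h^{s_y-1}\norm{y}_{s_y,\Omega}$ contribution they collapse to the stated bound. For the scalar variable, the projection term $h^{s_y}\norm{y}_{s_y,\Omega}$ and the corresponding term from Lemma \ref{lemma:step3_first_lemma} merge, leaving $h^{s_{\bm q}+1}\norm{\bm q}_{s_{\bm q},\Omega} + h^{s_y}\norm{y}_{s_y,\Omega}$. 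Honestly, there is no genuine obstacle in this lemma: all of the analytical difficulty --- the energy identity of Lemma \ref{energy_norm}, the error equation of Lemma \ref{lemma:step1_first_lemma}, and above all the duality argument underlying Lemma \ref{lemma:step3_first_lemma} --- has already been carried out upstream. The only point demanding a flicker of care is verifying that like-order terms may be absorbed into one another, which is immediate since all exponents are nonnegative and $h \lesssim 1$ on a quasi-uniform mesh.
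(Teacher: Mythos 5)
Your proposal is correct and coincides with the paper's own treatment: the paper states this lemma as an immediate consequence of the triangle inequality applied to the splittings $\bm q - \bm q_h(u) = \delta^{\bm q} + \varepsilon_h^{\bm q}$ and $y - y_h(u) = \delta^y + \varepsilon_h^y$, with the projection errors handled by the classical bounds \eqref{classical_ine} and the discrete errors by Lemma \ref{lemma:step2_main_lemma} and Lemma \ref{lemma:step3_first_lemma}. Your assessment that all the real work lies upstream in those lemmas is exactly right.
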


%
\subsubsection{Step 4: The error equation for part 2 of the auxiliary problem \eqref{HDG_u_b}}  We continue to bound the error between the solutions of the auxiliary problem and the mixed form \eqref{boundary_pro_a}-\eqref{boundary_pro_d} of the optimality system.  In steps 4-5, we focus on the dual variables, i.e., the state $ z $ and the flux $ \bm{p} $.  We split the errors in the variables using the $ L^2 $ projections, and we use the following notation.
\begin{equation}\label{notation_3}
\begin{split}
\delta^{\bm p} &=\bm p-{\bm\Pi}\bm p,  \qquad\qquad\qquad \qquad\qquad\qquad\;\;\;\;\varepsilon^{\bm p}_h={\bm\Pi} \bm p-\bm p_h(u),\\
\delta^z&=z- {\Pi} z, \qquad\qquad\qquad \qquad\qquad\qquad\;\;\;\; \;\varepsilon^{z}_h={\Pi} z-z_h(u),\\
\delta^{\widehat z} &= z-P_Mz,  \qquad\qquad\qquad\qquad\qquad\qquad \;\;\; \varepsilon^{\widehat z}_h=P_M z-\widehat{z}_h(u),\\
\widehat {\bm\delta}_2 &= \delta^{\bm p}\cdot\bm n+h^{-1} P_M \delta^z.
\end{split}
\end{equation}
where $\widehat z_h(u) = \widehat z_h^o(u)$ on $\varepsilon_h^o$ and $\widehat z_h(u) = 0$ on $\varepsilon_h^{\partial}$.  Note that this implies $\varepsilon_h^{\widehat z} = 0$ on $\varepsilon_h^{\partial}$.

The derivation of the error equation for part 2 of the auxiliary problem \eqref{HDG_u_b} is similar to the analysis for part 1 of the auxiliary problem in step 1 in \ref{subsec:proof_step1}; the only difference is there is one more term $(y-y_h(u),w_2)_{\mathcal T_h}$ in the right hand side.  Therefore, we state the result and omit the proof.
\begin{lemma}\label{lemma:step4_first_lemma}
	We have
	\begin{align}\label{error_z}
	\mathscr B(\varepsilon_h^{\bm p},\varepsilon_h^{z}, \varepsilon_h^{\widehat z}, \bm r_2, w_2, \mu_2) = -\langle \widehat {\bm\delta}_2, w_2\rangle_{\partial\mathcal T_h}+\langle\widehat {\bm\delta}_2 ,\mu_2\rangle_{\partial\mathcal T_h\backslash\varepsilon^{\partial}_h}+(y-y_h(u),w_2)_{\mathcal T_h}.
	\end{align}
\end{lemma}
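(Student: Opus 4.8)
The plan is to mimic the derivation of Lemma~\ref{lemma:step1_first_lemma} line for line, substituting the dual variables $(z,\bm p)$ for the primal variables $(y,\bm q)$, while carefully tracking the single genuinely new contribution coming from the adjoint source term. First I would evaluate $\mathscr B(\bm\Pi\bm p,\Pi z,P_M z;\bm r_2,w_2,\mu_2)$ directly from the definition \eqref{def_B}, and then apply the defining properties of the $L^2$ projections \eqref{L2_projection} to replace each projected factor that is paired against a polynomial by the corresponding exact quantity. This exposes the projection errors $\delta^{\bm p}$, $\delta^z$, and $\delta^{\widehat z}$ in exactly the same positions as $\delta^{\bm q}$, $\delta^y$, $\delta^{\widehat y}$ appeared in Step~1.

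Next I would invoke the consistency of the exact adjoint pair $(z,\bm p)$ with the broken mixed formulation. Since $\bm p=-\nabla z$ and $z\in H^1_0(\Omega)$, integrating by parts element-by-element gives, for all $(\bm r_2,w_2,\mu_2)\in\bm V_h\times W_h\times M_h(o)$,
\begin{align*}
(\bm{p},\bm{r}_2)_{\mathcal{T}_h}-(z,\nabla\cdot \bm{r}_2)_{\mathcal{T}_h}+\langle z,\bm r_2\cdot \bm n \rangle_{\partial \mathcal{T}_h\backslash \varepsilon_h^{\partial}} &= 0,\\
-(\bm{p},\nabla w_2)_{\mathcal{T}_h}+\langle \bm{p}\cdot \bm{n},w_2\rangle_{\partial \mathcal{T}_h} &= (y-y_d,w_2)_{\mathcal{T}_h},\\
\langle \bm{p}\cdot \bm{n},\mu_2\rangle_{\partial \mathcal{T}_h\backslash \varepsilon_h^{\partial}}&=0.
\end{align*}
The crucial departures from Step~1 are visible here: the homogeneous boundary condition $z=0$ on $\Gamma$ makes the boundary control term absent, so where Step~1 carried $-\langle u,\bm r_1\cdot\bm n\rangle_{\varepsilon_h^{\partial}}$ on the right-hand side we now simply get $0$; and the source in the second equation is $y-y_d$ rather than $f$.

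I would then substitute these identities into the expanded expression for $\mathscr B(\bm\Pi\bm p,\Pi z,P_M z;\bm r_2,w_2,\mu_2)$ and subtract part~2 of the auxiliary problem \eqref{HDG_u_b}, whose right-hand side carries $(y_h(u)-y_d,w_2)_{\mathcal{T}_h}$. The $y_d$ contributions cancel and the source difference collapses to $(y-y_h(u),w_2)_{\mathcal{T}_h}$, which is precisely the extra term in \eqref{error_z}. Collecting the surviving boundary contributions into $\widehat{\bm\delta}_2=\delta^{\bm p}\cdot\bm n+h^{-1}P_M\delta^z$ then reproduces exactly the $-\langle\widehat{\bm\delta}_2,w_2\rangle_{\partial\mathcal{T}_h}+\langle\widehat{\bm\delta}_2,\mu_2\rangle_{\partial\mathcal{T}_h\backslash\varepsilon_h^{\partial}}$ structure found in Step~1.

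Because the algebra is identical to Step~1 apart from these substitutions, I anticipate no real obstacle. The only points requiring care are the bookkeeping of the adjoint source through the subtraction (ensuring the $y_d$ terms cancel cleanly to leave $(y-y_h(u),w_2)_{\mathcal{T}_h}$), and verifying that $\varepsilon_h^{\widehat z}=0$ on $\varepsilon_h^{\partial}$—which holds since $\widehat z_h(u)=0$ and $z=0$ there—so that no spurious boundary control contribution reappears in $\widehat{\bm\delta}_2$.
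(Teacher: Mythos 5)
Your proposal is correct and coincides with the paper's intended argument: the paper omits the proof of this lemma, stating only that it is the Step~1 derivation repeated for the dual variables with the one extra term $(y-y_h(u),w_2)_{\mathcal T_h}$ on the right-hand side, which is precisely the route you follow. Your consistency identities for the exact pair $(z,\bm p)$, the cancellation of the $y_d$ contributions against the right-hand side of \eqref{HDG_u_b}, and the check that $z=0$ on $\Gamma$ (so $P_Mz=0$ on $\varepsilon_h^{\partial}$ and $\varepsilon_h^{\widehat z}=0$ there) eliminates any boundary control term are exactly the bookkeeping the paper's ``similar to Step~1'' remark presupposes.
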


%
\subsubsection{Step 5: Estimate for $\varepsilon_h^{\boldmath p}$ and $\varepsilon_h^{z}$}
Before we estimate  $\varepsilon_h^{\bm p}$, we give the following discrete Poincar{\'e} inequality from \cite{MR3440284}.
\begin{lemma}\label{lemma:discr_Poincare_ineq}
	Since $\varepsilon_h^{\widehat z} = 0$ on $\varepsilon_h^\partial$, we have
	\begin{align}
	\|\varepsilon_h^z\|_{\mathcal T_h} \lesssim \|\nabla \varepsilon_h^z\|_{\mathcal T_h} + h^{-\frac 1 2} \|\varepsilon_h^z - \varepsilon_h^{\widehat z}\|_{\partial\mathcal T_h}.
	\end{align}
\end{lemma}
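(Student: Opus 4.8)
The plan is to prove this discrete Poincar\'e inequality by a standard duality argument that exploits the convexity of $\Omega$. Since $\varepsilon_h^z \in W_h$ is only piecewise polynomial, its $L^2(\Omega)$ norm cannot be controlled by the broken gradient alone; the face term $h^{-1/2}\|\varepsilon_h^z - \varepsilon_h^{\widehat z}\|_{\partial\mathcal T_h}$ must supply the missing control of the inter-element jumps and of the boundary values. Concretely, I would introduce the auxiliary problem $-\Delta\phi = \varepsilon_h^z$ in $\Omega$ with $\phi = 0$ on $\Gamma$. Convexity of $\Omega$ guarantees $\phi \in H^2(\Omega)$ together with the elliptic regularity estimate $\|\phi\|_{H^2(\Omega)} \lesssim \|\varepsilon_h^z\|_{\mathcal T_h}$, the same type of regularity already invoked in \eqref{reg_e}.

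Next I would write $\|\varepsilon_h^z\|_{\mathcal T_h}^2 = (\varepsilon_h^z, -\Delta\phi)_{\mathcal T_h}$ and integrate by parts element by element to obtain $\|\varepsilon_h^z\|_{\mathcal T_h}^2 = (\nabla\varepsilon_h^z, \nabla\phi)_{\mathcal T_h} - \langle \varepsilon_h^z, \nabla\phi\cdot\bm n\rangle_{\partial\mathcal T_h}$. The crucial step is to insert the single-valued trace $\varepsilon_h^{\widehat z}$: because $\varepsilon_h^{\widehat z} \in M_h$ is single-valued on each interior face and $\nabla\phi\cdot\bm n$ is continuous across interior faces (since $\phi \in H^2(\Omega)$ has a single-valued normal flux), while $\varepsilon_h^{\widehat z} = 0$ on $\varepsilon_h^\partial$, the identity $\langle \varepsilon_h^{\widehat z}, \nabla\phi\cdot\bm n\rangle_{\partial\mathcal T_h} = 0$ holds. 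Subtracting this term produces
\begin{align*}
\|\varepsilon_h^z\|_{\mathcal T_h}^2 = (\nabla\varepsilon_h^z, \nabla\phi)_{\mathcal T_h} - \langle \varepsilon_h^z - \varepsilon_h^{\widehat z}, \nabla\phi\cdot\bm n\rangle_{\partial\mathcal T_h}.
\end{align*}

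Then I would bound the two terms separately. The volume term is controlled by Cauchy--Schwarz and elliptic regularity via $(\nabla\varepsilon_h^z, \nabla\phi)_{\mathcal T_h} \le \|\nabla\varepsilon_h^z\|_{\mathcal T_h}\|\phi\|_{H^1(\Omega)} \lesssim \|\nabla\varepsilon_h^z\|_{\mathcal T_h}\|\varepsilon_h^z\|_{\mathcal T_h}$. For the face term, Cauchy--Schwarz together with the scaled trace inequality $\|\nabla\phi\cdot\bm n\|_{\partial K}^2 \lesssim h^{-1}\|\nabla\phi\|_K^2 + h|\phi|_{H^2(K)}^2$, summed over $K$ and combined with $\|\phi\|_{H^2(\Omega)} \lesssim \|\varepsilon_h^z\|_{\mathcal T_h}$, yields $\|\nabla\phi\cdot\bm n\|_{\partial\mathcal T_h} \lesssim h^{-1/2}\|\varepsilon_h^z\|_{\mathcal T_h}$, so the face term is bounded by $h^{-1/2}\|\varepsilon_h^z - \varepsilon_h^{\widehat z}\|_{\partial\mathcal T_h}\|\varepsilon_h^z\|_{\mathcal T_h}$. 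Collecting both estimates and dividing through by $\|\varepsilon_h^z\|_{\mathcal T_h}$ delivers the claim.

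I expect the main obstacle to be the careful treatment of the face term: one must verify both that the exact normal flux $\nabla\phi\cdot\bm n$ is genuinely single-valued across interior faces (which is exactly where the $H^2$, hence $H(\mathrm{div})$, regularity from convexity is essential) and that the trace inequality is applied with the correct powers of $h$ to produce precisely the $h^{-1/2}$ scaling on the right-hand side. The hypothesis $\varepsilon_h^{\widehat z} = 0$ on $\varepsilon_h^\partial$ is what makes the inserted trace vanish on $\Gamma$ and is therefore indispensable to the cancellation.
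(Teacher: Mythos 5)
Your proof is correct, but it is worth noting that the paper does not actually prove this lemma at all: it imports it directly from the cited reference \cite{MR3440284}, just as it does with the key inequality of Lemma \ref{nabla_ine}. Your duality argument is therefore a genuinely self-contained alternative, and it is sound: the elementwise integration by parts giving
\begin{align*}
\norm{\varepsilon_h^z}_{\mathcal T_h}^2 = (\nabla\varepsilon_h^z, \nabla\phi)_{\mathcal T_h} - \langle \varepsilon_h^z - \varepsilon_h^{\widehat z}, \nabla\phi\cdot\bm n\rangle_{\partial\mathcal T_h}
\end{align*}
is valid because $\nabla\phi\cdot\bm n$ is single-valued across interior faces (facewise $L^2$ traces exist since $\nabla\phi \in [H^1(\Omega)]^d$) and $\varepsilon_h^{\widehat z}$ vanishes on $\varepsilon_h^\partial$, and your scaled trace inequality produces exactly the $h^{-1/2}$ weight; the quasi-uniformity of the mesh assumed in Section \ref{sec:HDG} justifies the single global mesh parameter $h$ in those scalings. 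Two small observations on what each route buys. First, your volume term needs only $\norm{\nabla\phi}_{\mathcal T_h} \lesssim \norm{\varepsilon_h^z}_{\mathcal T_h}$, which follows from the basic energy estimate; the full $H^2$ regularity from convexity is consumed only by the face term, so your argument makes visible precisely where convexity enters. Second, this reliance on convexity is also the cost of the duality route: discrete Poincar\'e inequalities of this type hold on general Lipschitz polyhedra via purely discrete techniques (e.g., Poincar\'e--Friedrichs inequalities for piecewise $H^1$ functions), which is presumably the flavor of argument in the cited source. Within this paper's standing assumptions ($\Omega$ bounded convex polyhedral throughout Section \ref{sec:analysis}) your hypotheses are all available, so your proof is fully adequate here, just slightly less general than the quoted result.
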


\begin{lemma}\label{lemma:step5_main_lemma}
	We have
	\begin{align*}
	\norm{\varepsilon_h^{\bm{p}}}_{\mathcal{T}_h}&+h^{-\frac 1 2}\|{P_M\varepsilon_h^z-\varepsilon_h^{\widehat{z}}}\|_{\partial \mathcal T_h}\\
	& \lesssim h^{s_{\bm p}}\norm{\bm p}_{s^{\bm p},\Omega} +  h^{s_{z}-1}\norm{z}_{s^{z},\Omega} + h^{s_{\bm q}+1}\norm{\bm q}_{s^{\bm q},\Omega} + h^{s_{y}}\norm{y}_{s^{y},\Omega},\\
	\|\varepsilon_h^z\|_{\mathcal T_h} & \lesssim h^{s_{\bm p}}\norm{\bm p}_{s^{\bm p},\Omega} +  h^{s_{z}-1}\norm{z}_{s^{z},\Omega} + h^{s_{\bm q}+1}\norm{\bm q}_{s^{\bm q},\Omega} + h^{s_{y}}\norm{y}_{s^{y},\Omega}.
	\end{align*}
\end{lemma}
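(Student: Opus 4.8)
The plan is to control everything by the single energy quantity $E := \norm{\varepsilon_h^{\bm p}}_{\mathcal T_h} + h^{-1/2}\norm{P_M\varepsilon_h^z-\varepsilon_h^{\widehat z}}_{\partial\mathcal T_h}$, mimicking Step~2, and then to observe that---unlike for the primary variables in Step~3---no separate duality argument is needed, since $E$ already dominates $\norm{\varepsilon_h^z}_{\mathcal T_h}$. Indeed, because $\varepsilon_h^{\widehat z}=0$ on $\varepsilon_h^\partial$, the discrete Poincar\'e inequality of Lemma~\ref{lemma:discr_Poincare_ineq} followed by the dual form of Lemma~\ref{nabla_ine} (which holds verbatim for $\varepsilon_h^{\bm p},\varepsilon_h^z,\varepsilon_h^{\widehat z}$, as its proof uses only the HDG structure) gives $\norm{\varepsilon_h^z}_{\mathcal T_h}\lesssim \norm{\nabla\varepsilon_h^z}_{\mathcal T_h}+h^{-1/2}\norm{\varepsilon_h^z-\varepsilon_h^{\widehat z}}_{\partial\mathcal T_h}\lesssim E$. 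Thus the second asserted estimate will follow the moment $E$ is bounded.

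To bound $E$, I would take $(\bm v_h,w_h,\mu_h)=(\varepsilon_h^{\bm p},\varepsilon_h^z,\varepsilon_h^{\widehat z})$ in Lemma~\ref{energy_norm}, obtaining $\mathscr B(\varepsilon_h^{\bm p},\varepsilon_h^z,\varepsilon_h^{\widehat z};\varepsilon_h^{\bm p},\varepsilon_h^z,\varepsilon_h^{\widehat z})=E^2$, and then use the same triple as test functions in the error equation \eqref{error_z} of Lemma~\ref{lemma:step4_first_lemma}. Since $\varepsilon_h^{\widehat z}=0$ on $\varepsilon_h^\partial$, the two boundary terms collapse to a single one and I arrive at
\[ E^2 = -\langle\widehat{\bm\delta}_2,\varepsilon_h^z-\varepsilon_h^{\widehat z}\rangle_{\partial\mathcal T_h}+(y-y_h(u),\varepsilon_h^z)_{\mathcal T_h}. \]
The first term is treated exactly as in Lemma~\ref{lemma:step2_main_lemma}: expand $\widehat{\bm\delta}_2=\delta^{\bm p}\cdot\bm n+h^{-1}P_M\delta^z$, move $P_M$ onto the discrete factor, apply Cauchy--Schwarz with the $h^{1/2}\cdot h^{-1/2}$ splitting, and use Lemma~\ref{nabla_ine} and the projection bounds \eqref{classical_ine}; this contributes a factor $(h^{s_{\bm p}}\norm{\bm p}_{s_{\bm p},\Omega}+h^{s_z-1}\norm{z}_{s_z,\Omega})\,E$.

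The genuinely new ingredient, and the main obstacle, is the coupling term $(y-y_h(u),\varepsilon_h^z)_{\mathcal T_h}$, which has no counterpart in Steps~2--3 and reflects that the dual error is forced by the primary state error. I would estimate it by Cauchy--Schwarz as $\norm{y-y_h(u)}_{\mathcal T_h}\,\norm{\varepsilon_h^z}_{\mathcal T_h}$, bounding the first factor by the sharp optimal rate from Lemma~\ref{lemma:step3_conv_rates} and the second by $E$ via the Poincar\'e reduction above; this yields a factor $(h^{s_{\bm q}+1}\norm{\bm q}_{s_{\bm q},\Omega}+h^{s_y}\norm{y}_{s_y,\Omega})\,E$. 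It is crucial to use here the duality-improved bound for $\norm{y-y_h(u)}_{\mathcal T_h}$ from Step~3 rather than the coarser energy-level bound of Step~2, as the latter would cost one power of $h$ and spoil the rate. Finally, collecting the two contributions gives $E^2$ bounded by the claimed right-hand side expression times $E$; dividing by $E$ yields the first estimate, and the Poincar\'e reduction then delivers the second.
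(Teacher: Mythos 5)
Your proposal is correct and follows essentially the same route as the paper: the paper likewise combines the energy identity of Lemma~\ref{energy_norm} with the error equation \eqref{error_z} tested by $(\varepsilon_h^{\bm p},\varepsilon_h^z,\varepsilon_h^{\widehat z})$, treats the $\widehat{\bm\delta}_2$ term exactly as in Step~2, and handles the coupling term $(y-y_h(u),\varepsilon_h^z)_{\mathcal T_h}$ via Cauchy--Schwarz together with the discrete Poincar\'e inequality of Lemma~\ref{lemma:discr_Poincare_ineq}, the dual form of Lemma~\ref{nabla_ine}, and the duality-improved bound on $\norm{y-y_h(u)}_{\mathcal T_h}$ from Lemma~\ref{lemma:step3_conv_rates}. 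Your observations that no new duality argument is needed for $z$ and that the Step~3 rate (rather than the Step~2 energy rate) for $\norm{y-y_h(u)}_{\mathcal T_h}$ is essential match the paper's proof precisely; dividing by $E$ is just a cosmetic variant of the paper's use of Young's inequality.
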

\begin{proof}
	First, we note the key inequality in Lemma \ref{nabla_ine} is valid with $ (z,\bm p, \hat z) $ in place of $ (y,\bm q, \hat y) $.  This gives
	\[ \|\nabla \varepsilon_h^z\|_{\mathcal T_h} + h^{-\frac 1 2} \|\varepsilon_h^z - \varepsilon_h^{\widehat z}\|_{\partial\mathcal T_h} \lesssim \|\varepsilon^{\bm p}_h\|_{\mathcal T_h}+h^{-\frac1 2}\|P_M\varepsilon^z_h-\varepsilon^{\widehat z}_h\|_{\partial\mathcal T_h}, \]
	which we use below.  Next, since $\varepsilon_h^{\widehat z}=0$ on $\varepsilon_h^\partial$, the basic property of $ \mathscr B $ in Lemma \ref{energy_norm} gives
	\[ \mathscr B(\varepsilon_h^{\bm p},\varepsilon_h^{ z}, \varepsilon_h^{\widehat z}, \varepsilon_h^{\bm p},\varepsilon_h^{z}, \varepsilon_h^{\widehat z}) =(\varepsilon_h^{\bm{p}},\varepsilon_h^{\bm{p}})_{\mathcal{T}_h}+h^{-1} \|{P_M\varepsilon_h^z-\varepsilon_h^{\widehat{z}}}\|_{\partial \mathcal T_h}^2. \]
	Then taking $(\bm r_2, w_2,\mu_2) = (\bm \varepsilon_h^{\bm p},\varepsilon_h^z,\varepsilon_h^{\widehat z})$ in \eqref{error_z} in Lemma \ref{lemma:step4_first_lemma} gives
	%
	%
	\begin{align*}
	\hspace{2em}&\hspace{-2em}(\varepsilon_h^{\bm{p}},\varepsilon_h^{\bm{p}})_{\mathcal{T}_h}+h^{-1} \|{P_M\varepsilon_h^z-\varepsilon_h^{\widehat{z}}}\|_{\partial \mathcal T_h}^2\\
	&= -\langle \widehat {\bm\delta}_2,\varepsilon_h^z - \varepsilon_h^{\widehat z}\rangle_{\partial\mathcal T_h} + (y-y_h(u),\varepsilon_h^z)_{\mathcal T_h}\\
	&= -\langle\delta^{\bm p}\cdot \bm n,\varepsilon_h^z - \varepsilon_h^{\widehat z} \rangle_{\partial\mathcal T_h}- h^{-1} \langle \delta^z,P_M \varepsilon_h^z - \varepsilon_h^{\widehat z}\rangle_{\partial\mathcal T_h}+ (y-y_h(u),\varepsilon_h^z)_{\mathcal T_h}\\
	&\le \norm{\delta^{\bm p}}_{\partial \mathcal T_h}\| {\varepsilon_h^z - \varepsilon_h^{\widehat z}}\|_{\partial \mathcal T_h} + h^{-1}
	\|{\delta ^z}\|_{\partial\mathcal T_h} \norm{P_M \varepsilon_h^z - \varepsilon_h^{\widehat z}}_{\partial\mathcal T_h}\\
	& \quad + \|y-y_h(u)\|_{\mathcal T_h} \|\varepsilon_h^z\|_{\mathcal T_h}\\
	&\le h^{1/2}\norm{\delta^{\bm p}}_{\partial \mathcal T_h}h^{-1/2}\| {\varepsilon_h^z - \varepsilon_h^{\widehat z}}\|_{\partial \mathcal T_h}  +h^{-\frac 1 2}
	\|{\delta ^z}\|_{\partial\mathcal T_h} h^{-\frac 1 2} \norm{P_M \varepsilon_h^z - \varepsilon_h^{\widehat z}}_{\partial\mathcal T_h}\\
	&\quad + \|y-y_h(u)\|_{\mathcal T_h} \|\varepsilon_h^z\|_{\mathcal T_h}\\
	&\le h^{1/2}\norm{\delta^{\bm p}}_{\partial \mathcal T_h} (\|\varepsilon^{\bm p}_h\|_{\mathcal T_h}+h^{-\frac1 2}\|P_M\varepsilon^z_h-\varepsilon^{\widehat z}_h\|_{\partial\mathcal T_h})\\
	& \quad +h^{-\frac 1 2}
	\|{\delta ^z}\|_{\partial\mathcal T_h} h^{-\frac 1 2} \norm{P_M \varepsilon_h^z - \varepsilon_h^{\widehat z}}_{\partial\mathcal T_h}\\
	&\quad + C\|y-y_h(u)\|_{\mathcal T_h} (\|\nabla \varepsilon_h^z\|_{\mathcal T_h} + h^{-\frac 1 2} \|\varepsilon_h^z - \varepsilon_h^{\widehat z}\|_{\partial\mathcal T_h})\\
	&\le  h^{1/2}\norm{\delta^{\bm p}}_{\partial \mathcal T_h} (\|\varepsilon^{\bm p}_h\|_{\mathcal T_h}+h^{-\frac1 2}\|P_M\varepsilon^z_h-\varepsilon^{\widehat z}_h\|_{\partial\mathcal T_h})\\
	& \quad +h^{-\frac 1 2}
	\|{\delta ^z}\|_{\partial\mathcal T_h} h^{-\frac 1 2} \norm{P_M \varepsilon_h^z - \varepsilon_h^{\widehat z}}_{\partial\mathcal T_h}\\
	&\quad + C\|y-y_h(u)\|_{\mathcal T_h} (\|\varepsilon^{\bm p}_h\|_{\mathcal T_h}+h^{-\frac1 2}\|P_M\varepsilon^z_h-\varepsilon^{\widehat z}_h\|_{\partial\mathcal T_h}).
	\end{align*}
	Applying Young's inequality and Lemma \ref{lemma:step3_conv_rates} gives
	\begin{align*}
	\hspace{4em}&\hspace{-4em}(\varepsilon_h^{\bm{p}},\varepsilon_h^{\bm{p}})_{\mathcal{T}_h}+h^{-1} \|{P_M\varepsilon_h^z-\varepsilon_h^{\widehat{z}}}\|_{\partial \mathcal T_h}^2\\
	&\lesssim h \norm{\delta^{\bm p}}_{\partial \mathcal T_h}^2 + h^{-1}\|{\delta ^z}\|_{\partial\mathcal T_h}^2  +\|y_h(u)- y\|^2_{\mathcal T_h}\\
	&\lesssim h^{2s_{\bm p}}\norm{\bm p}_{s^{\bm p},\Omega}^2 +  h^{2s_{z}-2}\norm{z}_{s^{z},\Omega}^2 + h^{2s_{\bm q}+2}\norm{\bm q}_{s^{\bm q},\Omega}^2 + h^{2s_{y}}\norm{y}_{s^{y},\Omega}^2.
	\end{align*}
	This gives
	\begin{align*}
	\|\varepsilon_h^{\bm p}\|_{\mathcal T_h} &+ h^{-\frac1 2}\|P_M\varepsilon^z_h-\varepsilon^{\widehat z}_h\|_{\partial\mathcal T_h}\\
	& \lesssim h^{s_{\bm p}}\norm{\bm p}_{s^{\bm p},\Omega} +  h^{s_{z}-1}\norm{z}_{s^{z},\Omega} + h^{s_{\bm q}+1}\norm{\bm q}_{s^{\bm q},\Omega} + h^{s_{y}}\norm{y}_{s^{y},\Omega},\\
	\|\varepsilon_h^z\|_{\mathcal T_h} & \lesssim \|\nabla \varepsilon_h^z\|_{\mathcal T_h} + h^{-\frac 1 2} \|\varepsilon_h^z - \varepsilon_h^{\widehat z}\|_{\partial\mathcal T_h}\\
	&\lesssim \|\varepsilon^{\bm p}_h\|_{\mathcal T_h}+h^{-\frac1 2}\|P_M\varepsilon^z_h-\varepsilon^{\widehat z}_h\|_{\partial\mathcal T_h}\\
	&\lesssim h^{s_{\bm p}}\norm{\bm p}_{s^{\bm p},\Omega} +  h^{s_{z}-1}\norm{z}_{s^{z},\Omega} + h^{s_{\bm q}+1}\norm{\bm q}_{s^{\bm q},\Omega} + h^{s_{y}}\norm{y}_{s^{y},\Omega}.
	\end{align*}
\end{proof}

As a consequence, a simple application of the triangle inequality gives optimal convergence rates for $\|\bm p -\bm p_h(u)\|_{\mathcal T_h}$ and $\|z -z_h(u)\|_{\mathcal T_h}$:

\begin{lemma}\label{lemma:step5_conv_rates}
	\begin{subequations}
		\begin{align}
		\|\bm p -\bm p_h(u)\|_{\mathcal T_h}&\le \|\delta^{\bm p}\|_{\mathcal T_h} + \|\varepsilon_h^{\bm p}\|_{\mathcal T_h}\nonumber\\
		& \lesssim h^{s_{\bm p}}\norm{\bm p}_{s^{\bm p},\Omega} +  h^{s_{z}-1}\norm{z}_{s^{z},\Omega} + h^{s_{\bm q}+1}\norm{\bm q}_{s^{\bm q},\Omega} + h^{s_{y}}\norm{y}_{s^{y},\Omega},\\
		\|z -z_h(u)\|_{\mathcal T_h}&\le \|\delta^{z}\|_{\mathcal T_h} + \|\varepsilon_h^{z}\|_{\mathcal T_h} \nonumber\\
		& \lesssim h^{s_{\bm p}}\norm{\bm p}_{s^{\bm p},\Omega} +  h^{s_{z}-1}\norm{z}_{s^{z},\Omega} + h^{s_{\bm q}+1}\norm{\bm q}_{s^{\bm q},\Omega} + h^{s_{y}}\norm{y}_{s^{y},\Omega}.
		\end{align}
	\end{subequations}
\end{lemma}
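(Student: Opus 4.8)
The plan is to prove both bounds by the same $L^2$-projection splitting followed by the triangle inequality that was used for the primary variables in Lemma \ref{lemma:step3_conv_rates}. Using the notation in \eqref{notation_3}, I would write the flux error as $\bm p - \bm p_h(u) = \delta^{\bm p} + \varepsilon_h^{\bm p}$ and the scalar state error as $z - z_h(u) = \delta^z + \varepsilon_h^z$, and then apply the triangle inequality to reduce each quantity to its projection part plus its discrete part.

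Next, I would bound the projection parts $\norm{\delta^{\bm p}}_{\mathcal T_h}$ and $\norm{\delta^z}_{\mathcal T_h}$ using the classical approximation estimates in \eqref{classical_ine}, which as noted there hold verbatim for $\bm p$ and $z$; this gives $\norm{\delta^{\bm p}}_{\mathcal T_h} \lesssim h^{s_{\bm p}}\norm{\bm p}_{s_{\bm p},\Omega}$ and $\norm{\delta^z}_{\mathcal T_h} \lesssim h^{s_z}\norm{z}_{s_z,\Omega}$. I would then invoke Lemma \ref{lemma:step5_main_lemma} directly for the discrete parts $\norm{\varepsilon_h^{\bm p}}_{\mathcal T_h}$ and $\norm{\varepsilon_h^z}_{\mathcal T_h}$, which already carry the full four-term right-hand side appearing in the statement of the lemma.

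Finally, I would combine the two contributions and check that no rate is lost. The projection term $h^{s_{\bm p}}\norm{\bm p}_{s_{\bm p},\Omega}$ coincides with the leading term already present in the estimate for $\varepsilon_h^{\bm p}$, while $h^{s_z}\norm{z}_{s_z,\Omega}$ is dominated by the $h^{s_z-1}\norm{z}_{s_z,\Omega}$ term for $h \le 1$, so the projection contributions are simply absorbed into the bound furnished by Lemma \ref{lemma:step5_main_lemma}. Since every ingredient has already been established, there is no genuine obstacle in this step; the only point requiring any care is the elementary bookkeeping verification that the projection error for $z$ does not yield a slower rate than the discrete estimate, which is immediate because $s_z > 0$ forces $h^{s_z} \le h^{s_z - 1}$ for $h \le 1$.
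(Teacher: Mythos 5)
Your proposal is correct and takes essentially the same route as the paper, which likewise obtains this lemma by the splittings $\bm p - \bm p_h(u) = \delta^{\bm p} + \varepsilon_h^{\bm p}$ and $z - z_h(u) = \delta^{z} + \varepsilon_h^{z}$, the triangle inequality, the projection bounds \eqref{classical_ine}, and Lemma \ref{lemma:step5_main_lemma}. (One tiny remark: the absorption $h^{s_z} \le h^{s_z-1}$ for $h \le 1$ needs no condition on $s_z$ at all, since dividing reduces it to $h \le 1$.)
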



\subsubsection{Step 6: Estimate for $\|u-u_h\|_{\varepsilon_h^\partial}$ and $\norm {y-y_h}_{\mathcal T_h}$}

Next, we bound the error between the solutions of the auxiliary problem and the HDG problem \eqref{HDG_full_discrete}.  We use these error bounds and the error bounds in Lemma \ref{lemma:step3_conv_rates}, Lemma \ref{lemma:step5_main_lemma}, and Lemma \ref{lemma:step5_conv_rates} to obtain the main result.

For the remaining steps, we denote
\begin{equation*}
\begin{split}
\zeta_{\bm q} &=\bm q_h(u)-\bm q_h,\quad\zeta_{y} = y_h(u)-y_h,\quad\zeta_{\widehat y} = \widehat y_h(u)-\widehat y_h,\\
\zeta_{\bm p} &=\bm p_h(u)-\bm p_h,\quad\zeta_{z} = z_h(u)-z_h,\quad\zeta_{\widehat z} = \widehat z_h(u)-\widehat z_h,
\end{split}
\end{equation*}
where $ \widehat y_h = \widehat y_h^o $ on $ \varepsilon_h^o $, $ \widehat y_h = u_h $ on $ \varepsilon_h^\partial $, $ \widehat z_h = \widehat z_h^o $ on $ \varepsilon_h^o $, and $ \widehat z_h = 0 $ on $ \varepsilon_h^\partial $.  This gives $ \zeta_{\widehat z} = 0 $ on $ \varepsilon_h^\partial $.

Subtracting the auxiliary problem and the HDG problem gives the following error equations
%
\begin{subequations}\label{eq_yh}
	\begin{align}
	\mathscr B(\zeta_{\bm q},\zeta_y,\zeta_{\widehat y};\bm r_1, w_1,\mu_1) &= -\langle P_Mu-u_h, \bm{r_1}\cdot \bm{n} - h^{-1} w_1 \rangle_{{\varepsilon_h^{\partial}}}\label{eq_yh_yhu},\\
	\mathscr B(\zeta_{\bm p},\zeta_z,\zeta_{\widehat z};\bm r_2, w_2,\mu_2) &= (\zeta_y, w_2)_{\mathcal T_h}\label{eq_zh_zhu},
	\end{align}
\end{subequations}
for all $\left(\bm{r}_1, \bm{r}_2, w_1, w_2, \mu_1, \mu_2\right)\in \bm V_h\times\bm V_h\times W_h\times W_h\times M_h(o)\times M_h(o) $.

\begin{lemma}
	We have
	\begin{align*}
	\hspace{2em}&\hspace{-2em} \norm{u-u_h}_{\varepsilon_h^\partial}^2  + \gamma^{-1}\norm {\zeta_y}_{\mathcal T_h}^2 \\
	&= \langle u+\gamma^{-1}\bm p_h(u)\cdot\bm n +\gamma^{-1}h^{-1} P_M z_h(u),u-u_h\rangle_{\varepsilon_h^\partial}\\
	& \quad - \langle u_h+\gamma^{-1}\bm p_h\cdot\bm n +\gamma^{-1}h^{-1} P_M z_h,u-u_h\rangle_{\varepsilon_h^\partial}.
	\end{align*}
\end{lemma}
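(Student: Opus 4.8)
The plan is to reduce the asserted identity to the single scalar relation
\[
\gamma^{-1}\norm{\zeta_y}_{\mathcal T_h}^2 = \gamma^{-1}\langle \zeta_{\bm p}\cdot\bm n + h^{-1} P_M\zeta_z,\, u-u_h\rangle_{\varepsilon_h^\partial},
\]
because adding $\norm{u-u_h}_{\varepsilon_h^\partial}^2$ to both sides and expanding $\zeta_{\bm p} = \bm p_h(u)-\bm p_h$ together with $P_M\zeta_z = P_M z_h(u)-P_M z_h$ regroups the right-hand side exactly into the two boundary pairings in the statement. So it suffices to establish the displayed relation.

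The engine of the argument is the antisymmetry of $\mathscr B$ that was already verified inside the proof of Proposition \ref{eq_B}: for any admissible arguments one has
\[
\mathscr B(\bm a_1,b_1,c_1;\bm a_2,-b_2,-c_2) + \mathscr B(\bm a_2,b_2,c_2;-\bm a_1,b_1,c_1) = 0,
\]
which is a purely algebraic consequence of element-wise integration by parts and the defining property \eqref{L2_projection} of $P_M$, and does not depend on the equations the arguments satisfy. I would therefore take the cross test functions $(\bm r_1,w_1,\mu_1) = (\zeta_{\bm p},-\zeta_z,-\zeta_{\widehat z})$ in the error equation \eqref{eq_yh_yhu} and $(\bm r_2,w_2,\mu_2) = (-\zeta_{\bm q},\zeta_y,\zeta_{\widehat y})$ in \eqref{eq_zh_zhu}, with $\zeta_{\widehat y},\zeta_{\widehat z}$ read as the interior trace differences in $M_h(o)$ so that the choices are admissible. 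Adding the two error equations, the left-hand sides cancel by the antisymmetry, while the right-hand sides produce
\[
0 = -\langle P_Mu-u_h,\, \zeta_{\bm p}\cdot\bm n + h^{-1}\zeta_z\rangle_{\varepsilon_h^\partial} + \norm{\zeta_y}_{\mathcal T_h}^2,
\]
where the term $+h^{-1}\zeta_z$ comes from the $-h^{-1}w_1 = h^{-1}\zeta_z$ appearing in the right-hand side of \eqref{eq_yh_yhu}.

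It then remains to replace $P_M u - u_h$ by $u-u_h$ and $\zeta_z$ by $P_M\zeta_z$, and this projection bookkeeping is the only genuinely delicate point. Since $\zeta_{\bm p}\in\bm V_h$, its normal trace $\zeta_{\bm p}\cdot\bm n$ is a degree-$k$ polynomial on each boundary face, and $P_M\zeta_z$ is degree $k$ as well, so the full test quantity $\zeta_{\bm p}\cdot\bm n + h^{-1}P_M\zeta_z$ lies in $M_h(\partial)$. Because $P_Mu-u_h\in M_h(\partial)$ is degree $k$ while $\zeta_z$ is degree $k+1$, property \eqref{L2_projection} (with $h$ constant on each face) gives $\langle P_Mu-u_h, h^{-1}\zeta_z\rangle_{\varepsilon_h^\partial} = \langle P_Mu-u_h, h^{-1}P_M\zeta_z\rangle_{\varepsilon_h^\partial}$; and because the resulting test function is degree $k$, the same property gives $\langle P_Mu,\cdot\rangle_{\varepsilon_h^\partial} = \langle u,\cdot\rangle_{\varepsilon_h^\partial}$. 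Combining these two exact swaps yields
\[
\norm{\zeta_y}_{\mathcal T_h}^2 = \langle u-u_h,\, \zeta_{\bm p}\cdot\bm n + h^{-1}P_M\zeta_z\rangle_{\varepsilon_h^\partial},
\]
which is the desired relation after multiplying by $\gamma^{-1}$, and the proof concludes by the regrouping described in the first paragraph. The main obstacle is thus not any estimate but recognizing that the boundary test quantity lies in $M_h(\partial)$, so that both projection swaps are exact rather than merely approximate.
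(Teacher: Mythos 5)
Your proposal is correct and takes essentially the same route as the paper's own proof: the same algebraic regrouping of the two boundary pairings into $\norm{u-u_h}_{\varepsilon_h^\partial}^2 + \gamma^{-1}\langle \zeta_{\bm p}\cdot\bm n + h^{-1}P_M\zeta_z, u-u_h\rangle_{\varepsilon_h^\partial}$, the same antisymmetry identity for $\mathscr B$ carried over from the proof of the well-posedness proposition, the same cross test functions $(\zeta_{\bm p},-\zeta_z,-\zeta_{\widehat z})$ and $(-\zeta_{\bm q},\zeta_y,\zeta_{\widehat y})$ in the two error equations, and the same exact projection swaps $P_Mu-u_h \mapsto u-u_h$ and $\zeta_z \mapsto P_M\zeta_z$. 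The only differences are presentational: you reduce the identity first and prove the key relation second, and you make explicit the degree-$k$ argument showing $\zeta_{\bm p}\cdot\bm n + h^{-1}P_M\zeta_z \in M_h(\partial)$ (so the swaps are exact), which the paper leaves implicit.
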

\begin{proof}
	First, we have
	\begin{align*}
	&\langle u+\gamma^{-1}\bm p_h(u)\cdot\bm n +\gamma^{-1}h^{-1} P_Mz_h(u),u-u_h\rangle_{\varepsilon_h^\partial}\\
	& \quad - \langle u_h+\gamma^{-1}\bm p_h\cdot\bm n +\gamma^{-1}h^{-1} P_Mz_h,u-u_h\rangle_{\varepsilon_h^\partial}\\
	&= \norm{u-u_h}_{\varepsilon_h^\partial}^2 + \gamma^{-1} \langle \zeta_{\bm p}\cdot\bm n+h^{-1} P_M \zeta_z,u-u_h\rangle_{\varepsilon_h^\partial}.
	\end{align*}
	As in the proof of Lemma \ref{eq_B},  it can be shown that
	\begin{align*}
	\mathscr B(\zeta_{\bm q},\zeta_y,\zeta_{\widehat y};\zeta_{\bm p}, -\zeta_{z},-\zeta_{\widehat{ z}}) + \mathscr B(\zeta_{\bm p},\zeta_z,\zeta_{\widehat z}; -\zeta_{\bm q}, \zeta_{y},\zeta_{\widehat{y}}) = 0.
	\end{align*}
	One the other hand, we have
	\begin{align*}
	\mathscr B(\zeta_{\bm q},\zeta_y,\zeta_{\widehat y};\zeta_{\bm p}, -\zeta_{z},-\zeta_{\widehat{ z}}) &+ \mathscr B(\zeta_{\bm p},\zeta_z,\zeta_{\widehat z}; -\zeta_{\bm q}, \zeta_{y},\zeta_{\widehat{y}})\\
	&=  (\zeta_{ y},\zeta_{ y})_{\mathcal{T}_h} - \langle P_Mu-u_h, \zeta_{\bm p}\cdot \bm{n} + h^{-1}\zeta_z \rangle_{{\varepsilon_h^{\partial}}}\\
	&= (\zeta_{ y},\zeta_{ y})_{\mathcal{T}_h} -\langle u-u_h, \zeta_{\bm p}\cdot \bm{n} + h^{-1}P_M\zeta_z \rangle_{{\varepsilon_h^{\partial}}}.
	\end{align*}
	Comparing the above two equalities gives
	\begin{align*}
	(\zeta_{ y},\zeta_{ y})_{\mathcal{T}_h} = \langle u-u_h, \zeta_{\bm p}\cdot \bm{n} + h^{-1}P_M\zeta_z \rangle_{{\varepsilon_h^{\partial}}}.
	\end{align*}
\end{proof}

\begin{thm}
	We have
	\begin{align*}
	\norm{u-u_h}_{\varepsilon_h^\partial}&\lesssim h^{s_{\bm p}-\frac 1 2}\norm{\bm p}_{s_{\bm p},\Omega} +  h^{s_{z}-\frac 3 2}\norm{z}_{s_{z},\Omega} + h^{s_{\bm q}+\frac 1 2}\norm{\bm q}_{s_{\bm q},\Omega} + h^{s_{y}-\frac 12 }\norm{y}_{s_{y},\Omega},\\
	\norm {y-y_h}_{\mathcal T_h} &\lesssim h^{s_{\bm p}-\frac 1 2}\norm{\bm p}_{s_{\bm p},\Omega} +  h^{s_{z}-\frac 3 2}\norm{z}_{s_{z},\Omega} + h^{s_{\bm q}+\frac 1 2}\norm{\bm q}_{s_{\bm q},\Omega} + h^{s_{y}-\frac 12 }\norm{y}_{s_{y},\Omega}.
	\end{align*}
\end{thm}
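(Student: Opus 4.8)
The plan is to start from the identity established in the preceding lemma,
\begin{align*}
\norm{u-u_h}_{\varepsilon_h^\partial}^2 + \gamma^{-1}\norm{\zeta_y}_{\mathcal T_h}^2 &= \langle u+\gamma^{-1}\bm p_h(u)\cdot\bm n +\gamma^{-1}h^{-1} P_M z_h(u),u-u_h\rangle_{\varepsilon_h^\partial}\\
&\quad - \langle u_h+\gamma^{-1}\bm p_h\cdot\bm n +\gamma^{-1}h^{-1} P_M z_h,u-u_h\rangle_{\varepsilon_h^\partial},
\end{align*}
and to show that the right-hand side is bounded by $\mathcal E\,\norm{u-u_h}_{\varepsilon_h^\partial}$, where
\[
\mathcal E := h^{s_{\bm p}-\frac12}\norm{\bm p}_{s_{\bm p},\Omega} + h^{s_z-\frac32}\norm{z}_{s_z,\Omega} + h^{s_{\bm q}+\frac12}\norm{\bm q}_{s_{\bm q},\Omega} + h^{s_y-\frac12}\norm{y}_{s_y,\Omega}
\]
is the common right-hand side of the theorem. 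Once this is done, Young's inequality absorbs $\norm{u-u_h}_{\varepsilon_h^\partial}$ into the left side and delivers $\norm{u-u_h}_{\varepsilon_h^\partial}\lesssim\mathcal E$ and $\norm{\zeta_y}_{\mathcal T_h}\lesssim\mathcal E$ simultaneously.

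The second inner product vanishes identically: on $\varepsilon_h^\partial$ each of $u_h$, $\bm p_h\cdot\bm n$, and $P_M z_h$ belongs to $M_h(\partial)$, so testing the discrete optimality condition \eqref{HDG_full_discrete_e} against $\mu_3=u_h+\gamma^{-1}\bm p_h\cdot\bm n+\gamma^{-1}h^{-1}P_M z_h$ itself shows this quantity is zero in $L^2(\varepsilon_h^\partial)$, and the term drops out. For the first inner product I would invoke the continuous optimality relation $u+\gamma^{-1}\bm p\cdot\bm n=0$ on $\Gamma$ from \eqref{boundary_pro_d}, together with $z=0$ on $\Gamma$, the latter giving $P_M z=0$ on $\varepsilon_h^\partial$. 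Subtracting $u+\gamma^{-1}\bm p\cdot\bm n=0$ rewrites the first factor as $\gamma^{-1}\big[(\bm p_h(u)-\bm p)\cdot\bm n+h^{-1}P_M z_h(u)\big]$. Using the projection notation \eqref{notation_3}, $\bm p_h(u)-\bm p=-(\delta^{\bm p}+\varepsilon_h^{\bm p})$, and since $P_M z=0$ and $\widehat z_h(u)=0$ on $\varepsilon_h^\partial$ one has $P_M z_h(u)=-(P_M\delta^z+P_M\varepsilon_h^z)$ there. After Cauchy--Schwarz the estimate reduces to bounding $\norm{\delta^{\bm p}\cdot\bm n}_{\varepsilon_h^\partial}$, $\norm{\varepsilon_h^{\bm p}}_{\varepsilon_h^\partial}$, $h^{-1}\norm{P_M\delta^z}_{\varepsilon_h^\partial}$, and $h^{-1}\norm{P_M\varepsilon_h^z}_{\varepsilon_h^\partial}$.

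Each piece is already available from the material assembled earlier. The projection bounds \eqref{classical_ine} give $\norm{\delta^{\bm p}\cdot\bm n}_{\partial\mathcal T_h}\lesssim h^{s_{\bm p}-1/2}\norm{\bm p}_{s_{\bm p},\Omega}$ and $\norm{\delta^z}_{\partial\mathcal T_h}\lesssim h^{s_z-1/2}\norm{z}_{s_z,\Omega}$; the inverse trace inequality $\norm{w}_{\partial\mathcal T_h}\lesssim h^{-1/2}\norm{w}_{\mathcal T_h}$ turns the interior bound on $\varepsilon_h^{\bm p}$ from Lemma \ref{lemma:step5_main_lemma} into a boundary bound; and, since $\varepsilon_h^{\widehat z}=0$ on $\varepsilon_h^\partial$, the quantity $h^{-1/2}\norm{P_M\varepsilon_h^z-\varepsilon_h^{\widehat z}}_{\partial\mathcal T_h}$ controlled in Lemma \ref{lemma:step5_main_lemma} bounds $h^{-1}\norm{P_M\varepsilon_h^z}_{\varepsilon_h^\partial}$ up to a factor $h^{-1/2}$. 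Tracking the powers of $h$, all four contributions are $\lesssim\mathcal E$, which yields the control estimate and $\norm{\zeta_y}_{\mathcal T_h}\lesssim\mathcal E$. Finally $\norm{y-y_h}_{\mathcal T_h}\le\norm{y-y_h(u)}_{\mathcal T_h}+\norm{\zeta_y}_{\mathcal T_h}$, and the first term is bounded in Lemma \ref{lemma:step3_conv_rates} by $h^{s_{\bm q}+1}\norm{\bm q}_{s_{\bm q},\Omega}+h^{s_y}\norm{y}_{s_y,\Omega}$, which is of higher order than $\mathcal E$; hence $\norm{y-y_h}_{\mathcal T_h}\lesssim\mathcal E$ as well.

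The main obstacle is purely the bookkeeping of the negative powers of $h$: the inverse trace inequality applied to $\varepsilon_h^{\bm p}$ and the $h^{-1}$ scaling in front of $P_M z_h(u)$ each cost half an order of convergence, and this is exactly what degrades the interior rates of Lemma \ref{lemma:step5_main_lemma} (such as $h^{s_{\bm p}}$) into the boundary rates $h^{s_{\bm p}-1/2}$ claimed here. One should also note that $\bm p\cdot\bm n$ possesses a well-defined trace on $\Gamma$ (ensured by $r_{\bm p}>1$), so that the pointwise optimality relation $u+\gamma^{-1}\bm p\cdot\bm n=0$ may legitimately be restricted to $\varepsilon_h^\partial$ and subtracted as above.
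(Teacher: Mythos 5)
Your proposal is correct and follows essentially the same path as the paper's proof: both start from the identity of the preceding lemma, kill the second pairing via the discrete optimality condition \eqref{HDG_full_discrete_e} (the paper simply asserts $u_h+\gamma^{-1}\bm p_h\cdot\bm n+\gamma^{-1}h^{-1}P_Mz_h=0$ on $\varepsilon_h^\partial$, which your self-testing argument justifies), rewrite the first pairing using $u+\gamma^{-1}\bm p\cdot\bm n=0$ and $z=0$ on $\Gamma$, and then bound $\norm{\bm p_h(u)-\bm p}_{\partial\mathcal T_h}$ and $h^{-1}\norm{P_Mz_h(u)}_{\varepsilon_h^\partial}$ by exactly the same combination of the projection estimates \eqref{classical_ine}, the inverse trace inequality, and Lemma \ref{lemma:step5_main_lemma}, finishing with Young's inequality and the triangle inequality via Lemma \ref{lemma:step3_conv_rates}. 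Your $h$-bookkeeping, including the half-order losses from the trace/inverse estimates, matches the paper's computation.
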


\begin{proof}
	%
	Since $u+\gamma^{-1}\bm p \cdot\bm n=0$ on $\varepsilon_h^{\partial}$ and $u_h+\gamma^{-1}\bm p_h\cdot\bm n +\gamma^{-1}h^{-1}P_M z_h=0$ on $\varepsilon_h^{\partial}$ we have
	\begin{align*}
	\norm{u-u_h}_{\varepsilon_h^\partial}^2  + \gamma^{-1}\norm {\zeta_y}_{\mathcal T_h}^2 &= \langle u+\gamma^{-1}\bm p_h(u)\cdot\bm n +\gamma^{-1}h^{-1} P_Mz_h(u),u-u_h\rangle_{\varepsilon_h^\partial}\\
	&=\langle \gamma^{-1}(\bm p_h(u)-\bm p)\cdot\bm n + \gamma^{-1}h^{-1} P_Mz_h(u) ,u-u_h\rangle_{\varepsilon_h^\partial}\\
	&\lesssim (\norm {\bm p_h(u)-\bm p}_{\partial \mathcal T_h} +h^{-1}\|P_Mz_h(u) \|_{\varepsilon_h^\partial})\norm{u-u_h}_{\varepsilon_h^\partial}.
	\end{align*}
	Next, since $\widehat z_h(u) = z=0$ on $\varepsilon_h^{\partial}$ we have
	\begin{align*}
	\norm {\bm p_h(u)-\bm p}_{\partial \mathcal T_h} &\le \norm {\bm p_h(u)-\bm{\Pi}\bm p}_{\partial \mathcal T_h} +\norm {\bm{\Pi}\bm p - \bm p}_{\partial \mathcal T_h}\\
	&\lesssim  h^{-\frac 1 2}\norm {\bm p_h(u)-\bm{\Pi}\bm p}_{\mathcal T_h} +h^{s_{\bm p}-\frac 12 } \norm{\bm p}_{s^{\bm p},\Omega}\\
	& \lesssim h^{s_{\bm p}-\frac 1 2}\norm{\bm p}_{s_{\bm p},\Omega} +  h^{s_{z}-\frac 3 2}\norm{z}_{s_{z},\Omega} + h^{s_{\bm q}+\frac 1 2}\norm{\bm q}_{s_{\bm q},\Omega}\\
	& \quad + h^{s_{y}-\frac 12 }\norm{y}_{s_{y},\Omega},\\
	\|P_Mz_h(u)\|_{\varepsilon_h^\partial}&=\|P_Mz_h(u) -P_M\Pi z+P_M\Pi z -P_Mz +P_M z- \widehat z_h(u) \|_{\varepsilon_h^\partial} \\
	& \le (\|P_M\varepsilon_h^z -\varepsilon_h^{\widehat z}\|_{\varepsilon_h^\partial} + \norm {\Pi z- z}_{\varepsilon_h^\partial})\\
	& \le (\|P_M\varepsilon_h^z -\varepsilon_h^{\widehat z}\|_{\partial\mathcal T_h} + \norm {\Pi z- z}_{\partial\mathcal T_h} ).
	\end{align*}
	Lemma \ref{lemma:step5_main_lemma} and properties of the $ L^2 $ projection gives
	\begin{align*}
	\norm{u-u_h}_{\varepsilon_h^\partial}\lesssim h^{s_{\bm p}-\frac 1 2}\norm{\bm p}_{s_{\bm p},\Omega} +  h^{s_{z}-\frac 3 2}\norm{z}_{s_{z},\Omega} + h^{s_{\bm q}+\frac 1 2}\norm{\bm q}_{s_{\bm q},\Omega} + h^{s_{y}-\frac 12 }\norm{y}_{s_{y},\Omega}.
	\end{align*}
	Moreover, we have
	\begin{align*}
	\norm {\zeta_y}_{\mathcal T_h} \lesssim h^{s_{\bm p}-\frac 1 2}\norm{\bm p}_{s_{\bm p},\Omega} +  h^{s_{z}-\frac 3 2}\norm{z}_{s_{z},\Omega} + h^{s_{\bm q}+\frac 1 2}\norm{\bm q}_{s_{\bm q},\Omega} + h^{s_{y}-\frac 12 }\norm{y}_{s_{y},\Omega}.
	\end{align*}
	Then, by the triangle inequality and Lemma \ref{lemma:step3_conv_rates} we obtain
	\begin{align*}
	\norm {y - y_h}_{\mathcal T_h} \lesssim h^{s_{\bm p}-\frac 1 2}\norm{\bm p}_{s_{\bm p},\Omega} +  h^{s_{z}-\frac 3 2}\norm{z}_{s_{z},\Omega} + h^{s_{\bm q}+\frac 1 2}\norm{\bm q}_{s_{\bm q},\Omega} + h^{s_{y}-\frac 12 }\norm{y}_{s_{y},\Omega}.
	\end{align*}
\end{proof}

%
\subsubsection{Step 7: Estimates for $\|\boldmath q - \boldmath q_h\|_{\mathcal T_h}$, $\|\boldmath p-\boldmath p_h\|_{\mathcal T_h}$ and $\|z-z_h\|_{\mathcal T_h}$}
\begin{lemma}
	We have
	\begin{align*}
	\norm {\zeta_{\bm q}}_{\mathcal T_h}  &\lesssim h^{s_{\bm p}-1}\norm{\bm p}_{s_{\bm p},\Omega} +  h^{s_{z}-2}\norm{z}_{s_{z},\Omega} + h^{s_{\bm q}}\norm{\bm q}_{s_{\bm q},\Omega} + h^{s_{y}-1 }\norm{y}_{s_{y},\Omega},\\
	\norm {\zeta_{\bm p}}_{\mathcal T_h}  &\lesssim h^{s_{\bm p}-\frac 1 2}\norm{\bm p}_{s_{\bm p},\Omega} +  h^{s_{z}-\frac 3 2}\norm{z}_{s_{z},\Omega} + h^{s_{\bm q}+\frac 1 2}\norm{\bm q}_{s_{\bm q},\Omega} + h^{s_{y}-\frac 12 }\norm{y}_{s_{y},\Omega},\\
	\|\zeta_z\|_{\mathcal T_h} & \lesssim  h^{s_{\bm p}-\frac 1 2}\norm{\bm p}_{s_{\bm p},\Omega} +  h^{s_{z}-\frac 3 2}\norm{z}_{s_{z},\Omega} + h^{s_{\bm q}+\frac 1 2}\norm{\bm q}_{s_{\bm q},\Omega} + h^{s_{y}-\frac 12 }\norm{y}_{s_{y},\Omega}.
	\end{align*}
\end{lemma}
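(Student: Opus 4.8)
The plan is to estimate the pair $(\zeta_{\bm p}, \zeta_z)$ and the single quantity $\zeta_{\bm q}$ separately, in each case by testing the appropriate error equation in \eqref{eq_yh} against its own solution and invoking the energy identity of Lemma \ref{energy_norm}. In both cases the right-hand side reduces to a quantity already controlled in Step 6 — either $\norm{\zeta_y}_{\mathcal T_h}$ (whose bound is established inside the proof of the Step 6 theorem) or $\norm{u-u_h}_{\varepsilon_h^\partial}$. Once the self-testing is carried out and the resulting terms are absorbed, the three stated bounds follow by substituting those Step 6 estimates.

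For $\zeta_{\bm p}$ and $\zeta_z$ I would proceed exactly as in Step 5. Since $\zeta_{\widehat z} = 0$ on $\varepsilon_h^\partial$, taking $(\bm r_2, w_2, \mu_2) = (\zeta_{\bm p}, \zeta_z, \zeta_{\widehat z})$ in \eqref{eq_zh_zhu} and applying Lemma \ref{energy_norm} gives
\[
\norm{\zeta_{\bm p}}_{\mathcal T_h}^2 + h^{-1}\norm{P_M \zeta_z - \zeta_{\widehat z}}_{\partial \mathcal T_h}^2 = (\zeta_y, \zeta_z)_{\mathcal T_h}.
\]
The analogue of Lemma \ref{nabla_ine} for $(z, \bm p, \widehat z)$ together with the discrete Poincar\'e inequality of Lemma \ref{lemma:discr_Poincare_ineq} bounds $\norm{\zeta_z}_{\mathcal T_h}$ by the square root of the left-hand side. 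Writing $X^2$ for that left-hand side and using Cauchy--Schwarz on the right yields $X^2 \lesssim \norm{\zeta_y}_{\mathcal T_h}\,X$, hence $\norm{\zeta_{\bm p}}_{\mathcal T_h} + \norm{\zeta_z}_{\mathcal T_h} \lesssim \norm{\zeta_y}_{\mathcal T_h}$. The stated bounds for $\zeta_{\bm p}$ and $\zeta_z$ then follow at once from the Step 6 estimate for $\norm{\zeta_y}_{\mathcal T_h}$, which has precisely the claimed right-hand side.

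The estimate for $\zeta_{\bm q}$ is the delicate one. Since $\zeta_{\widehat y} \in M_h$ and only its interior part enters $\mathscr B$, I would take $(\bm r_1, w_1, \mu_1) = (\zeta_{\bm q}, \zeta_y, \zeta_{\widehat y})$ in \eqref{eq_yh_yhu} and apply Lemma \ref{energy_norm} to obtain
\[
\norm{\zeta_{\bm q}}_{\mathcal T_h}^2 + h^{-1}\norm{P_M \zeta_y - \zeta_{\widehat y}}_{\partial \mathcal T_h \backslash \varepsilon_h^\partial}^2 + h^{-1}\norm{P_M \zeta_y}_{\varepsilon_h^\partial}^2 = -\langle P_M u - u_h, \zeta_{\bm q}\cdot \bm n\rangle_{\varepsilon_h^\partial} + h^{-1}\langle P_M u - u_h, P_M\zeta_y\rangle_{\varepsilon_h^\partial},
\]
where in the last term I used that $P_M u - u_h \in M_h(\partial)$. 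The second right-hand term is absorbed into $h^{-1}\norm{P_M\zeta_y}^2_{\varepsilon_h^\partial}$ by Young's inequality, leaving a multiple of $h^{-1}\norm{P_M u - u_h}^2_{\varepsilon_h^\partial}$. For the first term I would bound $\norm{\zeta_{\bm q}\cdot \bm n}_{\varepsilon_h^\partial} \lesssim h^{-1/2}\norm{\zeta_{\bm q}}_{\mathcal T_h}$ by the inverse trace inequality in \eqref{classical_ine}, then use Young's inequality to absorb $\norm{\zeta_{\bm q}}_{\mathcal T_h}^2$, again leaving only $h^{-1}\norm{P_M u - u_h}^2_{\varepsilon_h^\partial}$. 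This gives $\norm{\zeta_{\bm q}}_{\mathcal T_h} \lesssim h^{-1/2}\norm{P_M u - u_h}_{\varepsilon_h^\partial} \le h^{-1/2}\norm{u - u_h}_{\varepsilon_h^\partial}$, where the last step uses that $P_M$ is an $L^2$ contraction and $P_M u_h = u_h$. Substituting the Step 6 bound for $\norm{u-u_h}_{\varepsilon_h^\partial}$ and multiplying through by $h^{-1/2}$ reproduces the stated estimate for $\zeta_{\bm q}$.

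The main obstacle is exactly that first boundary term in the $\zeta_{\bm q}$ estimate: because $\zeta_{\bm q}\cdot \bm n$ lies in the flux space $\bm V_h$ rather than in a trace space, the only available control is through the inverse trace inequality, which costs a factor $h^{-1/2}$. This loss is precisely what degrades the rate for $\zeta_{\bm q}$ by half an order relative to $\norm{u-u_h}_{\varepsilon_h^\partial}$ (turning the $h^{s_{\bm q}+1/2}$ contribution into $h^{s_{\bm q}}$), and some care is needed so that every right-hand side term after Young's inequality collapses onto the single quantity $h^{-1}\norm{P_M u - u_h}^2_{\varepsilon_h^\partial}$ without introducing extra projection-error contributions.
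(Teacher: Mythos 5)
Your proposal is correct and follows essentially the same route as the paper's proof: testing each error equation in \eqref{eq_yh} against its own solution via Lemma \ref{energy_norm}, using the inverse trace inequality and Young's inequality (with the $P_M$-contraction step $\|P_M u - u_h\|_{\varepsilon_h^\partial} \le \|u-u_h\|_{\varepsilon_h^\partial}$) to get $\|\zeta_{\bm q}\|_{\mathcal T_h} \lesssim h^{-1/2}\|u-u_h\|_{\varepsilon_h^\partial}$, and controlling $(\zeta_{\bm p},\zeta_z)$ by $\|\zeta_y\|_{\mathcal T_h}$ through Lemmas \ref{nabla_ine} and \ref{lemma:discr_Poincare_ineq} before substituting the Step 6 bounds. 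The only differences are cosmetic (ordering of the two estimates and a more explicit absorption argument), and your identification of the $h^{-1/2}$ loss as the source of the degraded rate for $\zeta_{\bm q}$ matches the paper's computation.
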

\begin{proof}
	By Lemma \ref{energy_norm} and the error equation \eqref{eq_yh_yhu}, we have
	\begin{align*}
	\hspace{2em}&\hspace{-2em}\mathscr B(\zeta_{\bm q},\zeta_y,\zeta_{\widehat y};\zeta_{\bm q},\zeta_y,\zeta_{\widehat y}) \\
	&=(\zeta_{\bm q}, \zeta_{\bm q})_{{\mathcal{T}_h}}+\langle h^{-1} (P_M\zeta_y-\zeta_{\widehat y}) , P_M\zeta_y-\zeta_{\widehat y}\rangle_{\partial{{\mathcal{T}_h}}\backslash\varepsilon_h^{\partial}} + \langle h^{-1} P_M\zeta_y, P_M\zeta_y\rangle_{\varepsilon_h^{\partial}}\\
	&= -\langle P_M u-u_h, \zeta_{\bm q}\cdot \bm{n} - h^{-1} \zeta_y \rangle_{{\varepsilon_h^{\partial}}}  = -\langle u-u_h, \zeta_{\bm q}\cdot \bm{n} - h^{-1} P_M\zeta_y \rangle_{{\varepsilon_h^{\partial}}}\\
	&\lesssim \norm {u-u_h}_{\varepsilon_h^{\partial}} (\norm {\bm\zeta_{\bm q}}_{\varepsilon_h^{\partial}} + h^{-1} \norm {P_M\zeta_{y}}_{\varepsilon_h^{\partial}} )\\
	&\lesssim h^{-\frac 1 2}\norm {u-u_h}_{\varepsilon_h^{\partial}} (\norm {\bm\zeta_{\bm q}}_{\mathcal T_h} + h^{-\frac 1 2} \norm {P_M\zeta_{y}}_{\varepsilon_h^{\partial}}),
	\end{align*}
	which gives
	\begin{align*}
	\norm {\zeta_{\bm q}}_{\mathcal T_h} &\lesssim h^{-\frac 1 2}\norm {u-u_h}_{\varepsilon_h^{\partial}} \\
	&\lesssim h^{s_{\bm p}-1}\norm{\bm p}_{s_{\bm p},\Omega} +  h^{s_{z}-2}\norm{z}_{s_{z},\Omega} + h^{s_{\bm q}}\norm{\bm q}_{s_{\bm q},\Omega} + h^{s_{y}-1 }\norm{y}_{s_{y},\Omega}.
	\end{align*}
	
	Next, we estimate $\zeta_{\bm p}$.  By Lemma \ref{energy_norm}, the error equation \eqref{eq_zh_zhu}, and since $\zeta_{ \widehat z} = 0$ on $\varepsilon_h^{\partial}$, we have
	\begin{align*}
	\hspace{2em}&\hspace{-2em}\mathscr B(\zeta_{\bm p},\zeta_z,\zeta_{\widehat z};\zeta_{\bm p},\zeta_z,\zeta_{\widehat z})\\
	&=(\zeta_{\bm p}, \zeta_{\bm p})_{{\mathcal{T}_h}}+\langle h^{-1} (P_M\zeta_z-\zeta_{\widehat z}) , P_M\zeta_z-\zeta_{\widehat z}\rangle_{\partial{{\mathcal{T}_h}}\backslash\varepsilon_h^{\partial}} + \langle h^{-1} P_M\zeta_z, P_M\zeta_z\rangle_{\varepsilon_h^{\partial}}\\
	&=(\zeta_{\bm p}, \zeta_{\bm p})_{{\mathcal{T}_h}}+\langle h^{-1} (P_M\zeta_z-\zeta_{\widehat z}) , P_M\zeta_z-\zeta_{\widehat z}\rangle_{\partial{{\mathcal{T}_h}}}\\
	&=(\zeta_y,\zeta_z)_{\mathcal T_h}\\
	&\le \norm{\zeta_y}_{\mathcal T_h} \norm{\zeta_z}_{\mathcal T_h}\\
	&\lesssim \norm{\zeta_y}_{\mathcal T_h} (\|\nabla \zeta_z\|_{\mathcal T_h} + h^{-\frac 1 2} \|\zeta_z - \zeta_{\widehat z}\|_{\partial\mathcal T_h}) \\
	& \lesssim \norm{\zeta_y}_{\mathcal T_h}
	(\|\zeta_{\bm p}\|_{\mathcal T_h}+h^{-\frac1 2}\|P_M\zeta_z-\zeta_{\widehat z}\|_{\partial\mathcal T_h}),
	\end{align*}
	where we used the discrete Poincar{\'e} inequality in Lemma \ref{lemma:discr_Poincare_ineq} and also Lemma \ref{nabla_ine}.  This implies
	\begin{align*}
	\hspace{2em}&\hspace{-2em} \norm {\zeta_{\bm p}}_{\mathcal T_h} +h^{-\frac1 2}\|P_M\zeta_z-\zeta_{\widehat z}\|_{\partial\mathcal T_h}\\
	& \quad \lesssim h^{s_{\bm p}-\frac 1 2}\norm{\bm p}_{s_{\bm p},\Omega} +  h^{s_{z}-\frac 3 2}\norm{z}_{s_{z},\Omega} + h^{s_{\bm q}+\frac 1 2}\norm{\bm q}_{s_{\bm q},\Omega} + h^{s_{y}-\frac 12 }\norm{y}_{s_{y},\Omega}.
	\end{align*}
	
	The discrete Poincar{\'e} inequality in Lemma \ref{lemma:discr_Poincare_ineq} also gives
	\begin{align*}
	\|\zeta_z\|_{\mathcal T_h} & \lesssim \|\nabla \zeta_z\|_{\mathcal T_h} + h^{-\frac 1 2} \|\zeta_z - \zeta_{\widehat z}\|_{\partial\mathcal T_h}\\
	&\lesssim \|\zeta_{\bm p}\|_{\mathcal T_h}+h^{-\frac1 2}\|P_M\zeta_z-\zeta_{\widehat z}\|_{\partial\mathcal T_h}\\
	&\lesssim h^{s_{\bm p}-\frac 1 2}\norm{\bm p}_{s_{\bm p},\Omega} +  h^{s_{z}-\frac 3 2}\norm{z}_{s_{z},\Omega} + h^{s_{\bm q}+\frac 1 2}\norm{\bm q}_{s_{\bm q},\Omega} + h^{s_{y}-\frac 12 }\norm{y}_{s_{y},\Omega}.
	\end{align*}
\end{proof}

The above lemma along with the triangle inequality, Lemma \ref{lemma:step3_conv_rates}, and Lemma \ref{lemma:step5_conv_rates} complete the proof of the main result:
\begin{theorem}
	We have
	\begin{align*}
	\norm {\bm q - \bm q_h}_{\mathcal T_h} &\lesssim h^{s_{\bm p}-1}\norm{\bm p}_{s_{\bm p},\Omega} +  h^{s_{z}-2}\norm{z}_{s_{z},\Omega} + h^{s_{\bm q}}\norm{\bm q}_{s_{\bm q},\Omega} + h^{s_{y}-1 }\norm{y}_{s_{y},\Omega},\\
	\norm {\bm p - \bm p_h}_{\mathcal T_h}   &\lesssim h^{s_{\bm p}-\frac 1 2}\norm{\bm p}_{s_{\bm p},\Omega} +  h^{s_{z}-\frac 3 2}\norm{z}_{s_{z},\Omega} + h^{s_{\bm q}+\frac 1 2}\norm{\bm q}_{s_{\bm q},\Omega} + h^{s_{y}-\frac 12 }\norm{y}_{s_{y},\Omega},\\
	\norm {z - z_h}_{\mathcal T_h}   & \lesssim  h^{s_{\bm p}-\frac 1 2}\norm{\bm p}_{s_{\bm p},\Omega} +  h^{s_{z}-\frac 3 2}\norm{z}_{s_{z},\Omega} + h^{s_{\bm q}+\frac 1 2}\norm{\bm q}_{s_{\bm q},\Omega} + h^{s_{y}-\frac 12 }\norm{y}_{s_{y},\Omega}.
	\end{align*}
\end{theorem}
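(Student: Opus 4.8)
The plan is to derive each of the three bounds by inserting the auxiliary solution and applying the triangle inequality, so that the total error splits into an auxiliary-to-exact piece already estimated in Steps 3 and 5 and an auxiliary-to-HDG piece $\zeta$ estimated in the preceding lemma. Concretely, for the flux $\bm q$ I would write $\bm q - \bm q_h = (\bm q - \bm q_h(u)) + \zeta_{\bm q}$ and bound $\norm{\bm q - \bm q_h}_{\mathcal T_h} \le \norm{\bm q - \bm q_h(u)}_{\mathcal T_h} + \norm{\zeta_{\bm q}}_{\mathcal T_h}$, controlling the first summand by Lemma \ref{lemma:step3_conv_rates} and the second by the preceding lemma. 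The analogous splittings $\bm p - \bm p_h = (\bm p - \bm p_h(u)) + \zeta_{\bm p}$ and $z - z_h = (z - z_h(u)) + \zeta_z$ reduce the remaining two estimates to Lemma \ref{lemma:step5_conv_rates} together with the preceding lemma.

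The one point requiring care is verifying that, after summing the two contributions, the asserted right-hand side absorbs both. For each variable the $\zeta$-term carries strictly smaller powers of $h$ than the auxiliary-to-exact term, so for $h \le 1$ it is the dominant one. For $\bm q$, the bound on $\norm{\zeta_{\bm q}}_{\mathcal T_h}$ already reproduces the $h^{s_{\bm q}}$ and $h^{s_y - 1}$ terms appearing in the estimate for $\norm{\bm q - \bm q_h(u)}_{\mathcal T_h}$ and in addition supplies the $h^{s_{\bm p}-1}$ and $h^{s_z - 2}$ terms, so the sum collapses to exactly the claimed expression. For $\bm p$ and $z$, each of the four terms in the $\zeta$-bound has an exponent precisely $1/2$ below the matching term from Lemma \ref{lemma:step5_conv_rates}; hence the $\zeta$-contributions dominate and the auxiliary-to-exact terms are subsumed into the same expression.

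I do not anticipate any genuine obstacle: once the preceding lemma and Lemmas \ref{lemma:step3_conv_rates} and \ref{lemma:step5_conv_rates} are in hand, the theorem follows purely by the triangle inequality and a direct comparison of $h$-exponents for $h \le 1$. The only bookkeeping is to confirm the domination just described, which is immediate.
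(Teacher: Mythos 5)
Your proposal is correct and is essentially identical to the paper's own proof: the paper likewise concludes this theorem from the preceding lemma on $\zeta_{\bm q}$, $\zeta_{\bm p}$, $\zeta_z$ together with the triangle inequality and Lemmas \ref{lemma:step3_conv_rates} and \ref{lemma:step5_conv_rates}. Your exponent bookkeeping (the $\zeta$-terms matching or dominating the auxiliary-to-exact terms for $h\le 1$) is exactly the comparison implicit in the paper's one-line argument.
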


\section{Numerical Experiments}
\label{sec:numerics}

For our numerical experiments, we test problems similar to the examples considered in \cite{MR2806572}; see also \cite{MR3317816,MR2272157,MR3070527}.  We chose $ k = 1 $ for all computations; i.e., quadratic polynomials are used for the scalar variables, and linear polynomials are used for the flux variables and the boundary trace variables.

We begin with a 2D example on a square domain $\Omega = [0,1/4]\times [0,1/4] \subset \mathbb{R}^2$.  The largest interior angle is $ \omega = {\pi}/{2}$, and so $ r_\Omega = 3 $ and $p_{\Omega} = \infty$.  The data is chosen as
\begin{align*}
f=0, \ \  y_d = (x^2+y^2)^{s}  \ \ \ \mbox{and} \  \ \ \gamma = 1,
\end{align*}
where $s=10^{-5}$.  Then $y_d\in H^1(\Omega) \cap L^{\infty}(\Omega)$, and Corollary \ref{main_reslut} in Section \ref{sec:analysis} gives the convergence rates
\begin{align*}
&\norm{y-{y}_h}_{0,\Omega}=O( h^{3/2-\varepsilon} ),\quad  \;\norm{z-{z}_h}_{0,\Omega}=O( h^{3/2-\varepsilon} ),\\
&\norm{\bm{q}-\bm{q}_h}_{0,\Omega} = O( h^{1-\varepsilon} ),\quad  \quad\;\; \norm{\bm{p}-\bm{p}_h}_{0,\Omega} = O( h^{3/2-\varepsilon} ),
\end{align*}
and
\begin{align*}
&\norm{u-{u}_h}_{0,\Gamma} = O( h^{3/2-\varepsilon}).
\end{align*}

Since we do not have an explicit expression for the exact solution, we solved the problem numerically for a triangulation with 262144 elements, i.e., $h = 2^{-12}\sqrt{2}$ and compared this reference solution against other solutions computed on meshes with larger $h$.  The numerical results are shown in Table \ref{table_1}.  The convergence rates observed for $\norm{\bm{q}-\bm{q}_h}_{0,\Omega}$ and $\norm{{u}-{u}_h}_{0,\Gamma}$ are in agreement with our theoretical results, while the convergence rates for $\norm{\bm{p}-\bm{p}_h}_{0,\Omega}$,  $\norm{{y}-{y}_h}_{0,\Omega}$, and $\norm{{z}-{z}_h}_{0,\Omega}$  are higher than our theoretical results.  A similar phenomena can be observed in  \cite{MR3070527,MR3317816,MR2806572}.
\begin{table}[!hbp]
	\begin{center}
		\begin{tabular}{|c|c|c|c|c|c|}
			\hline
			$h/\sqrt{2}$ &$2^{-4}$& 1/$2^{-5}$&$2^{-6}$ &$2^{-7}$ & $2^{-8}$ \\
			\hline
			$\norm{\bm{q}-\bm{q}_h}_{0,\Omega}$&4.1343e-02& 2.1025e-02&1.0677e-02  & 5.3865e-03 & 2.6959e-03 \\
			\hline
			order&-& 0.9756& 0.9776  &0.9871& 0.9986 \\
			\hline
			$\norm{\bm{p}-\bm{p}_h}_{0,\Omega}$&1.3463e-03 & 3.8638e-04&1.0849e-04 &2.9862e-05 & 8.0969e-06 \\
			\hline
			order&-&  1.8009&1.8325  &1.8612  & 1.8828 \\
			\hline
			$\norm{{y}-{y}_h}_{0,\Omega}$&5.4609e-04& 1.3647e-04 &3.4763e-05 &8.8037e-06  & 2.2236e-06\\
			\hline
			order&-& 2.0005&1.9730&1.9814 & 1.9852 \\
			\hline
			$\norm{{z}-{z}_h}_{0,\Omega}$&1.9671e-05& 2.6887e-06&3.7026e-07  &5.0372e-08 & 6.7767e-09 \\
			\hline
			order&-& 2.8711&2.8603 &2.8778& 2.8940 \\
			\hline
			$\norm{{u}-{u}_h}_{0,\Gamma}$&7.3053e-03& 2.6902e-03&9.7764e-04 &3.5178e-04  & 1.2569e-04 \\
			\hline
			order&-&  1.4412& 1.4603  &1.4746& 1.4849 \\
			\hline
		\end{tabular}
	\end{center}
	\caption{Error of control $u$, state $y$, adjoint state $z$, and their fluxes $\bm q$ and $\bm p$.}\label{table_1}
\end{table}



For illustration, we plot the state $y$, adjoint state $z$, and their fluxes $\bm q$ and $\bm p$.  The 2D regularity result in Section \ref{sec:background} indicate that the primary flux $ \bm q $ can have low regularity.  In this example, it does indeed appear that $ \bm q $ has singularities at the corners of the domain.  These figures can be compared to similar plots in \cite{MR3317816,MR2272157}.
\begin{figure}[hbt]
	\centerline{
		\hbox{\includegraphics[width=\linewidth]{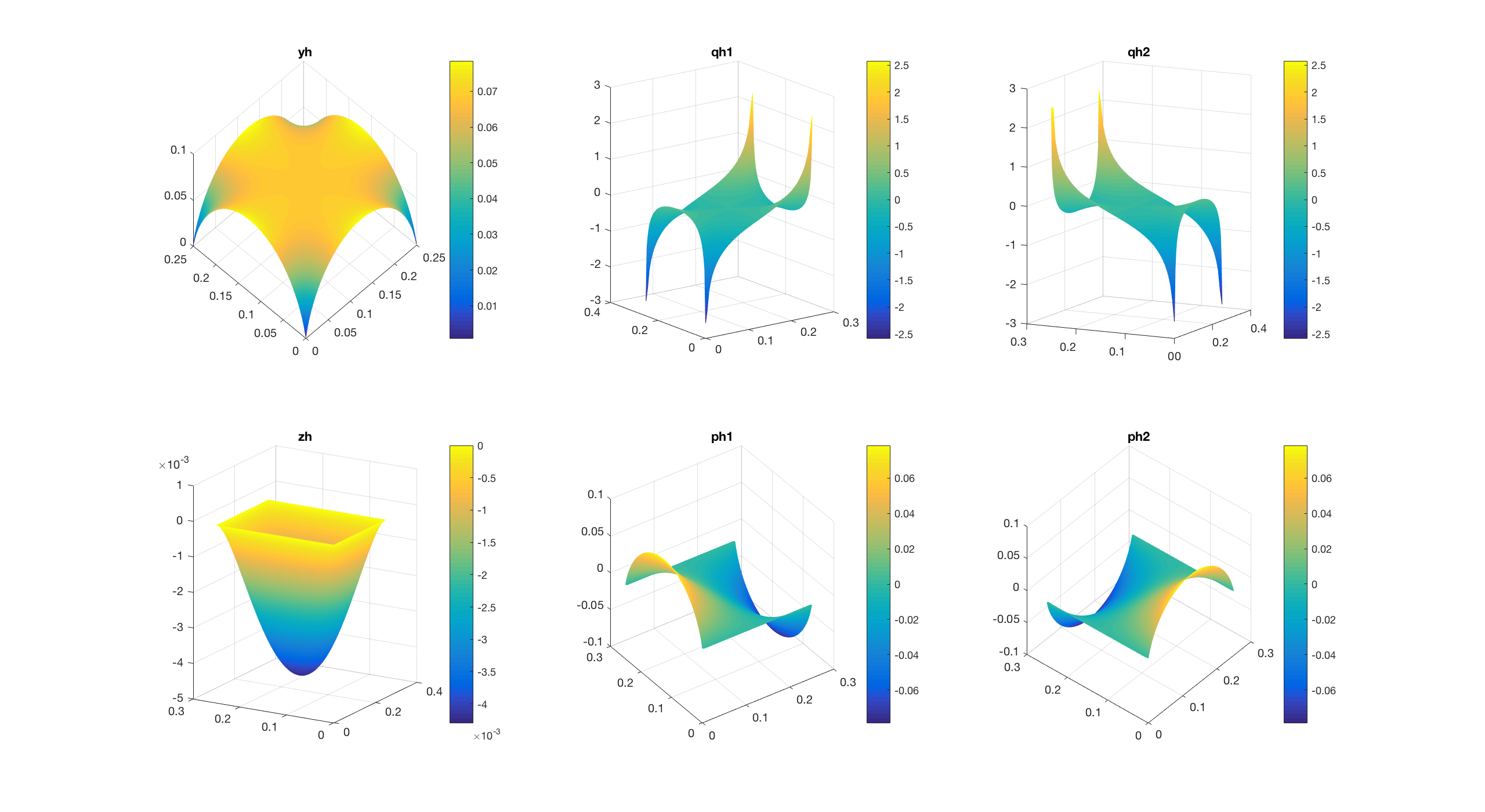}}
	}
	\caption{The primary state $y_h$, the primary flux $\bm q_h$, the dual state $z_h$, and the dual flux $\bm p_h$ for the 2D example}
	\centering
\end{figure}

%
%
%
%
%
%
\begin{figure}[hbt]
	\centerline{
		\hbox{\includegraphics[height=3in]{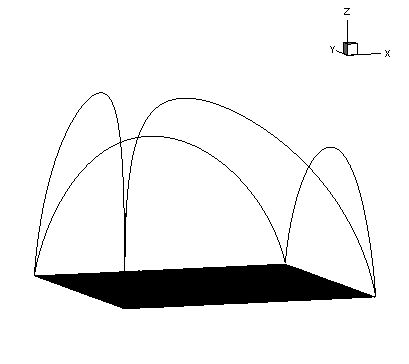}}}
	\caption{The optimal control $u_h$ for the 2D example}
	\centering
\end{figure}


Next, we consider a 3D extension of the 2D example above. The domain is a cube $\Omega=[0,1/32]\times [0,1/32]\times [0,1/32]$, and the data is chosen as
\begin{align*}
f=0, \ \  y_d = (x^2+y^2+z^2)^{s} \quad \mbox{and} \quad  \gamma = 1,
\end{align*}
where $s= -1/4+10^{-5}$, so that $y_d\in H^1(\Omega)$. In this case, we did not attempt to determine the regularity of the control and other variables; we simply present the numerical results here.

As in the 2D example above, we do not have an explicit expression for the exact solution.  Therefore, we solved the problem numerically for a triangulation with 196608 tetrahedrons, i.e., $h = 2^{-12}\sqrt{3}$ and compared this reference solution against other solutions computed on meshes with larger $h$.  The numerical results are shown in Table \ref{table_2}. The observed convergence rates for all variables are similar to the results for the 2D example above.

\begin{table}[!hbp]
	\begin{center}
		\begin{tabular}{|c|c|c|c|c|c|}
			\hline
			$h/\sqrt{3}$ &$2^{-6}$& $2^{-7}$&$2^{-8}$ &$2^{-9}$ \\
			\hline
			$\norm{\bm{q}-\bm{q}_h}_{0,\Omega}$&9.2640e-03& 5.2580e-03 & 2.7462e-03  & 1.2475e-03 \\
			\hline
			order&-& 0.81712&  0.93706  &1.1384 \\
			\hline
			$\norm{\bm{p}-\bm{p}_h}_{0,\Omega}$&3.5425e-05& 1.2283e-05&3.8463e-06 &1.1022e-06  \\
			\hline
			order&-&  1.5281&1.6751  &1.8032  \\
			\hline
			$\norm{{y}-{y}_h}_{0,\Omega}$&1.6040e-05& 4.5070e-06 &1.2191e-06 &2.9781e-07  \\
			\hline
			order&-& 1.8314&1.8864& 2.0333  \\
			\hline
			$\norm{{z}-{z}_h}_{0,\Omega}$&7.8545e-08& 1.3058e-08 &2.0042e-09  &2.8775e-10  \\
			\hline
			order&-&2.5886 &2.7039 &2.8001\\
			\hline
			$\norm{{u}-{u}_h}_{0,\Gamma}$&4.5932e-04&  1.8934e-04&7.1955e-05 &2.4123e-05   \\
			\hline
			order&-&  1.2785& 1.3958 & 1.5767 \\
			\hline
		\end{tabular}
	\end{center}
	\caption{Error of control $u$, state $y$, adjoint state $z$, and their fluxes $\bm q$ and $\bm p$.}\label{table_2}
\end{table}
\section{Conclusions}

We proposed an HDG method to approximate the solution of an optimal Dirichlet boundary control problems for the Poisson equation.  We obtained a superlinear rate of convergence for the control in 2D under certain assumptions on the domain and the target state $ y_d $.  Numerical experiments confirmed our theoretical results.

Our results indicate HDG methods have potential for solving more complex Dirichlet boundary control problems.  We plan to investigate HDG methods for Dirichlet boundary control of other PDEs, including convection dominated diffusion problems and fluid flows.  These problems may involve solutions with large gradients or shocks, and it is natural to consider HDG methods for such problems.  


\section*{Acknowledgements} The authors thank Bernardo Cockburn for many valuable conversations.

\appendix
\section{Local Solver}

By simple algebraic operations in equation \eqref{system_equation2}, we obtain the following formulas for the matrices $ G_1 $, $ G_2 $, $ H_1 $, and $ H_2 $ in \eqref{local_solver}:
\begin{align*}
G_1 &= B_1^{-1}B_2(B_4+B_2^TB_1^{-1}B_2)^{-1}(B_5+B_2^TB_1^{-1}B_3)-B_1^{-1}B_3,\\
G_2 &= -(B_4+B_2^TB_1^{-1}B_2)^{-1}(B_5+B_2^TB_1^{-1}B_3),\\
H_1 &= -B_1^{-1}B_2(B_4+B_2^TB_1^{-1}B_2)^{-1},\\
H_2 &= (B_4+B_2^TB_1^{-1}B_2)^{-1}.
\end{align*}
In general, this process is impractical; however, for the HDG method described in this work, these matrices can be easily computed.  This is one of the advantages of the HDG method.  We briefly describe this process below.


Since the spaces $ \bm{V}_h $ and $ W_h $ consist of discontinuous polynomials, some of the system matrices are block diagonal and each block is small and symmetric positive definite.  Let us call a matrix of this form a SSPD block diagonal matrix.  The inverse of a SSPD block diagonal matrix is another SSPD block diagonal matrix, and the inverse can be easily constructed by computing the inverse of each small block.  Furthermore, the inverse of each small block can be computed independently; and therefore computing the inverse can be easily done in parallel.

It can be checked that $ B_1 $ is a SSPD block diagonal matrix, and therefore $ B_1^{-1} $ is easily computed and is also a SSPD block diagonal matrix.  Therefore, the the matrices $ G_1 $, $ G_2 $, $ H_1 $, and $ H_2 $ are easily computed if $ B_4 + B_2^T B_1^{-1} B_2 $ is also easily inverted.  We show below that this is the case.





First, it can be checked that $ B_2 $ is block diagonal with small blocks, but the blocks are not symmetric or definite.  This implies $B_2^T B_1^{-1} B_2$ is block diagonal with small nonnegative definite blocks.  Next, $B_4 = \begin{bmatrix}
A_5 & 0\\
-A_4 & A_5
\end{bmatrix}$, where $A_4$ and $A_5$ are both SSPD block diagonal.  Due to the structure of $ B_1 $ and $ B_2 $, the matrix $B_2^TB_1^{-1}B_2 + B_4$ has the form
$\begin{bmatrix}
C_1 & 0\\
-A_4 & C_2
\end{bmatrix},
$
where $ C_1 $ and $ C_2 $ are SSPD block diagonal.  The inverse can be easily computed using the formula
$$\begin{bmatrix}
C_1 & 0\\
-A_4 & C_2
\end{bmatrix}^{-1} = \begin{bmatrix}
C_1^{-1} & 0\\
C_2^{-1} A_4 C_1^{-1} & C_2^{-1}
\end{bmatrix}.
$$
Furthermore, $ C_1^{-1} $, $C_2^{-1}$ and $ C_2^{-1} A_4 C_1^{-1} $ are both SSPD block diagonal.



\bibliographystyle{plain}
\bibliography{yangwen_ref_papers,yangwen_ref_books}
\end{document}